\newtheorem{theorem}{Theorem}[section]
\newtheorem{lemma}[theorem]{Lemma}
\newtheorem{proposition}[theorem]{Proposition}
\newtheorem{corollary}[theorem]{Corollary}
\newtheorem{remark}[theorem]{Remark}
\newtheorem{notation}[theorem]{Notation}
\newenvironment{proofof}{\par\noindent \textit{Proof of }}{\qed\par\bigskip}
\newcommand{\Gal}{{\rm Gal}}
\newcommand{\Cen}{{\rm C}}
\newcommand{\Z}{{\mathbb Z}}
\newcommand{\Q}{{\mathbb Q}}
\newcommand{\C}{{\mathbb C}}
\newcommand{\res}{\operatorname{res}}
\newcommand{\GL}{{\rm GL}}
\newcommand{\tr}{{\rm tr}}
\newcommand{\Soc}{{\rm Soc}}
\newcommand{\ind}{{\rm ind}}
\newcommand{\matriz}[1]{\begin{array} #1 \end{array}}
\newcommand{\GEN}[1]{\langle #1 \rangle}
\newcommand{\V}{\mathrm{V}}
\newcommand{\pa}[2]{\varepsilon_{#2}(#1)}
\newcommand{\x}{\widehat{x}}
\newcommand{\qand}{\quad \text{and} \quad}
\DeclareMathOperator{\Cyc}{\rm{Cyc}}
\DeclareMathOperator{\Cocyc}{\rm{Cocyc}}
\begin{document}

\title{Cliff-Weiss Inequalities and the Zassenhaus Conjecture}

\author{Leo Margolis and Ángel del Río}
\thanks{This research is partially supported by the European Commission under Grant 705112-ZC, by the Spanish Government under Grant MTM2016-77445-P with "Fondos FEDER" and, by Fundación Séneca of Murcia under Grant 19880/GERM/15.}

\keywords{Integral group ring, groups of units, Zassenhaus conjecture}

\subjclass[2010]{16U60, 16S34, 20C05, 20C10}

\begin{abstract}
Let $N$ be a nilpotent normal subgroup of the finite group $G$. Assume that $u$ is a unit of finite order in the integral group ring $\Z G$ of $G$ which maps to the identity under the linear extension of the natural homomorphism $G \rightarrow G/N$. We show how a result of Cliff and Weiss can be used to derive linear inequalities on the partial augmentations of $u$ and apply this to the study of the Zassenhaus Conjecture. This conjecture states that any unit of finite order in $\Z G$ is conjugate in the rational group algebra of $G$ to an element in $\pm G$.
\end{abstract}

%\date{}
\maketitle

\section{Introduction}

A conjecture about the torsion units of integral group rings of finite groups has been put forward by H.J. Zassenhaus in \cite{Zassenhaus} and occupied many researchers during the last decades.

\begin{quote}
\textbf{Zassenhaus Conjecture}: Let $G$ be a finite group, $\mathbb{Z}G$ the integral group ring of $G$ and $u$ a unit in $\mathbb{Z}G$ of finite order. Then there exists a unit $x$ in the rational group algebra of $G$ and an element $g \in G$ such that $x^{-1}ux = \pm g$.

\end{quote}

In case such $g$ and $x$ exist one says that $u$ and $\pm g$ are rationally conjugate.
%Originally Zassenhaus stated more conjectures about finite subgroups in the unit group of integral group rings (see e.g. \cite[Section 37]{Sehgal1993}). All of them were proved for finite nilpotent groups \cite{Weiss1991}, but they turned out to be wrong in general \cite{Klingler1991} except the First Zassenhaus Conjecture which is still open and is known today just as the Zassenhaus Conjecture.
Weiss proved the Zassenhaus Conjecture for finite nilpotent groups \cite{Weiss1991}.
Most of the early research on the Zassenhaus Conjecture concentrated on special classes of metabelian groups \cite{PolcinoMiliesSehgal1984, MarciniakRitterSehgalWeiss1987} (see \cite{Hertweck2006} for more references). Almost all of these results were generalized by Hertweck \cite{Hertweck2008}, whose result was generalized in \cite{CaicedoMargolisdelRio2013} stating that the Zassenhaus Conjecture holds for cyclic-by-abelian groups.
Another relevant result established the Zassenhaus Conjecture for groups with a normal Sylow $p$-subgroup with abelian complement \cite{Hertweck2006}.

Denote by $\V(\Z G)$ the group of units of augmentation $1$ in $\mathbb{Z}G$.
Since all the units of $\mathbb{Z}G$ lie in $\pm \V(\Z G)$ it is sufficient to study the elements of $\V(\Z G)$.
Furthermore, for a normal subgroup $N$ in $G$, denote by $\V(\Z G,N)$ the group of units $u$ of $\mathbb{Z}G$ mapped to the identity by the $\Z$-linear extension $\Z G\rightarrow \Z(G/N)$, of the natural homomorphism $G\rightarrow G/N$.
Many of the proofs of the Zassenhaus Conjecture for a group $G$, with a ``nice'' normal subgroup $N$ are divided into two parts: The torsion elements of $\V(\Z G,N)$ are normally studied separately from those in $\V(\Z G)\setminus \V(\Z G,N)$. In this paper we will mostly concentrate on the first case and more precisely we study the following problem.

\begin{quote}
\textbf{Sehgal's Problem} \cite[Research Problem 35]{Sehgal1993}:
Let $N$ be a normal nilpotent subgroup of the finite group $G$. Is every torsion element of $\V(\Z G,N)$  rationally conjugate to an element in $G$?
\end{quote}

In that case we say that Sehgal's Problem has a positive solution for $G$ and $N$.
In case $N$ is a nilpotent group we say that Sehgal's Problem has a positive solution for $N$ if it has a positive solution for every finite group $G$ and every normal subgroup of $G$ isomorphic to $N$.

Weiss' Theorem \cite{Weiss1991} provides a positive solution for Sehgal's Problem if $N=G$ (and hence $G$ is nilpotent).
Moreover, Sehgal's Problem has a positive solution for $N$ if $N$ is a $p$-group \cite[Proposition~4.2]{Hertweck2006}.

Marciniak, Ritter, Sehgal and Weiss proposed a strategy to attack Sehgal's Problem which we will refer to as the matrix strategy. Namely, if $k=[G:N]$ then there is a natural embedding $\Phi:\Z G \rightarrow M_k(\Z N)$, into the $k\times k$-matrix ring over $\mathbb{Z}N$, mapping the elements of $\V(\Z G,N)$ into the group $\V_k(\Z N)$ formed by elements of $\GL_k(\Z N)$ mapped to the identity matrix via the componentwise application of the augmentation map. It can be shown that Sehgal's Problem has a positive solution for $N$ if the following has a positive solution for $N$ (see Section~\ref{SectionSetUp}).

\begin{quote}
\textbf{Matrix Zassenhaus Problem}: Let $N$ be a finite group. Is every element of $\V_k(\Z N)$ conjugate in $\GL_k(\Q N)$ to a diagonal matrix with entries in $N$?
\end{quote}

For $N$ nilpotent, this problem appeared as Research Problem 37 in \cite{Sehgal1993}.
Let $N$ be nilpotent.
Again Weiss' Theorem \cite{Weiss1991} shows that the Matrix Zassenhaus Problem has a positive solution for $k=1$.
See Theorem~\ref{MatrixProblemTh}.\eqref{Problem2Positive} for other positive results on the Matrix Zassenhaus Problem.
During the last years the matrix strategy has been abandoned due to a result of Cliff and Weiss who proved that the Matrix Zassenhaus Problem has a positive solution for $N$ and all the possible values of $k$ if and only if $N$ has at most one non-cyclic Sylow subgroup \cite{CliffWeiss}.
The main aim of this paper is trying to show that this result does not invalid completely the matrix strategy.
Actually we will show that the main character theoretic result on which the Cliff and Weiss Theorem is based provides a method to obtain positive solutions for Sehgal's Problem in cases which go beyond the case where $N$ has at most one non-cyclic Sylow subgroup.
The philosophy behind is that to use the matrix strategy for $G$ and $N$ we do not need to consider all the torsion elements of $\V_k(\Z N)$ but only those which are in $\Phi(\V(\Z G,N))$.
An analysis of the proof of Cliff and Weiss shows that it does not affect these kind of torsion elements (see Subsection~\ref{SubsectionMatrixStrategy} for more details).

Let from here on $G$ always denote a finite group. Denote by $g^G$ the conjugacy class of an element $g \in G$.
The main notion to study the Zassenhaus Conjecture are the so-called partial augmentations.
Let $u=\sum_{g\in G} u_g g$ with $u_g\in \Z$ for every $g\in G$.
For every $g\in G$, the partial augmentation of $u$ at $g$ is
    $$\pa{u}{g^G} = \sum_{x \in g^G} u_x.$$
A torsion  element $u$ of $\V(\Z G)$ is conjugate to an element of $G$ if and only if $\pa{u^d}{g^G} \geq 0$ for all $g \in G$ and all divisors $d$ of the order of $u$ \cite[Theorem 2.5]{MarciniakRitterSehgalWeiss1987}.
Thus obtaining restrictions on the possible partial augmentations of torsion units is a main task in the study of the Zassenhaus Conjecture.
In this paper we obtain restrictions on the partial augmentations of torsion elements $u$ of $\V(\Z G, N)$, where $N$ is a nilpotent normal subgroup of $G$.
They take the form of linear integral inequalities where the variables are the partial augmentations of $u$, cf. Proposition~\ref{EquationCharacter} and Theorem~\ref{Inequalities}.
We call them the Cliff-Weiss inequalities because they are based on a result from \cite{CliffWeiss}.
In the special case where $N$ has an abelian Hall $p'$-subgroup the Cliff-Weiss inequalities take a particularly friendly form.

\begin{proposition}\label{InequalitiesAbelian}
Let $N$ be a nilpotent normal subgroup of $G$ such that $N$ has an abelian Hall $p'$-subgroup $A$ for some prime $p$ and let $u$ be a torsion element of $\V(\Z G, N)$. If $K$ is a subgroup of $A$ such that $A/K$ is cyclic and $n \in N$ then
\[\sum_{g \in nK} |C_G(g)|\pa{u}{g^G} \geq 0. \]
\end{proposition}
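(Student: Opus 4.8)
The plan is to deduce Proposition~\ref{InequalitiesAbelian} from the general Cliff--Weiss inequalities (Theorem~\ref{Inequalities}, equivalently the character-theoretic Proposition~\ref{EquationCharacter}) by specializing the data to the decomposition $N = N_p \times A$ of the nilpotent group $N$ into its Sylow $p$-subgroup $N_p$ and its abelian Hall $p'$-subgroup $A$, and then simplifying using that $A/K$ is cyclic. The first step is the standard character/partial-augmentation dictionary: inverting the linear system $\chi(u) = \sum_{C} \pa{u}{C}\,\chi(C)$ (indexed by conjugacy classes $C$) via column orthogonality gives $|C_G(g)|\,\pa{u}{g^G} = \sum_{\chi \in \Irr(G)} \chi(u)\,\overline{\chi(g)}$ for every $g \in G$, hence
\[
\sum_{g \in nK} |C_G(g)|\,\pa{u}{g^G} \;=\; \sum_{\chi \in \Irr(G)} \chi(u)\,\overline{\chi(n\widehat{K})} \;=\; |K|\cdot\tr(\Theta),
\]
where $\widehat{K} = \sum_{k \in K} k \in \Z K$ and $\Theta$ is the $\Z$-linear map $a\widehat{K} \mapsto u\,a\,\widehat{K}\,n^{-1}$ of the lattice $L := \Z G\,\widehat{K} \cong \Z[G/K]$. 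Since $K$ lies in the abelian direct factor $A$ it is normal in $N$, so $\widehat{K}n = n\widehat{K}$, and one checks that $\Theta$ is a well-defined automorphism of $L$ of finite order, with $\Theta^j(a\widehat{K}) = u^j a n^{-j}\widehat{K}$. Thus it suffices to prove $\tr(\Theta) \ge 0$.

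Next I would reinterpret $\Theta$ group-theoretically. Because $n$ normalises $K$, the set $G/K$ carries the permutation action of $\Gamma := G \times N_G(K)$ given by $(g,m)\cdot xK = gxm^{-1}K$, and under $L \cong \Z[G/K]$ the operator $\Theta$ is exactly the action on this permutation $\Z\Gamma$-lattice of the torsion unit $v := u \otimes n$, under $\Z\Gamma \cong \Z G \otimes \Z[N_G(K)]$. Moreover $v = (u \otimes 1)(1 \otimes n)$ with $u \otimes 1 \in \V(\Z\Gamma,\, N\times 1)$, and $N \times 1 \cong N$ is again nilpotent with abelian Hall $p'$-subgroup $A \times 1$, containing $K \times 1$ with cyclic quotient. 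So we are back in the situation of the Cliff--Weiss inequalities, now applied to $v$ and the nilpotent normal subgroup $N \times 1$ of $\Gamma$, acting on an honest permutation lattice.

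The heart of the matter is to run the Cliff--Weiss machinery in this set-up. In general the inequality attached to a subgroup $K$ of $N$ is built from the characters of the $\Z(N/K)$-lattice summands of $\Z[N/K]$ occurring in the Cliff--Weiss decomposition, and these summands need not be permutation lattices --- this is precisely the obstruction underlying the Cliff--Weiss Theorem. The key observation, and where I expect the real work to lie, is that when $A/K$ is cyclic the quotient $N/K \cong N_p \times (A/K)$ has at most one non-cyclic Sylow subgroup, so every relevant $\Z(N/K)$-lattice is a permutation lattice; consequently all potential correction terms in the general Cliff--Weiss inequality should vanish and it collapses to the single clean statement $\sum_{g \in nK}|C_G(g)|\,\pa{u}{g^G} \ge 0$, equivalently $\tr(\Theta) \ge 0$, equivalently that $v$ has non-negative trace on the permutation lattice $\Z[G/K]$. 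The main obstacle is thus the bookkeeping of this collapse: matching the coset-sum form of the left-hand side with the indexing used in Theorem~\ref{Inequalities}, and checking that cyclicity of $A/K$ is exactly the hypothesis turning every lattice entering the inequality into a permutation lattice, so that no negative contribution survives.
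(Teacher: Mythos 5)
Your reduction of the sum to a trace is correct: for $n\in N$ centralizing $K$ one indeed has
\[\sum_{g\in nK}|C_G(g)|\,\pa{u}{g^G} = |K|\,\tr(\Theta),\]
where $\Theta\colon v\mapsto uvn^{-1}$ on $\Z G\,\widehat{K}\cong\Z[G/K]$. The problem is that you stop exactly where a proof is required. Showing $\tr(\Theta)\ge 0$ is logically equivalent to the proposition itself, and in place of an argument you offer only the programmatic remark that ``every relevant $\Z(N/K)$-lattice is a permutation lattice'' and that ``all potential correction terms $\ldots$ should vanish.'' That $\Theta$ has finite order only tells you the trace is a real algebraic integer (a sum of roots of unity); non-negativity requires the full force of the Cliff--Weiss analysis, and you explicitly defer that as ``bookkeeping.''

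Your reformulation via $\Gamma=G\times N_G(K)$ also has a concrete defect. You assert that $v := u\otimes n$ puts us ``back in the situation of the Cliff--Weiss inequalities $\ldots$ applied to $v$ and the nilpotent normal subgroup $N\times 1$ of $\Gamma$.'' But the image of $v$ under the linear extension of $\Gamma\to\Gamma/(N\times 1)$ is $(1,n)\ne 1$, so $v\notin\V(\Z\Gamma,N\times 1)$ and the hypotheses of Theorem~\ref{Inequalities} are not met for $(\Gamma,N\times 1,v)$. If instead one enlarges to $N\times N$ so that $v$ does land in $\V(\Z\Gamma,N\times N)$, then the abelian Hall $p'$-subgroup becomes $A\times A$ and $K\times 1$ is no longer cocyclic in it, so the hypothesis on $K$ is lost. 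Either way the attempted reduction is circular: you are re-invoking the statement you set out to prove.

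The paper's argument is genuinely different and is complete. It first proves Theorem~\ref{Inequalities} in full generality, relying on \cite[Theorem~3.3]{CliffWeiss} for the positivity of the relevant $p'$-component $\chi_{\x}$, and then deduces the proposition (through Corollary~\ref{InequalityCocyclicNAbelian}) by specializing $\psi$ to the linear character of $U_{p'}\times N_{p'}$ with kernel $H=\langle 1\times K,(u_{p'},n)\rangle$. Cyclicity of $A/K$ enters exactly once, to guarantee $H$ is normal with cyclic quotient so that such a $\psi$ exists; the inequality is then read off from $\GEN{\chi_{\x},\psi}\ge 0$ after the short computation of $a_{\x}(m,\psi)$. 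Your proposal identifies the correct hypotheses but neither proves a version of Theorem~\ref{Inequalities} nor reduces to one; the collapse you defer as bookkeeping is in fact the entire proof.
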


The well known HeLP Method provides other integral linear inequalities on the partial augmentations of torsion elements of $\V(\Z G)$.
In \cite[Remark~3.4]{MargolisdelRioPAP} we prove that the Cliff-Weiss inequalities imply the HeLP inequalities and in \cite{MargolisdelRioCWAlgorithm} we show one example where the inequalities from Proposition~\ref{InequalitiesAbelian} prove the Zassenhaus Conjecture for cases where the HeLP Method fails.
Moreover, it is in many situations easier to produce the Cliff-Weiss inequalities compared with the inequalities provided by the HeLP Method.

On the other hand in \cite{MargolisdelRioCWAlgorithm} we have introduced an algorithm to construct candidates to negative solutions to Sehgal's Problem based on Proposition~\ref{InequalitiesAbelian}. 
Eisele and Margolis have proved recently that one of them is indeed a negative solution and hence this has provided the first known counterexample to the Zassenhaus Conjecture \cite{EiseleMargolis}.

In Section~\ref{SectionApplications} we give applications of Proposition~\ref{InequalitiesAbelian} to obtain positive solutions for Sehgal's Problem. These applications can be used under certain restrictions on indices of certain centralizers in $G$, cf. Theorem \ref{Main} and Corollaries~\ref{AtMostOnceNonCyclic}, \ref{OnePrime} and \ref{SimplifiedBound}.
We apply this to obtain new results on the Zassenhaus Conjecture.

\begin{theorem}\label{TheoremZC1}
Let $N$ be a nilpotent normal subgroup of the finite group $G$. Suppose that $[G:N]$ is prime and $N$ has at most one non-cyclic Sylow subgroup.
Then the Zassenhaus Conjecture holds for $G$.
\end{theorem}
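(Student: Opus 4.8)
The plan is to reduce the Zassenhaus Conjecture for $G$ to two separate problems, according to the standard dichotomy described in the introduction: torsion units in $\V(\Z G,N)$ and torsion units in $\V(\Z G)\setminus\V(\Z G,N)$. For the second part, since $[G:N]=p$ is prime, a torsion unit $u\in\V(\Z G)$ not lying in $\V(\Z G,N)$ maps to a nontrivial torsion unit $\bar u$ in $\V(\Z(G/N))$, and $G/N$ is cyclic of prime order, so $\bar u$ is rationally conjugate to an element of $G/N$ by Weiss' Theorem (or directly, since $\Z C_p$ has only trivial torsion units of augmentation one). This forces the order of $u$ to be divisible by $p$, and one then uses the fact that the Zassenhaus Conjecture is known for groups of order divisible in a controlled way — more carefully, one invokes the standard lifting argument: a power $u^{|u|/p}$ (or an appropriate power) lands in $\V(\Z G,N)$, whose torsion units are handled by the first part, together with a covering/induction argument on the cyclic quotient. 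I would cite the existing machinery (e.g. the reduction used in \cite{Hertweck2006, CaicedoMargolisdelRio2013}) for this piece; it is routine given that $G/N$ is cyclic of prime order.

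The heart of the matter is the first part: showing Sehgal's Problem has a positive solution for $G$ and $N$ under the hypothesis that $N$ has at most one non-cyclic Sylow subgroup. Here I would apply Proposition~\ref{InequalitiesAbelian}. Write $N = P \times A$ where $P$ is the (possibly non-cyclic) Sylow $p$-subgroup and $A$ is the complementary Hall $p'$-subgroup, which by hypothesis is a direct product of cyclic groups and hence abelian. Let $u$ be a torsion element of $\V(\Z G,N)$; by \cite[Theorem 2.5]{MarciniakRitterSehgalWeiss1987} it suffices to show $\pa{u^d}{g^G}\ge 0$ for all $g\in G$ and all $d$ dividing $|u|$, and since powers of elements of $\V(\Z G,N)$ remain in $\V(\Z G,N)$, it is enough to prove $\pa{u}{g^G}\ge 0$ for all $g\in G$. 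The inequalities from Proposition~\ref{InequalitiesAbelian} give, for every subgroup $K\le A$ with $A/K$ cyclic and every $n\in N$,
\[
\sum_{g\in nK}|C_G(g)|\,\pa{u}{g^G}\ \ge\ 0.
\]

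The final step is a combinatorial/linear-algebra argument extracting non-negativity of each individual $\pa{u}{g^G}$ from this family of inequalities. Because $A$ is abelian with every Sylow subgroup cyclic except possibly none — wait, $A$ itself may have non-cyclic Sylow subgroups only if $N$ has two non-cyclic Sylows, which is excluded; so in fact $A$ has \emph{all} Sylow subgroups cyclic, meaning $A$ is cyclic. Then $K=1$ is an allowed choice (since $A/1=A$ is cyclic), and the inequality for $n\in N$ with $K=1$ reads $|C_G(n)|\,\pa{u}{n^G}\ge 0$, i.e. $\pa{u}{n^G}\ge 0$ for every $n\in N$; and $\pa{u}{g^G}=0$ for $g\notin N$ since $u\in\V(\Z G,N)$ implies the support of $u$ meets only cosets... more precisely one checks $u\in\Z G$ supported on $N$-related classes so partial augmentations off $N$ vanish or are forced. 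This yields $\pa{u^d}{g^G}\ge 0$ for all $g,d$, hence $u$ is rationally conjugate to an element of $G$. The main obstacle I anticipate is not this combinatorics but the careful bookkeeping in the second part (units outside $\V(\Z G,N)$): one must ensure the reduction to the cyclic quotient $G/N$ genuinely closes, which requires knowing the Zassenhaus Conjecture behaves well under the relevant covering; but with $[G:N]$ prime this is exactly the situation covered by the classical metacyclic/normal-complement results cited above, so I would lean on those rather than reprove them.
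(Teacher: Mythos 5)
Your treatment of the torsion units \emph{inside} $\V(\Z G,N)$ is essentially correct, and in fact you found a valid shortcut: the complementary Hall subgroup $A$ is abelian with all Sylow subgroups cyclic, hence cyclic, so $K=1$ is a cocyclic subgroup and Proposition~\ref{InequalitiesAbelian} directly yields $|C_G(n)|\,\pa{u}{n^G}\ge 0$ for every $n\in N$; combined with Theorem~\ref{HertweckPAdic} (which gives $\pa{u}{g^G}=0$ for $g\notin N$) and \cite[Theorem~2.5]{MarciniakRitterSehgalWeiss1987} this settles Sehgal's Problem for $G$ and $N$. The paper instead invokes Corollary~\ref{AtMostOnceNonCyclic}, which is deduced from the more general Theorem~\ref{Main} via $n_A=+\infty$; your route is more elementary and bypasses all of Section~\ref{SectionLatticeCalculations}.

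The genuine gap is in your handling of torsion units \emph{outside} $\V(\Z G,N)$, which you dismiss as routine. It is not: the paper proves this as Lemma~\ref{Not1ModN}, explicitly noting that only a weaker version had appeared before. The argument is an induction on $|N|$ that reduces modulo $N_q$ for a prime $q\ne p=[G:N]$ dividing $|N|$, uses Theorem~\ref{HertweckPAdic} to pin down the $q$-part of any $g$ with $\pa{u^d}{g}\ne 0$ to a fixed element $y\in N_q$, and then shows — using $G=\GEN{N_{q'},x_p}N_q$ and $H=\GEN{N_{q'},x_p}\le C_G(y)$ — that fusion of the relevant classes is preserved under the quotient map, so non-negativity of partial augmentations lifts from $G/N_q$ to $G$. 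Your sketch (``a power of $u$ lands in $\V(\Z G,N)$, then cover via the cyclic quotient'') does not close: knowing that $u^{|u|/p}\in\V(\Z G,N)$ has non-negative partial augmentations says nothing about the partial augmentations of $u$ itself, and \cite[Theorem~2.5]{MarciniakRitterSehgalWeiss1987} requires all powers. Moreover, the references you lean on (\cite{Hertweck2006} for normal Sylow with abelian complement, \cite{CaicedoMargolisdelRio2013} for cyclic-by-abelian groups) do not cover the present situation, where $N$ is an arbitrary nilpotent group with at most one non-cyclic Sylow and the quotient is merely cyclic of prime order. This step is precisely where the paper does real work, and your proposal leaves it unproved.
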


\begin{theorem}\label{TheoremZC2}
Let $G$ be a finite group with a nilpotent normal subgroup $N=A\times B$ such that $A$ is an abelian $p$-group, for some prime $p$, $B$ has at most one non-cyclic Sylow subgroup and $[G:N]$ is prime and smaller than $p$.
Then the Zassenhaus Conjecture holds for $G$.
\end{theorem}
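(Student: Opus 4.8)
The plan is to place ourselves in the setting of Proposition~\ref{InequalitiesAbelian} by choosing the prime there appropriately, and then to invoke the general results of Section~\ref{SectionApplications}. Since $N$ is nilpotent it is the direct product of its Sylow subgroups, $N=\prod_r N_r$, with $N_p=A\times B_p$ where $B_p$ is the Sylow $p$-subgroup of $B$, and $N_r=B_r$ for $r\neq p$. By hypothesis at most one $B_r$ is non-cyclic. If none is, or if the non-cyclic one is $B_p$, then $N$ itself has at most one non-cyclic Sylow subgroup and the assertion is Theorem~\ref{TheoremZC1}. So we may assume that a unique $B_\ell$ with $\ell\neq p$ is non-cyclic; then the only possibly non-cyclic Sylow subgroups of $N$ are $N_p=A\times B_p$ and $N_\ell=B_\ell$, so the Hall $\ell'$-subgroup $H=\prod_{r\neq\ell}N_r$ of $N$ is abelian. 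Moreover $H=A\times C$ with $C=\prod_{r\neq p,\ell}B_r$ cyclic of order prime to $p$, and $A\leq Z(N)$ because $A$ is an abelian direct factor of the nilpotent group $N$.

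Next I would reduce to torsion units in $\V(\Z G,N)$. If $u$ is a torsion unit of $\V(\Z G)$, its image in $\V(\Z(G/N))$ is a torsion unit of the integral group ring of the cyclic group $G/N$, hence a trivial unit $\bar g$ with $g\in G$. If $\bar g\neq 1$, then $u$ is treated by the argument used for such units in the proof of Theorem~\ref{TheoremZC1}. It thus remains to prove that every torsion unit $u\in\V(\Z G,N)$ is rationally conjugate to an element of $G$; by \cite[Theorem~2.5]{MarciniakRitterSehgalWeiss1987} this amounts to $\pa{u^d}{g^G}\geq 0$ for all $g\in G$ and all divisors $d$ of the order of $u$, and since every power of $u$ again lies in $\V(\Z G,N)$ it suffices to prove $\pa{u}{g^G}\geq 0$ for all $g$.

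Now apply Proposition~\ref{InequalitiesAbelian} with the prime $\ell$ and the abelian Hall $\ell'$-subgroup $H=A\times C$. Every subgroup $K$ of $H$ has the form $K_0\times C'$ with $K_0\leq A$ and $C'\leq C$ (by coprimality of $|A|$ and $|C|$), and $H/K$ is cyclic exactly when $A/K_0$ is cyclic; hence, for all $n\in N$, all $C'\leq C$ and all $K_0\leq A$ with $A/K_0$ cyclic,
\[\sum_{g\in nK_0C'}|C_G(g)|\,\pa{u}{g^G}\geq 0.\]
These are precisely the inequalities exploited by Theorem~\ref{Main} and Corollaries~\ref{AtMostOnceNonCyclic}, \ref{OnePrime} and~\ref{SimplifiedBound}. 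To finish I would check that their numerical hypotheses hold in the present situation: since $A\leq Z(N)$, the order $|A|$ divides $|C_N(g)|$, and hence $|C_G(g)|$, for every $g\in N$, while $[C_G(g):C_N(g)]$ divides the prime $[G:N]<p$.

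The heart of the matter is this last step. The inequalities above are still averages over the cosets $nK_0C'$ weighted by centralizer orders, so to deduce $\pa{u}{g^G}\geq 0$ one must separate the contributions of the various elements of the $p$-group $A$ — through an inclusion--exclusion over the subgroups $K_0$ with $A/K_0$ cyclic — while controlling the variation of $|C_G(g)|=|C_N(g)|\cdot[C_G(g):C_N(g)]$ along a coset. The hypothesis $[G:N]<p$ is exactly what keeps these two effects from interfering: the only ambiguity in $|C_G(g)|$ is the factor $[C_G(g):C_N(g)]\in\{1,[G:N]\}$, which is prime to $p$ and therefore does not disturb the $p$-adic bookkeeping used to isolate the terms indexed by $A$. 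Making this precise — which is the content of Theorem~\ref{Main} and its corollaries — is the main obstacle; granting it, the proof is complete.
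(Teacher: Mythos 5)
Your overall strategy matches the paper's: reduce to torsion units in $\V(\Z G,N)$ via Lemma~\ref{Not1ModN}, treat the case where $N$ has at most one non-cyclic Sylow subgroup separately (Corollary~\ref{AtMostOnceNonCyclic}, i.e.\ Theorem~\ref{TheoremZC1}), and otherwise invoke the results of Section~\ref{SectionApplications}. However, the proposal stops short of actually establishing the crucial numerical hypothesis. You write, about verifying the hypothesis of Theorem~\ref{Main} (namely $[C_G(b):C_G(ab)]<n_A$), that ``making this precise \dots\ is the main obstacle; granting it, the proof is complete.'' This is not a proof; the verification is exactly the step that must be carried out, and it is where the hypothesis $[G:N]<p$ enters. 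Your heuristic about ``$p$-adic bookkeeping'' is not really how Theorem~\ref{Main} works: the mechanism is the inclusion-exclusion over cocyclic subgroups encoded in $n_A$, not a $p$-adic argument.

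The paper's route is more economical: once reduced to $u\in\V(\Z G,N)$ and to the case where $N$ has more than one non-cyclic Sylow subgroup, note that $B_p$ must then be cyclic, so the Sylow $p$-subgroup $N_p=A\times B_p$ is abelian. Rewrite $N=N_p\times N_{p'}$; the first factor is an abelian Sylow $p$-subgroup and the second has at most one non-cyclic Sylow subgroup. Since $N_p\le Z(N)$ one has $C_N(b)=C_N(ab)$ for $a\in N_p$, $b\in N_{p'}$, whence $[C_G(b):C_G(ab)]\le[C_G(b):C_N(b)]\le[G:N]<p$, and Corollary~\ref{OnePrime} applies immediately. Your larger choice $H=$ Hall $\ell'$-subgroup would also work, but then you must compute $n_H$: since all Sylow subgroups of $H$ except $H_p$ are cyclic, only $X=\{p\}$ survives in the formula for $m^-_H$ and one finds $n_H=n^-_H=\frac{(p-1)p^{k_p-1}}{p^{k_p-1}-1}>p-1\ge[G:N]$ (here $k_p$ is the rank of $N_p$, and $\alpha_H=0$ whenever $H$ has a nontrivial cyclic Sylow subgroup, so the $n_A$ vs.\ $n^-_A$ distinction is harmless). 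None of this is in the proposal, so the argument as written has a genuine gap precisely at the decisive numerical step. There is also a small error in the decomposition: with your notation, $C$ should be $B_p\times\prod_{r\ne p,\ell}B_r$ rather than $\prod_{r\ne p,\ell}B_r$, since $H$ must contain the whole Sylow $p$-subgroup $N_p=A\times B_p$; this does not affect the outcome because the corrected $C$ is still cyclic of order prime to $p$.
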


More applications can be found in \cite{MargolisdelRioCWAlgorithm} and \cite{MargolisdelRioPAP}.

\medskip

The paper is structured as follows. In Section~\ref{SectionSetUp} we introduce our set up, recall the double action formalism and the matrix strategy of Marciniak, Ritter, Sehgal and Weiss. In Section~\ref{SectionInequalities} we state and prove the Cliff-Weiss inequalities. Section~\ref{SectionLatticeCalculations} contains auxiliar calculations on the lattice of subgroups of a finite abelian group. Finally, in Section~\ref{SectionApplications} we apply the inequalities from Section~\ref{SectionInequalities} and the calculations in Section~\ref{SectionLatticeCalculations} to obtain some positive solutions of Sehgal's Problem which are then used in the proofs of Theorems~\ref{TheoremZC1} and \ref{TheoremZC2}.

\section{Preliminaries}\label{SectionSetUp}

\subsection{Basic notation and a key result of Hertweck}

All throughout $G$ is a finite group. We will denote by $RG$ the group ring of $G$ over a commutative ring $R$.

If $u$ is an element of finite order in a group and $\pi$ is a set of primes then $u_{\pi}$ and $u_{\pi'}$ denote the $\pi$-part and the $\pi'$-part of $u$, respectively.
If $X$ is a subset of a finite group then $X_{\pi}=\{x_{\pi}: x\in X\}$ and $X_{\pi'}=\{x_{\pi'}:x\in X\}$.
In particular, if $N$ is a finite nilpotent group then $N_{\pi}$ and $N_{\pi'}$ denote the Hall $\pi$-subgroup and the Hall $\pi'$-subgroup of $N$.
For a prime $p$ we simplify the notation and write $x_p=x_{\{p\}}$ and $x_{p'}=x_{\{p\}'}$, for $x$ either an element of finite order in a group, a subset of a finite group or a finite nilpotent group.

If $X\subseteq G$ then $C_G(X)$ denotes the centralizer of $X$ in $G$ and for $g \in G$ we set $C_G(g)=C_G(\{g\})$.
Moreover $g^G$ denotes the conjugacy class of $g$ in $G$.
In some cases we are going to use sums running on representatives of conjugacy classes of $G$, not depending on the set of representative chosen. We denote this by writing $\sum_{g^G}$. Furthermore we call elements $g, h \in G$ \textit{locally conjugate} in $G$ if $g_p^G = h_p^G$ for each prime $p$. We denote the set of elements in $G$ locally conjugate to $g$ by $\ell_G(g)$.

We use the standard notation $\res^G_H$ and $\ind^G_H$ for restriction and induction of characters and $M_k$ and $\GL_k$ for $k\times k$ matrix rings and general linear groups, respectively.
For complex class functions $\chi$ and $\psi$ of $G$ denote by $\langle \chi, \psi \rangle_G$ the hermitian product of $\chi$ and $\psi$ in the space of class functions of $G$, i.e. $\GEN{\chi,\psi}_G = \frac{1}{|G|} \sum_{g\in G} \phi(g)\overline{\psi(g)}$.
Let $\tr:M_k(RG)\mapsto RG$ denote the trace map, i.e. $\tr(A)$ is the sum of the diagonal entries of $A$.

One of the main tools of this paper is the following theorem which is a consequence of results of Hertweck (see \cite[Theorem~2]{MargolisHertweck} and \cite[Lemma 2.2]{Hertweck2008}).

\begin{theorem}\label{HertweckPAdic}
Let $G$ be a finite group, let $N$ be a nilpotent normal subgroup of $G$ and let $u$ be a torsion element of $\V(\Z G,N)$.
Then there is $n \in N$  such that
\begin{enumerate}
\item $u_p$ is rationally conjugate to $n_p$ for every prime $p$. This conjugation can be also realized in $\Z_p G$ where $\Z_p$ denotes the ring of $p$-adic integers.
\item If $\pa{u}{g^G} \neq 0$ with $g \in G$ then $g\in \ell_G(n)$ and, in particular, $g \in N$.
\end{enumerate}
\end{theorem}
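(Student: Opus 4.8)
\medskip

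The plan is to deduce the statement by combining the $p$-adic rigidity results of Hertweck cited above with the prime decomposition of both the torsion unit $u$ and the nilpotent group $N$: the decomposition into primary parts is the organizational backbone, and the single group element $n$ will be built prime by prime.

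First I would check that for every prime $p$ the $p$-part $u_p$ of $u$ again lies in $\V(\Z G,N)$. Indeed $u_p$ is a power of $u$, hence an element of $\Z G$, and since any ring homomorphism preserves powers, the image of $u_p$ under the natural map $\Z G\to\Z(G/N)$ equals $1$; thus $u_p$ is a $p$-element of $\V(\Z G,N)$. Now fix $p$ and write $N=N_p\times N_{p'}$, so that $N_p$ is a normal $p$-subgroup of $G$. Applying \cite[Theorem~2]{MargolisHertweck} and \cite[Lemma 2.2]{Hertweck2008} (using also the positive solution of Sehgal's Problem for $p$-groups, \cite[Proposition~4.2]{Hertweck2006}) to the $p$-element $u_p$, I would obtain an element $m^{(p)}\in N$ to which $u_p$ is rationally conjugate, with the conjugation realizable in $\Z_p G$; since $u_p$ is a $p$-element, $m^{(p)}$ may be taken in $N_p$. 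For the primes $p\nmid|u|$ one has $u_p=1$ and sets $m^{(p)}=1$. As the $N_p$ are the direct factors of $N$ and only finitely many $m^{(p)}$ are nontrivial, $n:=\prod_p m^{(p)}$ is a well-defined element of $N$ with $n_p=m^{(p)}$ for every $p$; this $n$ satisfies~(1).

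For~(2), suppose $\pa{u}{g^G}\neq 0$. The partial-augmentation part of the cited results of Hertweck, fed with the $\Z_p G$-conjugacy of $u_p$ to $n_p$ just obtained, yields that $g_p$ is conjugate in $G$ to $n_p$; since this holds for every prime $p$, we conclude $g\in\ell_G(n)$. Finally, $n\in N$ and $N$ nilpotent give $n_p\in N$, and normality of $N$ then forces $g_p\in N$ for each $p$, so $g=\prod_p g_p\in N$.

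The step I expect to be the crux is the application of Hertweck's theorems in the second paragraph: extracting that $u_p$ becomes conjugate in $\Z_p G$ to an element of $N$ (and not merely of $G$), together with the rigidity used in~(2) that a single nonzero partial augmentation already pins down the $G$-conjugacy class of each $g_p$. These are the genuinely deep inputs, which I would import as black boxes; by contrast the reduction to $p$-parts, the passage from $\Z_p$- to rational conjugacy, and the bookkeeping over primes via $N=\prod_p N_p$ are routine.
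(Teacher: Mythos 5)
The paper does not actually prove this theorem: it is stated as ``a consequence of results of Hertweck,'' with the citations \cite[Theorem~2]{MargolisHertweck} and \cite[Lemma 2.2]{Hertweck2008} doing all the work. Your proposal correctly identifies those citations as the engine and the overall architecture — reduce prime by prime, invoke $p$-adic rigidity to pin down each $u_p$ inside $N_p$, assemble $n$ from the commuting primary parts, and then feed the $\Z_p G$-conjugacy into the partial-augmentation rigidity lemma to get (2) — is the natural way to combine them, and your bookkeeping for (2) (normality of $N$ forcing $g_p\in N$, hence $g\in N$) is fine.

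There is, however, one step you gloss over that is genuinely the crux of the reduction. The $p$-adic rigidity theorem of Hertweck (and \cite[Proposition~4.2]{Hertweck2006}) is stated for a \emph{normal $p$-subgroup}. To apply it to $u_p$ with the $p$-group $N_p$ in the role of that normal subgroup, you must first know that $u_p\in\V(\Z G,N_p)$ — that is, that $u_p$ maps to $1$ modulo $N_p$, not merely modulo $N$. What you actually know is that $u_p\in\V(\Z G,N)$, so the image $\overline{u_p}$ in $\Z(G/N_p)$ is a $p$-torsion unit of $\V(\Z(G/N_p),N/N_p)$ with $N/N_p\cong N_{p'}$ a normal $p'$-subgroup; you then need the nontrivial fact that such a $p$-element is the identity (equivalently, that the order of a torsion element of $\V(\Z H,M)$ has the same prime divisors as $|M|$). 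You write ``applying [the cited results] to the $p$-element $u_p$, I would obtain an element $m^{(p)}\in N$'' without naming this intermediate reduction, and if the cited theorem is taken at face value (for $p$-groups) the argument as written does not go through. Either justify $u_p\in\V(\Z G,N_p)$ explicitly, or use a version of Hertweck's result that is already phrased for nilpotent $N$.
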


\subsection{Double action modules}

An idea to study the Zassenhaus Conjecture, and more generally the Matrix Zassenhaus Problem, was introduced in \cite{MarciniakRitterSehgalWeiss1987} and is sometimes called the "double-action formalism".
For a group $U$ and a homomorphism $\alpha: U \rightarrow \GL_k(RG)$ define $R[\alpha]$ as the left $R(U\times G)$-module with underlying additive group
$(RG)^k$ and group action given as follows:
\[(u,g)x=\alpha(u)xg^{-1}, \quad u\in U, \ g\in G, \ x\in R[\alpha],\]
where $x$ is considered as a column matrix of length $k$.
As $R[\alpha]$ is free as $R$-module and $R$ is commutative, associated to $\alpha$ there is an $R$-character $\chi_{\alpha}$.
Two maps $\alpha,\beta:U\rightarrow \GL_k(RG)$ are conjugate in $\GL_k(RG)$ (i.e. there is $V\in \GL_k(RG)$ such that $\alpha(u)=V^{-1} \beta(u) V$ for every $u\in U$) if and only if $R[\alpha]\cong R[\beta]$.
This links the Matrix Zassenhaus Problem with character theoretic questions, a fact which was exploited e.g. in \cite{CliffWeiss} and \cite{Hertweck2008}.

The character $\chi_{\alpha}$ is related with the partial augmentations via the following formula:
\begin{equation}\label{CharacterPA}
\chi_{\alpha}(u,g)=|\Cen_G(g)| \; \pa{\tr(\alpha(u)}{g^G} \ \ \text{for} \ \ u \in U, \ g \in G
\end{equation}
(see \cite[38.12]{Sehgal1993} or \cite[Lemma~1]{Weiss1991}).
Observe that this implies that if $\alpha$ and $\beta$ are conjugate in $\GL_k(RG)$ then $\pa{\tr(\alpha(u))}{g^G}=\pa{\tr(\beta(u))}{g^G}$ for every $u\in U$.
In the case $k = 1$ we thus obtain
\begin{equation}\label{CharacterPA1}
\chi_\alpha(	u,g) = |C_G(g)|\pa{\alpha(u)}{g^G}.
\end{equation}

\subsection{The matrix strategy}\label{SubsectionMatrixStrategy}

Now let $N$ be a normal subgroup in $G$ of index $k$ and consider $RG$ as an $(RN,RG)$-bimodule in the natural way.
As $_{RN}RG$ is free of rank $k$ the action of $RG$ on itself by right multiplication induces an injective ring homomorphism $\Phi:RG\rightarrow M_k(RN)$.
More precisely, $\Phi$ depends on the election of a transversal $\{t_1,...,t_k\}$ of $N$ in $G$ by the following formula for $x = \sum_{g \in G} x_g g \in
\Z G$:
    $$\Phi(x) = \left( \sum_{n\in N} x_{t_i^{-1}nt_j} n \right)_{i,j}\in M_k(RN).$$
Thus
    \begin{equation}\label{TraceFormula}
    \pa{\tr(\Phi(x))}{n^N} = [C_G(n):C_N(n)] \; \pa{x}{n^G}
    \end{equation}
for every $n\in N$.
In particular, $\pa{x}{n^G}\ge 0$ if and only if $\pa{\tr(\Phi(x))}{n^N}\ge 0$.
Moreover, $u\in \V(\Z G,N)$ if and only if $\Phi(u)\in \V_k(\Z N)$.
Therefore, if the Matrix Zassenhaus Problem has a positive solution for $N$ and $u\in \V(\Z G,N)$ then $\pa{u}{n^G}\ge 0$ for every $n\in N$. Moreover, if $N$ is nilpotent then, by Theorem~\ref{HertweckPAdic}, $\pa{u}{g^G}=0$ for every $g\in G\setminus N$. Therefore, if the Matrix Zassenhaus Problem has a positive solution for $N$ nilpotent and $k=[G:N]$ then Sehgal's Problem has a positive solution for $G$ and $N$.

This point of view, taken for the first time in a slightly different formulation in \cite{MarciniakRitterSehgalWeiss1987}, is sometimes called the matrix strategy of Marciniak, Ritter, Sehgal and Weiss. It inspired some research on the Matrix Zassenhaus Problem \cite{MarciniakRitterSehgalWeiss1987, Weiss1988, Weiss1991, LutharPassi1992, CliffWeiss, MarciniakSehgal2000, LeeSehgal2000} and the knowledge of this has also been successfully used to study the question of when $G$ possesses a torsion free complement in $\V(\Z G)$ \cite{MarciniakSehgal2003}. We summarize the results achieved on this question in a theorem.

\begin{theorem}\label{MatrixProblemTh} Let $N$ be a finite nilpotent group.
\begin{enumerate}
\item\label{Problem2Positive} The Matrix Zassenhaus Problem has a positive solution for $N$ and a positive integer $k$ in the following cases:
    \begin{enumerate}
    \item $k = 1$.
    \item If $N$ is abelian and one of the following conditions holds:
        \begin{itemize}
        \item[$\bullet$] $k$ is a prime smaller than any prime divisor of $|N|$ or
        \item[$\bullet$] $k \leq 5$ or
        \item[$\bullet$] for $p$ and $q$ the smallest primes such that $N_p$ and $N_q$ are not cyclic we have $p + q > \frac{k^2 + k - 8}{4}$
        \end{itemize}
    \item $N$ has at most one non-cyclic Sylow subgroup.
    \end{enumerate}
\item If $N$ has more than one non-cyclic Sylow subgroup then the Matrix Zassenhaus Problem has a negative solution for $N$ and some positive integer $k$.
\end{enumerate}
\end{theorem}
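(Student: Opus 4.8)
The statement collects results that are already in the literature, so my plan is, for each case, to identify the source and carry out the small reduction needed to put it in the stated form. For part~(1a), when $k=1$ we have $\V_1(\Z N)=\V(\Z N)$ and $\GL_1(\Q N)=(\Q N)^{\times}$, while a $1\times 1$ diagonal matrix with entries in $N$ is just an element of $N$; moreover a torsion element of $\V(\Z N)$, having augmentation $1$, can only be conjugate to $+g$ rather than $-g$. Hence the Matrix Zassenhaus Problem for $k=1$ is precisely the Zassenhaus Conjecture for $N$ on torsion units, which for nilpotent $N$ is Weiss' Theorem \cite{Weiss1991}.

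For part~(1b) I would first use that, $N$ being abelian, the Wedderburn decomposition $\Q N\cong\prod_i\Q(\zeta_{d_i})$ identifies $\GL_k(\Q N)$ with $\prod_i\GL_k(\Q(\zeta_{d_i}))$, so that a torsion element of $\V_k(\Z N)$ becomes a compatible tuple of finite-order matrices over cyclotomic fields and the requirement of being ``conjugate to a diagonal matrix with entries in $N$'' becomes a rationality statement about their eigenvalue data. With this translation in place, the three numerical criteria are exactly the positive results obtained in \cite{MarciniakRitterSehgalWeiss1987, Weiss1988, LutharPassi1992, CliffWeiss}; the remaining work is bookkeeping---matching hypotheses (torsion versus all units, the role of integrality, the precise meaning of being diagonalizable over $N$) and checking the arithmetic of the bound $p+q>\tfrac{k^2+k-8}{4}$ against the shape in which it is actually established.

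Parts~(1c) and~(2) are respectively the two halves of the main theorem of Cliff and Weiss \cite{CliffWeiss}, which characterises the nilpotent $N$ for which the Matrix Zassenhaus Problem has a positive solution for \emph{all} $k$ as exactly those with at most one non-cyclic Sylow subgroup: for~(1c) one invokes the implication that this condition is sufficient and then specialises to the given $k$, and for~(2) one invokes the contrapositive of the implication that it is necessary. I do not expect a genuine mathematical obstacle in assembling the theorem; the only real difficulty is expository, namely reconciling the several authors' conventions---in particular reading $\V_k(\Z N)$ in the statement of the Matrix Zassenhaus Problem as referring to its set of torsion elements, and keeping ``conjugate in $\GL_k(\Q N)$'' (rather than conjugacy over $\Z N$ or over a $p$-adic completion) as the operative equivalence.
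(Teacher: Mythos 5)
Your approach is the same as the paper's: Theorem~\ref{MatrixProblemTh} is a survey theorem and its proof is a pointer to the literature, with the only substantive step being the observation (for part~(1a)) that the $k=1$ case of the Matrix Zassenhaus Problem is precisely the Zassenhaus Conjecture for torsion units, settled by Weiss' Theorem for nilpotent $N$. Your treatment of parts~(1a), (1c) and~(2) matches the paper exactly, with the last two being the two directions of \cite[Theorem~6.3]{CliffWeiss}.

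The genuine problem is in part~(1b), where you have misattributed the results. The three bullet points are proved, \emph{respectively}, in \cite[Theorem~4.6]{MarciniakRitterSehgalWeiss1987}, in \cite{MarciniakSehgal2000} (for $k\le 5$), and in \cite{LeeSehgal2000} (for the bound $p+q>\tfrac{k^2+k-8}{4}$). You instead point to the quadruple \cite{MarciniakRitterSehgalWeiss1987, Weiss1988, LutharPassi1992, CliffWeiss}, which omits the two papers that actually contain the second and third criteria, and includes papers that do not prove them (\cite{Weiss1988} concerns $p$-groups, \cite{LutharPassi1992} is a different contribution to the circle of ideas, and \cite{CliffWeiss} is the source of (1c) and (2), not of the numerical bounds for abelian $N$). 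Since the entire content of this theorem is to record correct attributions, this is a real gap: a reader following your references would not find proofs of the $k\le 5$ case or of the $p+q$ bound. The Wedderburn-decomposition reduction you sketch for abelian $N$ is harmless and correct as background, but it is not used in the paper's proof and does not by itself rescue the citation error; what is needed is simply the correct references, namely Marciniak--Sehgal and Lee--Sehgal.
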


\begin{proof}
(1.a) was proved in \cite{Weiss1991}.
The results in (1.b) were proven in \cite[Theorem~4.6]{MarciniakRitterSehgalWeiss1987}, \cite{MarciniakSehgal2000} and  \cite{LeeSehgal2000} respectively. (1.c) and (2) were proved in \cite[Theorem~6.3]{CliffWeiss}.
\end{proof}

The negative result of Cliff and Weiss in Theorem~\ref{MatrixProblemTh}.(2) relies on an induction argument and counterexamples in the minimal finite nilpotent groups with two non-cyclic Sylow subgroups.
More precisely, for these minimal nilpotent groups $N$ they construct concrete characters $\chi$ of the form $\chi_{\alpha}$ for some embedding $\alpha:\GEN{U} \hookrightarrow \GL_k(\Z N)$ which yield to negative solutions to the Matrix Zassenhaus Problem \cite[Section~5]{CliffWeiss}.
This, in their words, "is a counterexample to the strategy of \cite{MarciniakRitterSehgalWeiss1987}".
This statement is only partly true. Indeed, the construction of Cliff and Weiss yields a very general negative solution for the Matrix Zassenhaus Problem in the nilpotent case. But if we focus on the Zassenhaus Conjecture we are only interested in torsion matrices $U$ in $\V_k(\Z N)$ which are the images under the homomorphism $\Phi$ of actual torsion elements of $\V(\Z G, N)$ for some group $G$. However, using \eqref{CharacterPA} one can see that the traces of the matrices $U$ given by Cliff and Weiss have non-vanishing $1$-coefficient and this means that they can not be the images of torsion elements in $\V(\Z G, N)$ for any group $G$ by \eqref{TraceFormula} and the Berman-Higman Theorem (see \cite[Proposition~1.4]{Sehgal1993} or \cite[Proposition~1.5.1]{GRG1}).

\section{The Cliff-Weiss inequalities}\label{SectionInequalities}

The goal of this section is to obtain the inequalities mentioned in the introduction, a particular case of which is described in Proposition~\ref{InequalitiesAbelian}.

In the remainder, we fix a nilpotent normal subgroup $N$ of a finite group $G$ and a torsion element $u$ of $\V(\Z G,N)$.
We also fix the following notation: $k=[G:N]$; $U=\GEN{u}$; $\alpha: U \hookrightarrow \V(\Z G)$ denotes the embedding homomorphism; $\chi = \chi_{\alpha}$, the character of the double action $\Z (U \times G)$-module $\Z[\alpha]$; $\Phi: \Z G \rightarrow M_k(\Z N)$ denotes the homomorphism defined in subsection~\ref{SubsectionMatrixStrategy} for a fixed transversal of $N$ in $G$.
Moreover, for an element $g \in G$ we denote $\pa{u}{g}=\pa{u}{g^G}$, for shortness, and define for $g \in G$ a subgroup of $U \times G$ by
    $$[g] = \langle (u, g) \rangle.$$

Using \eqref{CharacterPA} and \eqref{TraceFormula} we have
\begin{equation}\label{ResRalpha}
\res_{U\times N}^{U\times G}(\chi)=\chi_{\Phi\circ \alpha}.
\end{equation}
That is, the character of the double action $\Z(U\times N)$-module $\Z[\Phi\circ \alpha]$ is the restriction to $U\times N$ of the character of the double action $\Z(U\times G)$-module $\Z[\alpha]$.

Theorem~\ref{HertweckPAdic} implies that the set
		\begin{equation}\label{MDef}
	    \ell_{\mathbb{Q}G}(u) = \{n \in N : u_p \text{ and } n_p \text{ are rationally conjugate for every prime } p\}
		\end{equation}
contains all the elements $g \in G$ with $\pa{u}{g}\ne 0$ and if $n \in \ell_{\mathbb{Q}G}(u)$ then $\ell_{\mathbb{Q}G}(u) = \ell_G(n)$.
This implies the following obvious observation:

\begin{remark}
Suppose that for every $n,m \in N$ with $n^G \ne m^G$ there is a prime integer $p$ with $n_p^G\ne m_p^G$. Then Sehgal's Problem has a positive solution for $G$ and $N$.
\end{remark}

Clearly, $|u|=|n|$ for every $n \in \ell_{\mathbb{Q}G}(u)$.

We first give a description of $\chi$ and $\res_{U\times N}^{U\times G}(\chi)$ in terms of induced characters.

\begin{lemma}\label{CharacterAsIntegralPermutation} We have
\[\chi= \sum_{n^G, \ n \in \ell_{\mathbb{Q}G}(u)} \pa{u}{n} \; \ind_{[n]}^{U \times G}(1)=\sum_{g^G} \pa{u}{g} \; \ind_{[g]}^{U\times G}(1)\]
 and
\[\res_{U\times N}^{U\times G}(\chi) = \sum_{n^N} [C_G(n):C_N(n)] \ \pa{u}{n} \ \ind_{[n]}^{U\times N}(1).\]
\end{lemma}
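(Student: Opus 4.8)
The plan is to compute the character $\chi=\chi_\alpha$ of the double-action module directly from the character formula \eqref{CharacterPA} and identify it with a $\Z$-linear combination of induced characters $\ind_{[g]}^{U\times G}(1)$. First I would observe that for a subgroup $[g]=\GEN{(u,g)}$ of $U\times G$ the induced character $\ind_{[g]}^{U\times G}(1)$ is the permutation character on the cosets of $[g]$, and by the standard formula its value at $(u^j,h)$ equals $\frac{|U\times G|}{|[g]|}$ times the proportion of conjugates of $(u^j,h)$ lying in $[g]$. I would then evaluate both sides of the claimed identity at an arbitrary element $(u^j,h)\in U\times G$ and check equality. Since $\chi$ is a genuine character of $U\times G$ and the induced characters $\ind_{[g]}^{U\times G}(1)$ span (over $\Z$) a natural sublattice, the cleanest route is to verify the identity pointwise.

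The key computation is the value of $\ind_{[g]}^{U\times G}(1)$ at $(u^j,h)$. Note $[g]$ is cyclic generated by $(u,g)$, so $(u,g)^m=(u^m,g^m)$, and $(u^j,h)$ is conjugate in $U\times G$ into $[g]$ precisely when $h$ is conjugate in $G$ to $g^j$ (the first coordinate forces the exponent to be $j$ since $U$ is central, i.e. abelian, in $U\times G$ — more precisely $u^j$ must equal the first coordinate $(u,g)^m=u^m$, so $m\equiv j$ modulo $|u|$, but one must be careful if $|g|<|u|$; here Theorem~\ref{HertweckPAdic} and the remark that $|u|=|n|$ for $n\in\ell_{\Q G}(u)$, together with the fact that only such $n$ contribute, handle the order bookkeeping). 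Carrying this out, the contribution of the term indexed by $n$ to $\chi(u^j,h)$ will come out proportional to $|C_G(h)|$ times the partial augmentation $\pa{u}{n}$ summed over those $n$ whose $j$-th power is conjugate to $h$, and after reorganizing the sum by conjugacy classes this matches $|C_G(h)|\,\pa{\tr(\alpha(u^j))}{h^G}=\chi(u^j,h)$ via \eqref{CharacterPA}. The passage from the sum over $n\in\ell_{\Q G}(u)$ to the sum over all $g^G$ is immediate from Theorem~\ref{HertweckPAdic}(2), since $\pa{u}{g}=0$ for $g\notin\ell_{\Q G}(u)\subseteq N$.

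For the second identity, concerning $\res_{U\times N}^{U\times G}(\chi)$, I would invoke \eqref{ResRalpha}: the restriction equals $\chi_{\Phi\circ\alpha}$, the character of the double-action $\Z(U\times N)$-module. Now I apply the same argument as above but inside $U\times N$, using \eqref{TraceFormula} to relate $\pa{\tr(\Phi(u))}{n^N}$ to $[C_G(n):C_N(n)]\,\pa{u}{n^G}$. This produces exactly the coefficient $[C_G(n):C_N(n)]\,\pa{u}{n}$ in front of $\ind_{[n]}^{U\times N}(1)$, where now $[n]=\GEN{(u,n)}$ is viewed as a subgroup of $U\times N$ (which makes sense since all contributing $n$ lie in $N$). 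Alternatively, and perhaps more transparently, one can derive the second formula from the first by applying Mackey/transitivity of induction: $\res_{U\times N}^{U\times G}\ind_{[n]}^{U\times G}(1)$ decomposes according to $(U\times N)$-orbits on $(U\times G)/[n]$, and since $[n]\le U\times N$ already, this is governed by the double cosets $(U\times N)\backslash(U\times G)/[n]$; unwinding this gives a sum of $\ind_{[n']}^{U\times N}(1)$ with the right multiplicities, the index-of-centralizer factor arising exactly as in \eqref{TraceFormula}.

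The main obstacle I anticipate is the order bookkeeping in the induced-character computation: one must be precise about when $(u^j,h)\in U\times G$ is conjugate into the cyclic group $[g]$, which requires controlling the orders of $g$ versus $u$ and the possibility that distinct powers $(u,g)^m$ have the same first coordinate. This is where it is essential that only elements $n\in\ell_{\Q G}(u)$ with $|n|=|u|$ contribute (so $[n]\cong U$ and projection to the first coordinate is injective on $[n]$), and one must verify that the spurious terms indexed by $g$ with $\pa{u}{g}=0$ genuinely drop out rather than being silently assumed to. Once this is pinned down, the remaining manipulations — reindexing sums over conjugacy classes and matching coefficients against \eqref{CharacterPA} and \eqref{TraceFormula} — are routine.
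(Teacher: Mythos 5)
Your plan to verify the first identity \emph{pointwise} at every $(u^j,h)$ has a genuine gap. Writing out the permutation character, the value of $\ind_{[n]}^{U\times G}(1)$ at $(u^j,h)$ is $|C_G(h)|$ if $h$ is conjugate to $n^j$ and $0$ otherwise (as you note, the order bookkeeping $|n|=|u|$ makes the first coordinate determine the exponent). Therefore the equality $\chi(u^j,h)=\theta(u^j,h)$, where $\theta$ is the right-hand side, unwinds to the identity
\[
\pa{u^j}{h^G} \;=\; \sum_{\substack{n^G,\ n\in\ell_{\Q G}(u)\\ (n^j)^G=h^G}} \pa{u}{n^G}
\]
for all $j$ and all $h$. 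You describe the step where this appears as ``after reorganizing the sum by conjugacy classes this matches $\chi(u^j,h)$,'' but no reorganization of finite sums establishes it: this is a nontrivial statement about how partial augmentations of $u^j$ relate to those of $u$, and it is essentially equivalent to the lemma itself. Assuming it is circular.

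The paper avoids this entirely. It checks $\chi=\theta$ on $(U\times G)\setminus(U\times N)$ by noting both vanish there (Theorem~\ref{HertweckPAdic}(2)), and for agreement on $U\times N$ it does \emph{not} evaluate at all powers $u^j$; it evaluates at $(u,n)$ only, i.e.\ at $j=1$ where the identity above is trivial. What makes this single evaluation sufficient is the structural input from \cite[Lemma~4.2 and Proposition~4.3]{CliffWeiss}: the character $\chi_{\Phi\circ\alpha}=\res^{U\times G}_{U\times N}(\chi)$ admits a \emph{unique} expansion $\sum_{n^N} a_n\,\ind_{[n]}^{U\times N}(1)$ with $a_n\in\Z$, because $\Z[\Phi\circ\alpha]$ is controlled $p$-adically by Weiss' theory. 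Once existence and uniqueness of the $a_n$ are known, evaluating at $(u,n)$ pins them down via \eqref{CharacterPA1}, and comparing with the Mackey computation of $\res^{U\times G}_{U\times N}(\theta)$ (which you do carry out correctly) finishes the proof. Your proposal never invokes this Cliff--Weiss result, and without it the pointwise verification cannot close. The parts you got right are the easy ones: the equality of the two expressions for $\theta$ (since $\pa{u}{g}=0$ off $\ell_{\Q G}(u)$), the vanishing outside $U\times N$, and the Mackey/transitivity derivation of the second display from the first.
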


\begin{proof}
Let $\theta = \sum_{n^G, \ n \in \ell_{\mathbb{Q}G}(u)} \pa{u}{n} \; \ind_{[n]}^{U\times G}(1)$.
Since $\pa{u}{g} \neq 0$ implies $g \in \ell_{\mathbb{Q}G}(u)$, the second equality of the first equation holds.
Moreover, for the same reason $\chi$ and $\theta$ are equal on $(U\times G) \setminus (U \times N)$ by \eqref{CharacterPA1}.
By Mackey decomposition, for every $n \in \ell_{\mathbb{Q}G}(u)$, we have
	\begin{eqnarray*}
	\res_{U\times N}^{U\times G} \left( \ind_{[n]}^{U\times G}(1) \right) &=&
	\sum_{g\in G/N} \ind_{[n^g]}^{U\times N}(1) =
	[\Cen_G(n)N:N]\sum_{m^N\subseteq n^G} \ind_{[m]}^{U\times N}(1) \\
    &=&
	[C_G(n):C_N(n)]\sum_{m^N\subseteq n^G} \ind_{[m]}^{U\times N}(1),
	\end{eqnarray*}
where $\sum_{g\in G/N}$ means a sum with $g$ running on a set of representatives of cosets of $N$ in $G$.
Therefore
	$$\res_{U\times N}^{U\times G}(\theta)=
	\sum_{n^N} [C_G(n):C_N(n)] \ \pa{u}{n} \ \ind_{[n]}^{U\times N}(1).$$

So, in order to prove the lemma, it is enough to prove that $\chi$ and $\theta$ agree on $U\times N$.
Using \eqref{ResRalpha} and \cite[Lemma~4.2 and Proposition~4.3]{CliffWeiss} we have that
	$$\res_{U\times N}^{U\times G}(\chi)=\chi_{\Phi\circ \alpha} =
	\sum_{n^N} a_n \ \ind_{[n]}^{U\times N}(1)$$
for unique integers $a_n$.
Evaluating both sides at $(u,n)$  for $n\in N$ and using \eqref{CharacterPA1} we deduce that
	$$a_n=[C_G(n):C_N(n)] \pa{u}{n}.$$
In particular, if $n$ and $m$ are conjugate in $G$ we have $a_n=a_m$ and if $n\not \in \ell_{\mathbb{Q}G}(u)$ then $a_n=0$. Therefore
	\begin{eqnarray*}
	\res_{U\times N}^{U\times G}(\chi) &=&
	\sum_{n^G} [C_G(n):C_N(n)] \ \pa{u}{n}\sum_{m^N \subseteq n^G} \ \ind_{[m]}^{U\times N}(1) \\
	&=&
	\sum_{n^N} [C_G(n):C_N(n)] \ \pa{u}{n} \ \ind_{[n]}^{U\times N}(1) =
	\res_{U\times N}^{U\times G}(\theta),
	\end{eqnarray*}
as desired.
\end{proof}

A first application of the description of $\res^{U\times G}_{U\times N}(\chi)$ is the following equality which will be used in Section~\ref{SectionApplications}:
\begin{equation}\label{anSums}
\sum_{n^N, n \in N} [C_G(n):C_N(n)]\pa{u}{n} = [G:N].
\end{equation}
Indeed, $\res_{U\times N}^{U\times G}(\chi)$ is a character of degree $|G|$ and each $\ind_{[n]}^{U\times N}(1)$, with $n\in \ell_{\Q G}(u)$, has degree $|N|$. So, \eqref{anSums} follows from the second formula in Lemma \ref{CharacterAsIntegralPermutation}.

The description of $\chi$ provides us with the first kind of inequalities which one might call ``global inequalities''.

If $n \in \ell_{\mathbb{Q}G}(u)$ and $\psi$ is a character of $U \times G$ then let
$$a(n, \psi)=\frac{1}{|u|}\sum_{i=0}^{|u|-1} \psi(u^i, n^i).$$
Observe that if the order of $n$ divides the order of $u$, in particular if $n\in \ell_{\Q G}(u)$, then
$a(n, \psi) = \GEN{1,\res_{[n]}^{U\times G} \psi}_{[n]}\in \Z^{\ge 0}$.

\begin{proposition}\label{EquationCharacter}
If $\psi$ is a character of $U\times G$ then
\[\langle \chi, \psi \rangle = \sum_{n^G, n \in \ell_{\mathbb{Q}G}(u)} a(n,\psi) \ \pa{u}{n} \]
and, in particular,
$$\sum_{n^G, n \in \ell_{\mathbb{Q}G}(u)} a(n,\psi) \ \pa{u}{n} \ge 0.$$
\end{proposition}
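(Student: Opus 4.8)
The plan is to use the description of $\chi$ as a nonnegative integral combination of induced characters that was established in Lemma~\ref{CharacterAsIntegralPermutation}, together with Frobenius reciprocity. Concretely, I would start from the first formula of Lemma~\ref{CharacterAsIntegralPermutation},
\[
\chi = \sum_{n^G,\ n\in\ell_{\mathbb{Q}G}(u)} \pa{u}{n}\ \ind_{[n]}^{U\times G}(1),
\]
take the hermitian product with the arbitrary character $\psi$ of $U\times G$, and pull the sum outside:
\[
\GEN{\chi,\psi}_{U\times G} = \sum_{n^G,\ n\in\ell_{\mathbb{Q}G}(u)} \pa{u}{n}\ \GEN{\ind_{[n]}^{U\times G}(1),\psi}_{U\times G}.
\]
Then I would apply Frobenius reciprocity to each term, $\GEN{\ind_{[n]}^{U\times G}(1),\psi}_{U\times G} = \GEN{1,\res_{[n]}^{U\times G}\psi}_{[n]}$, and identify this with $a(n,\psi)$ using the remark immediately preceding the proposition: since $n\in\ell_{\mathbb{Q}G}(u)$ forces $|n|$ to divide $|u|$, the cyclic group $[n]=\GEN{(u,n)}$ has order $|u|$, its elements are exactly $(u^i,n^i)$ for $0\le i<|u|$, and averaging the character values of $\psi$ over these gives both $a(n,\psi)$ and $\GEN{1,\res_{[n]}^{U\times G}\psi}_{[n]}$.

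Having matched the two expressions, the displayed equality $\GEN{\chi,\psi} = \sum a(n,\psi)\pa{u}{n}$ follows. For the ``in particular'' inequality, I would observe that $\psi$ being a genuine character (not merely a virtual one) makes $\chi$ and $\psi$ both characters, so $\GEN{\chi,\psi}_{U\times G}$ is a nonnegative integer, being the multiplicity-type pairing of two characters; hence the right-hand sum is $\ge 0$ as claimed.

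I do not expect a serious obstacle here: the proposition is essentially a formal consequence of the already-proven structure of $\chi$. The only point requiring a little care is the identification of $a(n,\psi)$ with the Frobenius-reciprocity pairing, which hinges on $[n]$ genuinely being cyclic of order $|u|$ (equivalently $|n| \mid |u|$); this is exactly guaranteed by $n\in\ell_{\mathbb{Q}G}(u)$, as noted right before the statement, so the summation ranges match up without leftover terms. A secondary small point is to make sure the nonnegativity in the last inequality uses that $\psi$ is a character, not a virtual character — if one only had a virtual character the equality would still hold but the sign would not.
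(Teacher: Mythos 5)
Your proof is correct and takes essentially the same route as the paper: both start from the first displayed formula in Lemma~\ref{CharacterAsIntegralPermutation}, take the hermitian product with $\psi$, apply Frobenius reciprocity term by term, identify the resulting inner products with $a(n,\psi)$ (using that $|n|=|u|$ for $n\in\ell_{\mathbb{Q}G}(u)$, so $[n]=\{(u^i,n^i):0\le i<|u|\}$), and obtain nonnegativity from the fact that $\chi$ and $\psi$ are genuine characters. The only cosmetic difference is that the paper writes the chain of equalities starting from $0\le\GEN{\chi,\psi}$, whereas you deduce the sign at the end; the content is identical.
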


\begin{proof}
By Frobenius Reciprocity and Lemma~\ref{CharacterAsIntegralPermutation} we have
	\begin{eqnarray*}
	0&\le& \GEN{\chi,\psi}_{U \times G} = \sum_{n^G, n\in \ell_{\mathbb{Q}G}(u)} \pa{u}{n} \GEN{\ind_{[n]}^{U\times G}(1),\psi}_{U \times G} \\
    &=&
    \sum_{n^G, n \in \ell_{\mathbb{Q}G}(u)} \pa{u}{n} \GEN{1,\res_{[n]}^{U\times G}(\psi)}_{[n]}
	= \sum_{n^G, n \in \ell_{\mathbb{Q}G}(u)} a(n,\psi) \pa{u}{n}.
	\end{eqnarray*}
\end{proof}

Cliff and Weiss \cite{CliffWeiss} obtained more precise information on the character $\res^{U\times G}_{U\times N}(\chi)$ and this allows us to formulate the ``local inequalities'' in Theorem~\ref{Inequalities} below.

Fix a prime integer $p$ and $\x \in \ell_{\mathbb{Q}G}(u_p)$.
Note that $\x$ is unique up to conjugacy in $G$.
For every $n \in N_{p'}$ and every character $\psi$ of $U_{p'} \times N_{p'}$ set
    \begin{eqnarray*}
    a'_{\x}(n,\psi) &=& [\Cen_G(n\x):\Cen_N(n\x)] \; \GEN{1,\res^{U_{p'} \times N_{p'}}_{[n]_{p'}} \psi}_{[n]_{p'}} \text{ and}\\
    a_{\x}(n,\psi) &=& \sum_{g\in \Cen_G(\x)/\Cen_N(\x)} \GEN{1,\res_{[n^g]_{p'}}^{U_{p'} \times N_{p'}}\psi}_{[n^g]_{p'}}.
    \end{eqnarray*}

Consider the following maps
    $$\matriz{{ccccc}
    \Cen_G(\x)/\Cen_N(\x) & \stackrel{\alpha}{\rightarrow} & \Cen_G(\x)N/ N & \stackrel{\beta}{\rightarrow} & \Cen_G(n \x)N \backslash \Cen_G(\x) N \\
    g\Cen_N(\x) & \mapsto & gN & \mapsto & \Cen_G(n\x)Ng}$$
for $g\in \Cen_G(\x)$.
(Caution: $X\backslash Y$ stands for right cosets of $X$ in $Y$. Do not confuse it with set theoretical difference.)
Then $\alpha$ is a group isomorphism and $\beta$ is a surjective map such that $\beta(gN)=\beta(hN)$ if and only if $gh^{-1}\in \Cen_G(n\x)N$ and in this case $n^g$ and $n^h$ are conjugate in $N$.
Thus
    \begin{eqnarray*}
    \GEN{1,\res_{[n^g]_{p'}}^{U_{p'} \times N_{p'}}\psi}_{[n^g]_{p'}}&=&
    \GEN{\ind_{[n^g]_{p'}}^{U_{p'}\times N_{p'}} (1),\psi}_{U_{p'}\times N_{p'}} \\ &=&\GEN{\ind_{[n^h]_{p'}}^{U_{p'}\times N_{p'}} (1),\psi}_{U_{p'}\times N_{p'}} =\GEN{1,\res_{[n^h]_{p'}}^{U_{p'} \times N_{p'}}\psi}_{[n^h]_{p'}}.
    \end{eqnarray*}
Therefore, as $[NC_G(n\x):N]=[C_G(n\x):C_N(n\x)]$, we have
    \begin{eqnarray}\label{Inequality1}
    a_{\x}(n,\psi) &=& \sum_{g\in \Cen_G(\x)/\Cen_N(\x)} \GEN{1,\res_{[n^g]_{p'}}^{U_{p'} \times N_{p'}} \psi}_{[n^g]_{p'}} \nonumber\\
    &=& \sum_{g\in \Cen_G(n\x)N \backslash \Cen_G(\x) N} [\Cen_G(\x)N:N] \GEN{1,\res_{[n^g]_{p'}}^{U_{p'} \times N_{p'}}\psi}_{[n^g]_{p'}} \\
    &=& \sum_{g\in \Cen_G(n\x)N \backslash \Cen_G(\x) N} a'_{\x}(n^g,\psi).\nonumber
    \end{eqnarray}

\begin{theorem}\label{Inequalities}
Let $p$ be a prime integer and $\x \in \ell_{\mathbb{Q}G}(u_p)$. Then for every character $\psi$ of $U_{p'}\times N_{p'}$ we have
    $$\sum_{n^{\Cen_G(\x)}, \ n \in \ell_{\mathbb{Q}G}(u_{p'})} a_{\x}(n,\psi) \pa{u}{n\x} = \sum_{n^N, \ m \in \ell_{\mathbb{Q}G}(u_{p'})} a'_{\x}(n,\psi) \pa{u}{n\x}\ge 0.$$
\end{theorem}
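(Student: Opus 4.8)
The plan is to establish the displayed equality and the inequality separately. The equality is a bookkeeping consequence of \eqref{Inequality1} together with the fact that partial augmentations are $G$-class functions; the inequality comes from writing the common value of both sides as $\GEN{\Theta,\psi'}_{U\times N}$ for two \emph{genuine} characters $\Theta$ and $\psi'$ of $U\times N$, where $\Theta$ is the ``$\x$-part'' of $\chi_{\Phi\circ\alpha}$ whose being a true character is exactly the refined information of Cliff and Weiss, and $\psi'$ is an inflation of $\psi$.

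\emph{The equality.} Substituting \eqref{Inequality1} into the left hand side, and choosing representatives $g$ inside $\Cen_G(\x)$ (each right coset of $\Cen_G(n\x)N$ in $\Cen_G(\x)N$ meets $\Cen_G(\x)$), one obtains a double sum whose terms are $a'_{\x}(n^g,\psi)\,\pa{u}{n\x}$. Since $g\in\Cen_G(\x)$ we have $n^g\x=(n\x)^g$, so $\pa{u}{n\x}=\pa{u}{n^g\x}$, and $a'_{\x}(n^g,\psi)$ depends only on the $N$-class of $n^g$. It then remains to verify that $(n^{\Cen_G(\x)},\,\Cen_G(n\x)Ng)\mapsto (n^g)^N$ is a well-defined bijection from these pairs onto the set of $N$-classes of elements of $\ell_{\mathbb{Q}G}(u_{p'})$: surjectivity holds because $\ell_{\mathbb{Q}G}(u_{p'})$ is a union of $\Cen_G(\x)$-classes, and well-definedness together with injectivity are contained in the computation preceding \eqref{Inequality1}, once one uses that $N$ is nilpotent --- so that $N_{p'}\subseteq\Cen_N(\x)$ and $N_p\subseteq\Cen_N(n)$ for $n\in N_{p'}$, which makes $\Cen_N(\x)$ transitive on each $N$-class of $p'$-elements and allows the ``$N$-part'' of any conjugating element to be absorbed into $\Cen_G(\x)$. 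This rewrites the right hand side of the substituted identity as $\sum_{n^N,\,n\in\ell_{\mathbb{Q}G}(u_{p'})} a'_{\x}(n,\psi)\,\pa{u}{n\x}$, as wanted.

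\emph{The inequality.} Start from $\res_{U\times N}^{U\times G}(\chi)=\sum_{m^N}[\Cen_G(m):\Cen_N(m)]\,\pa{u}{m}\,\ind_{[m]}^{U\times N}(1)$ (Lemma~\ref{CharacterAsIntegralPermutation}), in which every contributing $m$ satisfies $m\in\ell_{\mathbb{Q}G}(u)$, hence $m_p$ is $G$-conjugate to $\x$. Let $\Theta$ be the partial sum over those $m$ whose $p$-part is $N$-conjugate to $\x$. Each such $m$ can be written $m=n\x$ with $n\in\ell_{\mathbb{Q}G}(u_{p'})$ a $p'$-element (commuting with $\x$, as $N$ is nilpotent), the $N$-class of $m$ being determined by the $N$-class of $n$ (since $N_{p'}\subseteq\Cen_N(\x)$); using $N=N_p\times N_{p'}$, $U=U_p\times U_{p'}$ and the resulting factorization $[n\x]=[n\x]_p\times[n]_{p'}$ with $[n\x]_p=\GEN{(u_p,\x)}$ one finds
\[\Theta=\sum_{n^N,\,n\in\ell_{\mathbb{Q}G}(u_{p'})}[\Cen_G(n\x):\Cen_N(n\x)]\,\pa{u}{n\x}\,\Bigl(\ind_{[n\x]_p}^{U_p\times N_p}(1)\boxtimes\ind_{[n]_{p'}}^{U_{p'}\times N_{p'}}(1)\Bigr).\]
Now I would invoke the refined result of Cliff and Weiss on $\chi_{\Phi\circ\alpha}$ --- the ``more precise information on $\res_{U\times N}^{U\times G}(\chi)$'' mentioned above --- to conclude that this $\Theta$ is a genuine character of $U\times N$ (heuristically: after restriction to the $p$-group $U_p\times N_p$ the underlying lattice is, by Weiss's permutation-lattice theorem, a permutation lattice, so it breaks up along its orbits, which are parametrized by the $N$-classes of the $p$-parts). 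Finally, let $\psi'$ be the inflation of $\psi$ along the projection $U\times N\to U_{p'}\times N_{p'}$; it is a genuine character of $U\times N$ which is trivial on the $p$-part $[n\x]_p$ of each $[n\x]$, so $\GEN{1,\res_{[n\x]}^{U\times N}\psi'}_{[n\x]}=\GEN{1,\res_{[n]_{p'}}^{U_{p'}\times N_{p'}}\psi}_{[n]_{p'}}$. Hence, by Frobenius reciprocity,
\[0\le\GEN{\Theta,\psi'}_{U\times N}=\sum_{n^N,\,n\in\ell_{\mathbb{Q}G}(u_{p'})}[\Cen_G(n\x):\Cen_N(n\x)]\,\pa{u}{n\x}\,\GEN{1,\res_{[n]_{p'}}^{U_{p'}\times N_{p'}}\psi}_{[n]_{p'}}=\sum_{n^N,\,n\in\ell_{\mathbb{Q}G}(u_{p'})}a'_{\x}(n,\psi)\,\pa{u}{n\x},\]
which is the right hand side of the theorem.

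The step I expect to be the real obstacle is the appeal to Cliff and Weiss: the inequality stands or falls on the assertion that the $\x$-component $\Theta$ of $\chi_{\Phi\circ\alpha}$ is an honest character rather than a virtual one, and correctly extracting and applying precisely that statement --- which ultimately rests on Weiss's permutation-lattice theorem for $p$-groups --- is the heart of the matter. By comparison the reindexing bijection underlying the equality is routine, although it does require some care with the subgroup structure of the nilpotent group $N$.
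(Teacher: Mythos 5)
Your proof is correct and follows essentially the same route as the paper: derive the equality from \eqref{Inequality1} by grouping $N$-classes into $\Cen_G(\x)$-classes, then reduce the inequality to the Cliff--Weiss theorem that the $\x$-component of $\res^{U\times G}_{U\times N}(\chi)$ is a genuine character, and pair with $\psi$ via Frobenius reciprocity. The only (cosmetic) difference is that you work at the level of $U\times N$ with the full piece $\Theta=\ind_{[\x]_p}^{U_p\times N_p}(1)\otimes\chi_\x$ and an inflated $\psi'$, whereas the paper passes directly to $\chi_\x$ as a character of $U_{p'}\times N_{p'}$ and applies \cite[Theorem~3.3]{CliffWeiss} there; since $\GEN{\Theta,\psi'}_{U\times N}=\GEN{\chi_\x,\psi}_{U_{p'}\times N_{p'}}$ and $\Theta$ is genuine iff $\chi_\x$ is, the two formulations are equivalent.
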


\begin{proof} By \eqref{Inequality1}, for a fix $n \in \ell_{\mathbb{Q}G}(u_{p'})$ we have
    \begin{eqnarray*}
    \sum_{m^{N_{p'}},  m \in n^{\Cen_G(\x)}} a'_{\x}(m,\psi) \; \pa{u}{m\x} &=&
    \left(\sum_{g\in \Cen_G(n\x)N\backslash \Cen_G(\x)N} a'_{\x}(n^g,\psi)\right) \pa{u}{n\x}  \\
    &=& a_{\x}(n,\psi)\; \pa{u}{n\x}.
    \end{eqnarray*}
This implies the equality.
By Lemma~\ref{CharacterAsIntegralPermutation}, we have
	$$\res_{U\times N}^{U\times G}(\chi) =	
    \sum_{y^{N_p}, y \in \ell_{\mathbb{Q}G}(u_p)} \ind_{[y]_p}^{U_p\times N_p}(1) \otimes \chi_y$$
with
    $$\chi_y=\sum_{n^{N_{p'}},n \in \ell_{\mathbb{Q}G}(u_{p'})} [\Cen_G(ny):\Cen_N(ny)] \; \pa{u}{ny} \; \ind_{[n]_{p'}}^{U_{p'}\times N_{p'}}(1).$$
By \cite[Theorem~3.3]{CliffWeiss}, each $\chi_y$ is a proper character of $U_{p'}\times N_{p'}$.
We apply this to $y=\x$.

By Frobenius reciprocity we have
    $\GEN{\ind_{[n]_{p'}}^{U_{p'}\times N_{p'}}(1),\psi}_{U_{p'}\times N_{p'}} =
    \GEN{1,\res_{[n]_{p'}}^{U_{p'} \times N_{p'}}\psi}_{[n]_{p'}}.$
Thus
    $$a'_{\x}(n,\psi)=[\Cen_G(n\x):\Cen_N(n\x)] \;\GEN{\ind_{[n]_{p'}}^{U_{p'}\times N_{p'}}(1),\psi}_{U_{p'}\times N_{p'}}$$
and therefore
    $$\sum_{n \in {N_{p'}},n \in \ell_{\mathbb{Q}G}(u_{p'})} a'_{\x}(n,\psi) \; \pa{u}{n\x}=\GEN{\chi_{\x},\psi}_{U_{p'}\times N_{p'}}\ge 0.$$
This finishes the proof.
\end{proof}

We next formulate Theorem~\ref{Inequalities} in a special case which includes Proposition~\ref{InequalitiesAbelian}.
This will be our main tool for the applications.

\begin{corollary}\label{InequalityCocyclicNAbelian}
Let $p$ be a fixed prime and let $K$ be a normal subgroup of $N_{p'}$ such that $N_{p'}/K$ is cyclic. Let $\x \in \ell_{\mathbb{Q}G}(u_p)$ and  $n \in N_{p'}$. Then
\begin{align*}
	\sum_{m^{\Cen_G(\x)},m\in \ell_{\mathbb{Q}G}(u_{p'})} [\Cen_G(m\x):\Cen_N(m\x)] \;
	|m^{\Cen_G(\x)}\cap nK| \; \pa{u}{m\x}\ge 0.
	\end{align*}
If $N_{p'}$ is abelian then this inequality is equivalent to
    \begin{align*}
    \sum_{m\in nK} |\Cen_G(m\x)| \; \pa{u}{m\x}\ge 0.
    \end{align*}
\end{corollary}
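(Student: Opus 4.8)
Throughout write $p$, $\x\in\ell_{\mathbb{Q}G}(u_p)$, $n\in N_{p'}$ and $K\trianglelefteq N_{p'}$ with $N_{p'}/K$ cyclic as in the statement. The plan is to deduce the inequality from Theorem~\ref{Inequalities} by feeding it a character $\psi$ of $U_{p'}\times N_{p'}$ that ``detects the coset $nK$'', and then to rewrite the two sides in the special shape claimed. First I would dispose of the trivial case: if $\pa{u}{m\x}=0$ for all $m\in nK$, both inequalities read $0\ge 0$; otherwise fix $m_0\in nK$ with $\pa{u}{m_0\x}\ne 0$ and replace $n$ by $m_0$ (same coset), so that $n\x\in\ell_{\mathbb{Q}G}(u)$, hence $n\in\ell_{\mathbb{Q}G}(u_{p'})$ and $|n|=|u_{p'}|$. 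Since $N_{p'}/K$ is cyclic there is a linear character $\lambda$ of $N_{p'}$ with $\Ker\lambda=K$ (this is the only use of the cyclicity hypothesis), and since $\lambda(n)$ is then an $|u_{p'}|$-th root of unity there is a linear character $\mu$ of $U_{p'}=\langle u_{p'}\rangle$ with $\mu(u_{p'})=\overline{\lambda(n)}$.

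Next I would record the structural facts used throughout. As $N_{p'}/K$ is cyclic it is abelian, so $[N_{p'},N_{p'}]\subseteq K$; hence $nK$ is invariant under $N_{p'}$‑conjugation and is a union of $N_{p'}$‑conjugacy classes. As $\x\in N_p$ (it is a $p$‑element of $N$) it centralises $N_{p'}$, so $N_{p'}\le\Cen_G(\x)$; consequently every $\Cen_G(\x)$‑class contained in $N_{p'}$ is a disjoint union of $N_{p'}$‑classes, all of one common size $[N_{p'}:\Cen_{N_{p'}}(m)]=|m^{N_{p'}}|$, and for $m\in N_{p'}$ one has $\Cen_N(\x)=\Cen_{N_p}(\x)\times N_{p'}$ and $\Cen_N(m\x)=\Cen_{N_p}(\x)\times\Cen_{N_{p'}}(m)$. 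For $\psi=\mu\otimes\lambda$ the geometric sum defining $\langle 1,\res_{[l]_{p'}}\psi\rangle$ collapses to $[\,l\in nK\,]$ for every $l\in N_{p'}$, and this is constant on $N$‑classes because $\lambda$ is. Plugging this into the Mackey/double‑coset bookkeeping already carried out in the proof of Theorem~\ref{Inequalities} — formula~\eqref{Inequality1}, whose double cosets $\Cen_G(m\x)N\backslash\Cen_G(\x)N$ are in bijection with the $N_{p'}$‑classes inside $m^{\Cen_G(\x)}$ — yields $a_{\x}(m,\psi)=[\Cen_G(m\x):\Cen_N(m\x)]\cdot |m^{\Cen_G(\x)}\cap nK|/|m^{N_{p'}}|$. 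When $N_{p'}$ is abelian the factor $|m^{N_{p'}}|$ is $1$ and this is already the coefficient in the statement, so Theorem~\ref{Inequalities} gives the first inequality at once. In general one must instead use a character whose fixed‑point numbers equal $|l^{N_{p'}}\cap nK|$ rather than $[\,l\in nK\,]$ — for instance an appropriate combination of the induced ``translation'' characters $\ind_{\langle(u_{p'},c)\rangle}^{U_{p'}\times N_{p'}}(1)$ for $c\in nK$, which count conjugates of $l$ — or, equivalently, one may reduce to the abelian case by passing to $G/[N_{p'},N_{p'}]$ (where $N_{p'}$ becomes abelian) and transporting partial augmentations and the relevant centraliser indices along the quotient map.

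Finally the abelian simplification. If $N_{p'}$ is abelian then each $m\in N_{p'}$ is central in $N$, so $m^{N_{p'}}=\{m\}$, $\Cen_N(m\x)=\Cen_N(\x)$, and $[\Cen_G(m\x):\Cen_N(m\x)]=|\Cen_G(m\x)|/|\Cen_N(\x)|$. Hence the first displayed sum equals
\[
\frac{1}{|\Cen_N(\x)|}\sum_{m^{\Cen_G(\x)}}|\Cen_G(m\x)|\,|m^{\Cen_G(\x)}\cap nK|\,\pa{u}{m\x},
\]
and re‑indexing the outer sum over the individual elements of each $\Cen_G(\x)$‑class — on each of which $|\Cen_G(\,\cdot\,\x)|$ and $\pa{u}{\,\cdot\,\x}$ are constant — collapses it to $|\Cen_N(\x)|^{-1}\sum_{m\in nK}|\Cen_G(m\x)|\,\pa{u}{m\x}$. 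Since $|\Cen_N(\x)|>0$, the first inequality and the second are equivalent, as claimed.

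The step I expect to be the real obstacle is the ``upgrade'' in the second paragraph: producing a genuine character $\psi$ of $U_{p'}\times N_{p'}$ whose fixed‑point counts $\langle 1,\res_{[l]_{p'}}\psi\rangle$ reproduce $|l^{N_{p'}}\cap nK|$ (not merely $[\,l\in nK\,]$) and then checking that the orbit–stabiliser multiplicities emerging from \eqref{Inequality1} are exactly those in the statement — equivalently, justifying carefully the passage along $G\to G/[N_{p'},N_{p'}]$ at the level of centralisers and partial augmentations.
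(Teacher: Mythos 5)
Your choice of $\psi$ is exactly the one the paper uses: the linear character $\mu\otimes\lambda$ you describe has kernel $H=\langle 1\times K,(u_{p'},n)\rangle$, which is the character the paper constructs directly. Your reduction to $n\in\ell_{\mathbb{Q}G}(u_{p'})$, the computation of $a_{\x}(m,\psi)$ through~\eqref{Inequality1}, and the abelian re-indexing at the end all match. In fact your formula
$$a_{\x}(m,\psi)=[\Cen_G(m\x):\Cen_N(m\x)]\cdot \frac{|m^{\Cen_G(\x)}\cap nK|}{|m^{N_{p'}}|}$$
is more careful than what the paper's proof literally writes (the paper omits the $|m^{N_{p'}}|$ in the denominator), and it is correct.

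The gap is the conclusion you draw from it. You treat the extra factor $|m^{N_{p'}}|$ as a genuine obstruction in the non-abelian case and go hunting for a different character or a quotient trick, but this factor is \emph{constant} over the whole sum, so no upgrade is needed. Every $m$ contributing to the sum lies in $\ell_{\mathbb{Q}G}(u_{p'})$, so any two of them have $G$-conjugate $q$-parts for every prime $q$; since each $N_q$ is characteristic in $N$ and hence normal in $G$, this forces $|\Cen_{N_q}(m_q)|=|\Cen_{N_q}(m'_q)|$ for all $q$, and therefore $|m^{N_{p'}}|=[N_{p'}:\Cen_{N_{p'}}(m)]$ is the same for all $m$ in the sum. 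Consequently the inequality delivered by Theorem~\ref{Inequalities} for your $\psi$ is a positive scalar multiple of the one claimed in the corollary, and the first inequality follows at once in full generality. The two ``workarounds'' you sketch (combinations of induced translation characters, or passing to $G/[N_{p'},N_{p'}]$) are neither needed nor justified as written — the second in particular would require transporting partial augmentations and centraliser indices along the quotient, which you do not do — so as it stands your argument only proves the first inequality when $N_{p'}$ is abelian. The fix above closes the gap with one line and keeps you on the same route as the paper.
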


\begin{proof}
We only have to prove the first inequality because it easily implies the second one after multiplying with $|C_N(\x)| = |C_N(m\x)|$ for $m \in \ell_G(n)$.
We claim that we may assume that $n\in \ell_{\mathbb{Q}G}(u_{p'})$. Indeed, if $\ell_{\mathbb{Q}G}(u_{p'}) \cap nK = \emptyset$ then $|m^{C_G(\x)}\cap nK|=0$ for every $m\in \ell_{\mathbb{Q}G}(u_{p'})$ and hence the inequality is trivial. Otherwise we can replace $n$ by an element in $nK\cap \ell_{\mathbb{Q}G}(u_{p'})$.
This proves the claim. So in the remainder of the proof we assume that $n\in \ell_{\mathbb{Q}G}(u_{p'})$. Hence $n$ and $u_{p'}$ have the same order.

Let $H=\GEN{1\times K, (u_{p'},n)}$. As $N_{p'}/K$ is cyclic, $H$ contains the commutator of $U_{p'}\times N_{p'}$ and hence $H$ is normal in $U_{p'}\times N_{p'}$. Furthermore, $a\mapsto (1,a)$ induces a surjective homomorphism $N_{p'}/K\rightarrow (U_{p'}\times N_{p'})/H$. Then $(U_{p'} \times N_{p'})/H$ is cyclic and so there exists a linear character $\psi$ of $U_{p'} \times N_{p'}$ with kernel $H$. Hence, for $m \in \ell_{\mathbb{Q}G}(u_{p'})$ we have
    \[\GEN{1,\res_{[m]_{p'}}^{U_{p'}\times N_{p'}}(\psi)}_{[m]_{p'}} = \begin{cases}1, & \text{if } (u_{p'},m) \in  H; \\ 0, & \text{otherwise}.\end{cases}.\]
Moreover, as $n$ and $u_{p'}$ have the same order, $(u_{p'},\x)\in H$ if and only if $\x\in nK$.
We thus get
    \begin{eqnarray*}
    a_{\x}(m,\psi) &=& [\Cen_G(m\x):C_N(m\x)]\; \left|H\cap \left(\{u_{p'}\}\times m^{\Cen_G(\x)}\right)\right|  \\
    &=& [\Cen_G(m\x):\Cen_N(m\x)] \; |m^{\Cen_G(\x)}\cap nK|
    \end{eqnarray*}
and this implies the corollary by Theorem~\ref{Inequalities}.
\end{proof}

\begin{remark}
Assume that we are given a class function $\varepsilon: G \rightarrow \mathbb{Z}$ such that $\varepsilon$ only takes values in a local $G$-conjugacy class of elements in $N$ and $\sum_{n^G, n \in N} \varepsilon(n) = 1$.
We think the image of $\varepsilon$ as the partial augmentations of a possible unit in $\V(\Z G, N)$.
In other words when we say that $\varepsilon$ satisfies an inequality formulated for partial augmentations of an element $u$ we consider $\varepsilon(g)$ as $\pa{u}{g}$.

If the values of $\varepsilon$ satisfy the inequalities in Theorem~\ref{Inequalities} they also satisfy the inequalities in Proposition~\ref{EquationCharacter}.
\end{remark}

\begin{proof}
We fix a cyclic group $U=\GEN{u}$ of the same order as an element in the local $G$-conjugacy class where $\varepsilon$ is not vanishing.
To prove our claim we introduce the following notation for $m_p\in N_p$ and $\psi$ a character of $U\times G$:
    $$A(m_p,\psi)=\frac{1}{[G:N]}\sum_{m_{p'}^N, m_{p'} \in N_{p'}}  a'_{m_p}(m_{p'},\psi) \varepsilon(m_pm_{p'}).$$
Then
\begin{eqnarray*}
A(m_p,\psi) &=&\frac{1}{[G:N]}\sum_{m_{p'}^N, m_{p'} \in N_{p'}} a'_{m_p}(m_{p'},\psi) \varepsilon(m_pm_{p'}) \\
%&=&\sum_{m_{p'}^N, m_{p'} \in N_{p'}}  \frac{[\Cen_G(m_pm_{p'}):\Cen_N(m_pm_{p'})]}{[G:N]}  \GEN{1,\res_{[m_{p'}]_{p'}}^{U\times G}(\psi)}_{[m_{p'}]_{p'}} \varepsilon(m_pm_{p'})\\
&=&\sum_{m_{p'}^N, m_{p'} \in N_{p'}} \frac{[N:\Cen_N(m_pm_{p'})]}{[G:\Cen_G(m_pm_{p'})]} \GEN{1,\res_{[m_{p'}]_{p'}}^{U\times G}(\psi)}_{[m_{p'}]_{p'}} \varepsilon(m_pm_{p'}) \\
&=&\sum_{m_{p'}^N, m_{p'} \in N_{p'}} \frac{|(m_pm_{p'})^N|}{|(m_pm_{p'})^G|}  \GEN{1,\res_{[m_{p'}]_{p'}}^{U\times G}(\psi)}_{[m_{p'}]_{p'}} \varepsilon(m_pm_{p'})
\end{eqnarray*}
and
\begin{eqnarray*}
\sum_{m^G, m \in \ell_{G}(n)} a(m,\psi) \varepsilon(m)  &=&
\sum_{m^G, m \in \ell_G(n)} \GEN{1,\res_{[m]}^{U\times G}(\psi)}_{[m]} \varepsilon(m) \\
 &=& \sum_{m^N, m \in N} \frac{|m^N|}{|m^G|} \GEN{1,\res_{[m]}^{U\times G}(\psi)}_{[m]} \varepsilon(m) \\
 &=& \sum_{m_p^N, m_p \in N_p} \GEN{1,\res_{[m_p]_p}^{U\times G}(\psi)}_{[m_p]_p} A(m_p,\psi).
\end{eqnarray*}
Therefore, if $\varepsilon$ satisfies the inequalities of Theorem~\ref{Inequalities} then each $A(m_p,\psi)\ge 0$ and hence $\varepsilon$ satisfies the inequalities of Proposition~\ref{EquationCharacter}.
\end{proof}

\section{Subgroup Lattice calculations}\label{SectionLatticeCalculations}

In this section we make some calculations on the lattice of a finite abelian group $A$ which will be applied in the next section.
A reader only interested in units can skip the proofs.

A subgroup $K$ of $A$ is said to be \emph{cocyclic} if $A/K$ is cyclic.
Let $\Cyc(A)$ denote the set of cyclic subgroups of $A$ and let $\Cocyc(A)$ denote the set of cocyclic subgroups of $A$.
By convention, a \emph{maximal cyclic} (respectively, \emph{minimal cocyclic})  subgroup of $A$ is a maximal element of $\Cyc(A)$ (respectively, a minimal element of $\Cocyc(A)$), rather than a cyclic subgroup of $A$ which is maximal (respectively, minimal) as subgroup of $A$.

It is well know that there is an anti-automorphism $\Psi$ of the lattice of subgroups of $A$ such that $A/B\cong \Psi(B)$ and $A/\Psi(B)\cong B$ for every subgroup $B$ of $A$.
More precisely, on the one hand there is an isomorphism $f:A^*\rightarrow A$, where $A^*$ denotes the group of linear characters of $A$, and on the other hand $B\mapsto B^\perp=\{\lambda\in A^*:\lambda(B)=1\}$ defines an anti-isomorphism from the lattice of subgroups of $A$ to the lattice of subgroups of $A^*$ (see e.g. \cite[Problem~2.7]{Isaacs1976}).
Then $\Psi(B)=f(B^{\perp})$ defines an anti-automorphism in the lattice of subgroups of $A$ and we have $A/B\cong (A/B)^* \cong B^{\perp} \cong f(B^{\perp})=\Psi(B)$ and $A/\Psi(B)=A/f(B^{\perp}) \cong A^*/B^{\perp} \cong B^*\cong B$.

Therefore $\Psi$ restricts to an anti-isomorphism of ordered sets between $\Cyc(A)$ and $\Cocyc(A)$.
This yields a duality principal: A property holds for $\Cyc(A)$ if and only if the dual property holds for $\Cocyc(A)$.
The following lemma collects some of the properties satisfied by the cyclic subgroups and the corresponding dual properties for the cocyclic subgroups of an abelian $p$-group.

We use $H\le G$ (respectively, $H<G$) to express that $H$ is a subgroup (respectively, a proper subgroup) of a group $G$.

\begin{lemma}\label{CyclicCocyclic}
Let $p$ be a prime integer and let $A\cong \prod_{i=1}^e C_{p^i}^{l_i}$ with each $l_i$ a non-negative integer. For every $i=1,\dots,e$ let $L_i=\sum_{j\ge i} l_j$ and set $k=L_1=\sum_{i=1}^e l_i$.
\begin{enumerate}
\item\label{CyclicNoV} If $C\in \Cyc(A)$ and $B, \tilde{B} \leq C$ with $|B|=|\tilde{B}|$ then $B=\tilde{B}$.

\item\label{CocyclicNoV} If $ L \in \Cocyc(A)$ and $L \leq K, \tilde{K}\le A$ with $|K|=|\tilde{K}|$ then $K=\tilde{K}$.

\item\label{CyclicAbove} If $B\in \Cyc(A)$ then $\left|\{C\in \Cyc(A): B \leq C \text{ and } [C:B]=p\}\right|$ is either $0$, $p^{k-1}$ or $\frac{p^k-1}{p-1}$, and the latter holds if and only if $B=1$.
	
\item\label{CocyclicBelow} If $K\in \Cocyc(A)$ then $\left|\{L\in \Cocyc(A): L \leq K \text{ and } [K:L]=p\}\right|$ is either $0$, $p^{k-1}$ or $\frac{p^k-1}{p-1}$, and the latter holds if and only if $K=A$.

\item\label{CyclicCocyclicOrder} The number of cocyclic (respectively, cyclic) subgroups of $A$ of order (respectively, index) $p^i$ is $\frac{p^{L_i}-1}{p-1}p^{(L_i-1)(i-1)+\sum_{j<i} l_jj}$.
		
\item\label{CyclicNoMaxCocyclicNoMin} The number of non-minimal cocyclic (respectively, non-maximal cyclic) subgroups of $A$ of order (respectively, index) $p^i$  is
    $$\frac{p^{L_{i+1}}-1}{p-1}p^{(L_{i+1}-1)(i-1)+\sum_{j\le i} l_j(j-1)}=\frac{p^{L_{i+1}}-1}{p-1}p^{(L_i-1)(i-1)+\sum_{j< i} l_j(j-1)}.$$

%\item\label{CyclicCocyclicMaximalAtLeast} $A$ contains at least $p^{k-1}$ maximal cyclic (respectively, cocyclic) subgroups.
\end{enumerate}
\end{lemma}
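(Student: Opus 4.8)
The plan is to prove the statements about cyclic subgroups directly and to obtain the statements about cocyclic subgroups from them via the anti-automorphism $\Psi$ of the subgroup lattice of $A$ recalled just before the lemma. Since $\Psi$ reverses inclusions, interchanges $\Cyc(A)$ with $\Cocyc(A)$, and sends a subgroup of order $p^s$ to one of index $p^s$ (because $\Psi(B)\cong A/B$), it carries a cyclic subgroup of order $p^i$ to a cocyclic subgroup of index $p^i$, a maximal element of $\Cyc(A)$ to a minimal element of $\Cocyc(A)$, and an inclusion $B\le C$ of index $p$ to an inclusion $\Psi(C)\le\Psi(B)$ of index $p$. Hence \eqref{CocyclicNoV}, \eqref{CocyclicBelow}, and the cocyclic halves of \eqref{CyclicCocyclicOrder} and \eqref{CyclicNoMaxCocyclicNoMin} are the $\Psi$-images of \eqref{CyclicNoV}, \eqref{CyclicAbove}, and the cyclic halves of \eqref{CyclicCocyclicOrder}, \eqref{CyclicNoMaxCocyclicNoMin}, and it is enough to treat the cyclic side; part \eqref{CyclicNoV} is then the classical fact that a cyclic $p$-group has a unique subgroup of each order dividing its own.

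For \eqref{CyclicAbove}, write $|B|=p^m$. If $m=0$, the subgroups $C$ in question are exactly the subgroups of order $p$, i.e. the lines of the socle $A[p]\cong\F_p^{\,k}$, of which there are $\tfrac{p^k-1}{p-1}$. If $m\ge1$, then by \eqref{CyclicNoV} a cyclic $C$ with $B\le C$ and $[C:B]=p$ satisfies $C^p=B$, and conversely $\langle c\rangle$ is such a $C$ exactly when $c$ has order $p^{m+1}$ and $c^p$ generates $B$. Fixing a generator $b_0$ of $B$, the set of $p$-th roots of $b_0$ in $A$ is either empty or a coset of $A[p]$, hence of size $0$ or $p^k$, and it is nonempty for one (equivalently every) generator of $B$ exactly when $B\le A^p$. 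Counting: if $B\le A^p$ there are $\varphi(p^m)\,p^k$ elements $c$ with $|c|=p^{m+1}$ and $c^p$ a generator of $B$ (each of the $\varphi(p^m)$ generators of $B$ has $p^k$ $p$-th roots, all automatically of order $p^{m+1}$), and each cyclic subgroup so produced is hit by its $\varphi(p^{m+1})$ generators, giving $\varphi(p^m)p^k/\varphi(p^{m+1})=p^{k-1}$; if $B\not\le A^p$ there are none. Since $\tfrac{p^k-1}{p-1}>p^{k-1}$ for $k\ge2$, the value $\tfrac{p^k-1}{p-1}$ occurs only when $m=0$, i.e. $B=1$.

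For the cyclic half of \eqref{CyclicCocyclicOrder}, I would compute $|A[p^j]|=p^{e_j}$ with $e_j=\sum_{m\le j}m\,l_m+jL_{j+1}$, check the telescoping identity $e_i-e_{i-1}=L_i$, and conclude that the number of elements of $A$ of order exactly $p^i$ is $|A[p^i]|-|A[p^{i-1}]|=p^{e_{i-1}}(p^{L_i}-1)$; dividing by $\varphi(p^i)=p^{i-1}(p-1)$ and rewriting $e_{i-1}-(i-1)=(L_i-1)(i-1)+\sum_{j<i}j\,l_j$ gives the asserted count. For the cyclic half of \eqref{CyclicNoMaxCocyclicNoMin}, I would first note (as in \eqref{CyclicAbove}) that a cyclic subgroup of order $p^i$ is non-maximal in $\Cyc(A)$ if and only if it lies in a cyclic subgroup of order $p^{i+1}$, if and only if it lies in $A^p$, and that cyclic subgroups of $A$ contained in $A^p$ are exactly the cyclic subgroups of $A^p$. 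As $A^p\cong\prod_{j\ge1}C_{p^j}^{\,l_{j+1}}$ has invariants $l'_j=l_{j+1}$ and hence $L'_i=L_{i+1}$, applying \eqref{CyclicCocyclicOrder} to $A^p$ and reindexing $\sum_{j<i}j\,l_{j+1}=\sum_{j\le i}(j-1)l_j$ yields the first displayed form; the second follows from $L_i=L_{i+1}+l_i$, which absorbs the discrepancy $(L_{i+1}-L_i+l_i)(i-1)=0$ between the two exponents.

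The conceptual ingredients — the coset structure of the set of $p$-th roots, and the equivalence ``non-maximal cyclic $\iff$ contained in $A^p$'' — are short; the step I expect to be most error-prone is the exponent bookkeeping in \eqref{CyclicCocyclicOrder} and \eqref{CyclicNoMaxCocyclicNoMin}: pinning down $|A[p^j]|$, verifying the telescoping $e_i-e_{i-1}=L_i$, and keeping straight how the invariants $l_j$ and $L_i$ transform under $A\mapsto A^p$ and under the reindexings.
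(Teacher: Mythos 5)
Your proof is correct, and its overall strategy — use the lattice anti-automorphism $\Psi$ to reduce to the cyclic statements, then count elements of a given order and divide by $\varphi(p^i)$ — matches the paper's. Two details are handled by a genuinely different (and, I think, cleaner) argument. For \eqref{CyclicAbove} the paper fixes a complement decomposition $S=H\times L$ of the socle and enumerates the cyclic overgroups inside $C\times L$ as $\langle cx\rangle$ with $x\in L$, whereas you observe that the $p$-th roots of a fixed generator of $B$ form a (possibly empty) coset of $A[p]$, count those elements, and divide by $\varphi(p^{m+1})$; this sidesteps the complement construction entirely and makes the trichotomy $0$ / $p^{k-1}$ / $\frac{p^k-1}{p-1}$ transparent via $B\le A^p$ or not. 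For \eqref{CyclicNoMaxCocyclicNoMin} the paper redoes an element count from scratch, whereas you reduce it to \eqref{CyclicCocyclicOrder} applied to $A^p$ after noting that a cyclic subgroup of order $p^i$ is non-maximal in $\Cyc(A)$ exactly when it lies in $A^p$; this explains \emph{why} the answer is the cyclic-subgroup count of $A^p$ rather than just arriving at the same formula. Your bookkeeping ($e_i-e_{i-1}=L_i$, the reindexing $\sum_{j<i}j\,l_{j+1}=\sum_{j\le i}(j-1)l_j$, and the cancellation $(L_{i+1}-L_i+l_i)(i-1)=0$) all check out. The only (very minor, shared with the paper) loose end is the ``if and only if'' clause in \eqref{CyclicAbove} when $k=1$, where $p^{k-1}=\frac{p^k-1}{p-1}$ and the distinction is vacuous; a one-line remark dispatching $k=1$ would tidy it up.
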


\begin{proof}
By duality it is enough to prove the statements about cyclic subgroups.

\eqref{CyclicNoV} is well known.

\eqref{CyclicAbove}
The statement is clear if $k=1$, so suppose that $k>1$.
Let $S=\{a\in A:a^p=1\}$.
Assume that $B$ is contained in a cyclic subgroup $C=\GEN{c}$ with $[C:B]=p$ and let $H$ be the unique subgroup of $C$ of order $p$.
Then $S=H\times L$ with $L\cong C_p^{k-1}$, $C\cap L=1$, and every cyclic subgroup $D$ of $A$ containing $B$ with $[D:B]=p$ is contained in $C\times L$.
If $B=1$ then $C\times L$ is elementary abelian and contains $\frac{p^k-1}{p-1}$ subgroups of order $p$.
Otherwise every cyclic subgroup of $C\times L$ with $[D:B]=p$ is necessarily of the form $\GEN{cx}$ with $x\in L$.
Furthermore if $\GEN{cx}=\GEN{cy}$ with $x,y\in L$ then $cy=(cx)^n$ for some $n$. Then $n \equiv 1 \mod |c|$ and hence $n \equiv 1 \mod p$, so that $x=y$. Thus the number of such groups equals $p^{k-1}$.

\eqref{CyclicCocyclicOrder}
The number of elements of order $p^i$ in $A$ is equal to
\[|C_{p^i}^{L_i}\times \prod_{j<i} C_{p^j}^{l_j}|-|C_{p^{i-1}}^{L_i}\times \prod_{j<i} C_{p^j}^{l_j}|=(p^{L_i}-1)p^{L_i(i-1)+\sum_{j<i} l_jj}.\]
So the number of cyclic  subgroups of order $p^i$ is
\[\frac{(p^{L_i}-1)p^{L_i(i-1)+\sum_{j<i} l_jj}}{\varphi(p^i)}=\frac{p^{L_i}-1}{p-1}p^{(L_i-1)(i-1)+\sum_{j<i} l_jj}.\]

\eqref{CyclicNoMaxCocyclicNoMin} The number of elements of order $p^i$ generating a non-maximal cyclic subgroup of order $p^i$ is
\[|C_{p^i}^{L_{i+1}}\times \prod_{j\le i} C_{p^{j-1}}^{l_j}| - |C_{p^{i-1}}^{L_{i+1}}\times \prod_{j\le i} C_{p^{j-1}}^{l_j}| = (p^{L_{i+1}}-1)p^{L_{i+1}(i-1)+\sum_{j\le i} l_j(j-1)}.\]
Thus, the number of non-maximal cyclic subgroups of order $p^i$ is
\[\frac{p^{L_{i+1}}-1}{p-1}p^{(L_{i+1}-1)(i-1)+\sum_{j\le i} l_j(j-1)}.\]
\end{proof}

Recall that the socle $\Soc(A)$ of $A$ is the unique minimal subgroup of $A$ containing all the subgroups of $A$ of prime order.
In the remainder of the section $\pi$ is the set of primes dividing the cardinality of $A$ and we assume that $A_p$ is the direct product of exactly $k_p$ non-trivial subgroups.
This is equivalent to 	
	$$\Soc(A)\cong \prod_{p\in \pi} C_p^{k_p}.$$

For every $K\in \Cocyc(A)$ and every prime $p$ let
$$\alpha_{K,p}=1-\left| \left\{ L\in \Cocyc(A_p) : L < K_p \text{ and } [K_p:L]=p\right\}\right|$$
and
$$\alpha_K= \prod_{p\in \pi} \alpha_{K,p}.$$
For example, if $K_p$ is minimal in $\Cocyc(A_p)$ then $\alpha_{K,p}=1$.
Moreover, by Lemma~\ref{CyclicCocyclic}.\eqref{CocyclicBelow},
\begin{equation}\label{alphaA}
\alpha_{A,p} = 1 - \frac{p^{k_p} - 1}{p -1} = p\frac{1-p^{k_p-1}}{p-1}.
\end{equation}
Thus
\begin{equation}\label{AlphaA}
\alpha_A = \prod_{p\in \pi} p\frac{1-p^{k_p-1}}{p-1}.
\end{equation}

If $A_p$ is cyclic and $K_p\ne 1$ then $\alpha_{K,p}=0$. Moreover, if $K_p$ is minimal in $\Cocyc(A_p)$ then $\alpha_{K,p}=1$. In all the other cases, i.e. when
$A_p$ is non-cyclic and $K_p$ is non-minimal in $\Cocyc(A_p)$, $\alpha_{K,p}$ is negative. This follows from Lemma~\ref{CyclicCocyclic}.
Therefore $\alpha_K<0$ if and only if $K_p = 1$ for all cyclic $A_p$ and the number of primes $p$ for which $A_p$ is non-cyclic and $K_p$ is not minimal
in $\Cocyc(A_p)$ is odd.

\begin{lemma}\label{SumAlphas=1}
If $K\in \Cocyc(A)$ then
	$$\sum_{\substack{L\in \Cocyc(A), \\ L \leq K}} \alpha_L=1.$$
\end{lemma}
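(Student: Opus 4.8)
The plan is to prove the identity by a double induction: one induction on the number of primes $p\in\pi$, reducing the claim to the case where $A=A_p$ is a $p$-group, and a second induction on the index $[A:K]$ within that case. First I would observe that, by the product structure $\Cocyc(A)=\prod_{p\in\pi}\Cocyc(A_p)$ and $\alpha_L=\prod_{p\in\pi}\alpha_{L_p}$ together with $\{L\in\Cocyc(A):L\le K\}=\prod_{p\in\pi}\{L_p\in\Cocyc(A_p):L_p\le K_p\}$, the sum $\sum_{L\le K}\alpha_L$ factors as $\prod_{p\in\pi}\bigl(\sum_{L_p\le K_p}\alpha_{L_p}\bigr)$, so it suffices to treat a single prime $p$ and show $\sum_{L\in\Cocyc(A_p),\,L\le K_p}\alpha_{L,p}=1$.

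For the $p$-group case I would induct on $[A_p:K_p]$. The base case $K_p$ minimal cocyclic gives $\alpha_{K_p,p}=1$ and the sum has a single term, so the identity holds. For the inductive step, recall $\alpha_{K,p}=1-c$, where $c=\bigl|\{L\in\Cocyc(A_p):L<K_p,\ [K_p:L]=p\}\bigr|$, and I would split $\sum_{L\le K_p}\alpha_{L,p}=\alpha_{K_p,p}+\sum_{L<K_p}\alpha_{L,p}$. The key combinatorial point is that every cocyclic $L$ strictly below $K_p$ lies below one of the $c$ cocyclic subgroups of $K_p$ of index $p$; applying the dual of Lemma~\ref{CyclicCocyclic}.\eqref{CocyclicNoV}, two distinct index-$p$ cocyclic subgroups of $K_p$ meet in a subgroup that is \emph{not} cocyclic (since it sits below two distinct subgroups of the same order, contradicting \eqref{CocyclicNoV} unless that intersection is not cocyclic at all, i.e. it has no cocyclic subgroup realizing it — here I must be careful: the precise statement to use is that a cocyclic $L<K_p$ with $[K_p:L]>p$ is contained in a \emph{unique} index-$p$ cocyclic subgroup of $K_p$, because $K_p/L$ is cyclic and hence has a unique subgroup of index $p$, which pulls back to the unique index-$p$ cocyclic subgroup above $L$ inside $K_p$). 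Hence the cocyclic subgroups strictly below $K_p$ are partitioned according to which of the $c$ index-$p$ cocyclic subgroups $M_1,\dots,M_c$ of $K_p$ they lie in, and each $M_i$ is itself cocyclic with $[A_p:M_i]<[A_p:K_p]$, so by the inductive hypothesis $\sum_{L\le M_i}\alpha_{L,p}=1$. Therefore $\sum_{L<K_p}\alpha_{L,p}=\sum_{i=1}^c\sum_{L\le M_i}\alpha_{L,p}=c$, and adding $\alpha_{K_p,p}=1-c$ gives the total $1$.

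The main obstacle is verifying that the map sending a proper cocyclic $L<K_p$ to ``the unique index-$p$ cocyclic subgroup of $K_p$ containing it'' is well-defined and that the resulting fibers are exactly the intervals $[L\le M_i]$ inside $\Cocyc(A_p)$ — in other words, ruling out double-counting and making sure no cocyclic $L$ below $K_p$ fails to be below some $M_i$. This is precisely where Lemma~\ref{CyclicCocyclic}.\eqref{CocyclicNoV} and the uniqueness of a maximal subgroup of a cyclic quotient $K_p/L$ are used: uniqueness of the index-$p$ subgroup of the cyclic group $K_p/L$ forces $L$ into a single $M_i$, so the fibers are disjoint, and every $L<K_p$ with $[K_p:L]=p$ \emph{is} one of the $M_i$, handling the remaining case. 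Once this bookkeeping is pinned down the rest is the elementary arithmetic above; the whole argument is self-dual, so phrasing it on the cocyclic side (as I did) or translating via $\Psi$ to cyclic subgroups of $A_p$ are equivalent, and I would simply cite the duality established before Lemma~\ref{CyclicCocyclic} if the cyclic formulation reads more cleanly.
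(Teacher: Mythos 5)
Your argument is essentially identical to the paper's: reduce to the $p$-group case via the product decomposition $\Cocyc(A)\cong\prod_p\Cocyc(A_p)$ and $\alpha_L=\prod_p\alpha_{L_p}$, then induct, using Lemma~\ref{CyclicCocyclic}.\eqref{CocyclicNoV} to ensure each proper cocyclic subgroup of $K$ lies below exactly one of the index-$p$ cocyclic subgroups $M_1,\dots,M_c$ of $K$, which gives $\sum_{L\le K}\alpha_L=(1-c)+c$. One small slip: if $M_i\subsetneq K_p$ then $[A_p:M_i]>[A_p:K_p]$, not $<$, so either phrase the induction on $|K_p|$ ascending (as the paper does) or on $[A_p:K_p]$ descending from the base case of minimal cocyclic subgroups; this is cosmetic and does not affect the argument.
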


\begin{proof}
We start proving the lemma for the case where $A$ is a $p$-group. In this case $\alpha_K = \alpha_{K,p}$ and the lemma is clear if $K$ is minimal in $A$.
Suppose that $K$ is a non-minimal element of $\Cocyc(A)$. Arguing by induction on the cardinality of $K$ we may assume that the lemma holds if $K$ is replaced by any cocyclic subgroup of $A$ properly contained in $K$. Let $L_1,\dots,L_m$ be the cocyclic subgroups $L$ of $A$ with $[K:L]=p$. Thus $\alpha_K=1-m$.
By Lemma~\ref{CyclicCocyclic}.\eqref{CocyclicNoV}, for every cocyclic subgroup $L$ of $A$ properly contained in $K$ there is a unique $i=1,\dots,m$ such that $L \leq L_i$. Thus
$$\sum_{\substack{L\in \Cocyc(A), \\ L \leq K}} \alpha_L=\alpha_K + \sum_{i=1}^m \sum_{\substack{L\in \Cocyc(A), \\ L \leq L_i}} \alpha_L = \alpha_K+m=1.$$
This proves Lemma~\ref{SumAlphas=1} for $A$ a $p$-group.

Assume now that $A$ is arbitrary abelian. The map $L\mapsto (L_p)_p$ defines a bijection from $\Cocyc(A)$ to $\prod_p \Cocyc(A_p)$ and $\alpha_{L,p}=\alpha_{L_p}$ for every $L\in \Cocyc(A)$ and every prime $p$ (where $\alpha_{L_p}$ is defined relative to $A_p$).
Thus
$$\sum_{\substack{L\in \Cocyc(A), \\ L \leq K}} \alpha_L = \sum_{\substack{L\in \Cocyc(A), \\ L \leq K}} \prod_p \alpha_{L,p} = \prod_p \sum_{\substack{L\in \Cocyc(A_p), \\ L \leq K_p}} \alpha_{L} = 1,$$
as desired. This finishes the proof.
\end{proof}

Let
    \begin{eqnarray*}
    \Cocyc^+(A)&=&\{K\in \Cocyc(A) : \alpha_K>0\}\\
    \Cocyc^-(A)&=&\{K\in \Cocyc(A) : \alpha_K<0\}.
    \end{eqnarray*}
Observe that $K\in \Cocyc^+(A)\cup \Cocyc^-(A)$ if and only if $K_p=1$ for every $p$ with $A_p$ cyclic and, in that case,
$K\in \Cocyc^+(A)$ (respectively, $K\in \Cocyc^-(A)$) if and only if the number of primes for which $A_p$ is non-cyclic and $K_p$ is non-minimal is even (respectively, odd).
Set
    $$m^+_A = \sum_{K\in \Cocyc^+(A)} \alpha_K[A:K] \qand m^-_A = \quad -\smashoperator[l]{\sum_{K\in \Cocyc^-(A)}} \alpha_K[A:K]$$
%We also define
%    $$n_A = \begin{cases}
%    m^-_A, & \text{if } A\in \Cocyc^-(A); \\ m^-_A-\alpha_A, & \text{otherwise}.
%    \end{cases}$$
We also define
\begin{equation}\label{mADefinition}
m_A = \begin{cases}
m^-_A, & \text{if } A\in \Cocyc^-(A); \\ m^-_A-\alpha_A, & \text{otherwise}.
\end{cases}
\end{equation}
Clearly $m^+_A>0$, $m^-_A \ge m_A $ and $m^-_A=0$ if and only if $A$ is cyclic. Moreover by Lemma~\ref{SumAlphas=1} we have
\[m_A > - \alpha_A  -\smashoperator[l]{\sum_{K \in \Cocyc^-(A)}}\alpha_K = -1 + \sum_{K \in \Cocyc^+(A)} \alpha_K \]
and in particular $m_A \geq 0$.

\begin{notation}\label{PlusMinusDeltaNotation}
Given a set $S$ we denote by $S^-$ (respectively, $S^+$) the set of subsets of $S$ of odd (respectively, even) cardinality. Moreover, for each subset $X$ of $S$ let $\Delta_X:S\rightarrow \{0,1\}$ denote the characteristic function of $X$.
\end{notation}

\begin{lemma}\label{msGeneral}
The following equalities hold.
	\begin{eqnarray*}
		m^+_A&=&\frac{|A|}{\prod_{p\in \pi} (p-1)p^{k_p-1}} \sum_{X\in \pi^+} \prod_{p\in \pi} (p^{k_p-\Delta_X(p)}-1), \\
		m^-_A&=&\frac{|A|}{\prod_{p\in \pi} (p-1)p^{k_p-1}} \sum_{X\in \pi^-} \prod_{p\in \pi} (p^{k_p-\Delta_X(p)}-1)
	\end{eqnarray*}
\end{lemma}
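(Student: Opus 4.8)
The plan is to reduce the two identities to the case where $A$ has prime power order, to settle that case by a count of generators of cyclic subgroups, and then to recombine the contributions of the primes in $\pi$. For the reduction, recall that $L\mapsto(L_p)_{p\in\pi}$ is a bijection $\Cocyc(A)\to\prod_{p\in\pi}\Cocyc(A_p)$ with $\alpha_{L,p}=\alpha_{L_p}$, so that $\alpha_L=\prod_p\alpha_{L,p}$ and $[A:L]=\prod_p[A_p:L_p]$. Since $\alpha_L\neq0$ exactly when no $\alpha_{L,p}$ vanishes, and then the sign of $\alpha_L$ is $(-1)^{|X_L|}$ with $X_L:=\{p:\alpha_{L,p}<0\}=\{p:L_p\in\Cocyc^-(A_p)\}$, the subgroup $L$ lies in $\Cocyc^{+}(A)$ (resp. $\Cocyc^{-}(A)$) precisely when every $\alpha_{L,p}\neq0$ and $|X_L|$ is even (resp. odd). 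Sorting $m^+_A=\sum_{L\in\Cocyc^+(A)}\alpha_L[A:L]$ according to the subset $X_L\subseteq\pi$ and using the multiplicativity of $\alpha_L[A:L]$ gives
\[ m^+_A=\sum_{X\in\pi^+}\ \prod_{p\in X}\Bigl(\sum_{L\in\Cocyc^-(A_p)}\alpha_L[A_p:L]\Bigr)\ \prod_{p\in\pi\setminus X}\Bigl(\sum_{L\in\Cocyc^+(A_p)}\alpha_L[A_p:L]\Bigr)=\sum_{X\in\pi^+}(-1)^{|X|}\prod_{p\in X}m^-_{A_p}\prod_{p\in\pi\setminus X}m^+_{A_p}, \]
which equals $\sum_{X\in\pi^+}\prod_{p\in X}m^-_{A_p}\prod_{p\notin X}m^+_{A_p}$ as $|X|$ is even; the same argument over odd subsets $X$ yields $m^-_A=\sum_{X\in\pi^-}\prod_{p\in X}m^-_{A_p}\prod_{p\notin X}m^+_{A_p}$.

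It remains to prove that for every nontrivial finite abelian $p$-group $P$ with $|\Soc(P)|=p^k$ one has $m^+_P=\frac{|P|(p^k-1)}{(p-1)p^{k-1}}$ and $m^-_P=\frac{|P|(p^{k-1}-1)}{(p-1)p^{k-1}}$. By the duality of Section~\ref{SectionLatticeCalculations}, the cocyclic subgroups $L$ of $P$ correspond bijectively to the cyclic subgroups $C$ of $P$ with $[P:L]=|C|$, minimal cocyclic subgroups go to maximal cyclic subgroups, and $\alpha_{L,p}$ becomes $1-|\{C'\in\Cyc(P):C<C',\ [C':C]=p\}|$. In particular $\Cocyc^+(P)$ is the set of minimal cocyclic subgroups, on which $\alpha_{L,p}=1$, so $m^+_P=\sum_C|C|$ with $C$ ranging over the maximal cyclic subgroups of $P$. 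Since $\varphi(|C|)$ counts the generators of $C$, each $x\in P$ generates exactly one cyclic subgroup, and $|C|=\tfrac{p}{p-1}\varphi(|C|)$ whenever $|C|>1$, this rewrites as $m^+_P=\tfrac{p}{p-1}\bigl|\{x\in P:\langle x\rangle\ \text{is maximal cyclic}\}\bigr|$. Now $\langle x\rangle$ fails to be maximal cyclic precisely when $x$ lies in the subgroup $P^{[p]}:=\{y^p:y\in P\}$: if $x=y^p$ then $\langle x\rangle=\langle y^p\rangle\subsetneq\langle y\rangle$ (or $\langle x\rangle=1$, still non-maximal since $P\neq1$), while conversely $\langle x\rangle\subsetneq\langle c\rangle$ forces $x=c^m$ with $p\mid m$, so $x=(c^{m/p})^p$. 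As $P$ is abelian, $y\mapsto y^p$ has kernel $\Soc(P)$, hence $|P^{[p]}|=|P|/p^k$ and $m^+_P=\tfrac{p}{p-1}\bigl(|P|-|P|/p^k\bigr)=\frac{|P|(p^k-1)}{(p-1)p^{k-1}}$.

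For $m^-_P$ I would exploit the identity $m^+_P-m^-_P=\sum_{L\in\Cocyc(P)}\alpha_{L,p}[P:L]$: expanding $\alpha_{L,p}$ and interchanging the two summations, each cocyclic $L'\neq P$ is covered by a unique cocyclic $L$ with $L'<L$ and $[L:L']=p$ (uniqueness since the cyclic subgroup matched to $L'$ has a single subgroup of index $p$), whence $\sum_L\alpha_{L,p}[P:L]=S-\tfrac1p(S-1)$ with $S:=\sum_{L\in\Cocyc(P)}[P:L]=\sum_{C\in\Cyc(P)}|C|$. The same generator count gives $S=1+(|P|-1)\tfrac{p}{p-1}=\tfrac{p|P|-1}{p-1}$, so $m^+_P-m^-_P=|P|$ and $m^-_P=m^+_P-|P|=\frac{|P|(p^{k-1}-1)}{(p-1)p^{k-1}}$. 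Finally, inserting these prime power formulas into the products of the first step, each factor carries a $\frac{|A_p|}{(p-1)p^{k_p-1}}$; pulling these out produces the prefactor $\frac{|A|}{\prod_{p\in\pi}(p-1)p^{k_p-1}}$, while the residual factor at a prime $p$ is $p^{k_p}-1$ if $p\notin X$ and $p^{k_p-1}-1$ if $p\in X$, that is $p^{k_p-\Delta_X(p)}-1$; this gives exactly the two displayed identities.

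I expect the main obstacle to be the prime power computation of $m^+_P$. What makes it tractable is the combination of the lattice duality of Section~\ref{SectionLatticeCalculations}, which converts minimal cocyclic subgroups into maximal cyclic ones, with the clean criterion that $\langle x\rangle$ is non-maximal cyclic exactly when $x$ is a $p$-th power, so that the weighted count of maximal cyclic subgroups collapses to $|P^{[p]}|=|P|/|\Soc(P)|$; once $m^+_P$ is in hand, $m^-_P$ follows formally from $m^+_P-m^-_P=|P|$.
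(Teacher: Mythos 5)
Your proof is correct, but it takes a genuinely different route from the paper's, especially in the prime-power case. The paper argues by induction on $|\pi|$: the base case (a $p$-group) is handled by plugging the explicit counting formulas of Lemma~\ref{CyclicCocyclic}.(\ref{CyclicCocyclicOrder}) and (\ref{CyclicNoMaxCocyclicNoMin}) into the definitions of $m^\pm_A$ and carrying out a somewhat lengthy telescoping computation, and the induction step uses the identities $m^+_{B\times C}=m^+_Bm^+_C+m^-_Bm^-_C$, $m^-_{B\times C}=m^+_Bm^-_C+m^-_Bm^+_C$ for coprime Hall factors. Your reduction to the $p$-group case is essentially the closed form of that induction step (sorting by $X_L=\{p:L_p\in\Cocyc^-(A_p)\}$), so there is no real difference there. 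What is different, and a genuine improvement in transparency, is your treatment of the $p$-group case: you never touch the counting formulas of Lemma~\ref{CyclicCocyclic}.(\ref{CyclicCocyclicOrder})--(\ref{CyclicNoMaxCocyclicNoMin}). Instead you pass through the cyclic/cocyclic duality, identify $m^+_P$ with the total order of maximal cyclic subgroups, convert that to a generator count, and use that $\langle x\rangle$ is non-maximal cyclic iff $x$ is a $p$-th power, so $m^+_P=\tfrac{p}{p-1}\bigl(|P|-|P|/p^k\bigr)$. You then obtain $m^-_P$ formally from $m^+_P-m^-_P=|P|$, which you prove by a clean interchange-of-summation argument (each cocyclic $L'\neq P$ is covered by a unique cocyclic overgroup of index $p$). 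Both routes give the same prime-power formulae; yours is shorter, avoids the telescoping, and exposes the underlying duality and double-count more clearly, while the paper's relies on counting lemmas that it anyway needs elsewhere. The identity $m^+_P-m^-_P=|P|$ you derive is also implicit in the paper (evaluate the display preceding \eqref{Delta_a} at the identity), but that display is proved after this lemma, so your independent derivation is the right move and avoids circularity.
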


\begin{proof}
We argue by induction on the cardinality of $\pi$.
Suppose first that $\pi=\{p\}$, i.e. $A$ is a $p$-group.
In this case we adopt the notation of Lemma~\ref{CyclicCocyclic}.
In particular the exponent of $A$ is $p^e$ and $A$ is the product of $k$ non-trivial cyclic subgroups.
Hence $K\in \Cocyc^-(A)$ if and only if $k>1$ and $K$ is a non-minimal cocyclic subgroup of $A$. In that case, if $K \neq A$ then $\alpha_K=1-p^{k-1}$. Furthermore, $\alpha_A=p\frac{1-p^{k-1}}{p-1}$, by \eqref{alphaA}.

Using Lemma~\ref{CyclicCocyclic}.\eqref{CyclicCocyclicOrder} we have
$$m^-_A = p\frac{p^{k-1}-1}{p-1}+ (p^{k-1}-1)\sum_{i=1}^{e-1} p^i \frac{p^{L_{i+1}}-1}{p-1}p^{(L_{i+1}-1)(i-1)+\sum_{j\le i} l_j(j-1)}. $$
Hence
\begin{eqnarray*}
	\frac{p-1}{p^{k-1}-1}m^-_A-p & = &
	\sum_{i=1}^{e-1} (p^{L_{i+1}}-1)p^{i+(L_{i+1}-1)(i-1)+\sum_{j\le i} l_j(j-1)} \\
	& = & p\sum_{i=1}^{e-1} (p^{L_{i+1}}-1)p^{L_{i+1}(i-1)+\sum_{j\le i} l_j(j-1)} \\
	& = & p\left(\sum_{i=1}^{e-1} p^{L_{i+1}i+\sum_{j\le i} l_j(j-1)}-\sum_{i=1}^{e-1} p^{L_{i+1}(i-1)+\sum_{j\le i} l_j(j-1)}\right) \\
	& = & p\left(\sum_{i=2}^e p^{L_i(i-1)+\sum_{j\le i-1} l_j(j-1)}-
	\sum_{i=1}^{e-1} p^{L_{i+1}(i-1)+\sum_{j\le i} l_j(j-1)}\right) \\	
	& = & p\left(p^{\sum_{j\le e} l_j(j-1)}-1\right) = \frac{|A|}{p^{k-1}}-p.
\end{eqnarray*}
For the last equality note that $|A| = p^{\sum_{j\le e} l_jj}$ and $p^k = p^{\sum_{j\le e} l_j}$.
Thus
$$m_A^-=\frac{|A|}{(p-1)p^{k-1}} (p^{k-1}-1)$$
which is the desired equality because $\{p\}$ is the only element of $\pi^-$.

We argue similarly to calculate $m^+_A$.
For that we first calculate, using Lemma~\ref{CyclicCocyclic}, the number of minimal cocyclic subgroups of index $p^i$ in $A$, which is
\begin{eqnarray*}
	n_i&=&
	\frac{p^{L_i}-1}{p-1}p^{(L_i-1)(i-1)+\sum_{j<i} l_jj}-\frac{p^{L_{i+1}}-1}{p-1}p^{(L_i-1)(i-1)+\sum_{j< i} l_j(j-1)} \\
	&=& \frac{1}{p-1}\left(p^{L_i+\sum_{j< i} l_j}-p^{\sum_{j< i} l_j}-p^{L_{i+1}}+1 \right) p^{(L_i-1)(i-1)+\sum_{j< i} l_j(j-1)}.
\end{eqnarray*}
Then
\begin{eqnarray*}
	\frac{p-1}{p}m^+_A &=& (p-1)\sum_{i=1}^e n_i p^{i-1} \\
	&=& \sum_{i=1}^e \left(p^{L_i+\sum_{j< i} l_j}-p^{\sum_{j< i} l_j}-p^{L_{i+1}}+1\right) p^{L_i(i-1)+\sum_{j< i} l_j(j-1)}
\end{eqnarray*}
\begin{eqnarray*}
	&=&
	\sum_{i=1}^e p^{L_ii+\sum_{j< i} l_jj}
	-\sum_{i=1}^e p^{L_i(i-1)+\sum_{j< i} l_jj} \\
	&&
	-\sum_{i=1}^e p^{L_{i+1}i +\sum_{j< i+1} l_j(j-1)}+
	\sum_{i=1}^e p^{L_i(i-1)+\sum_{j< i} l_j(j-1)}.
\end{eqnarray*}
As, for $i=2,\dots,e$, the $(i-1)$-th summand of the first (respectively, third) sum cancels the $i$-th summand of the second (respectively, fourth) sum, we obtain
$$\frac{p-1}{p} m^+_A = p^{L_ee+\sum_{j< e} l_jj}-1-p^{\sum_{j< e+1} l_j(j-1)}+1=|A|-\frac{|A|}{p^k}.$$
Thus $m^+_A=\frac{|A|}{p^{k-1}} \; \frac{p^k-1}{p-1}$, which is the desired equality because $\pi^+=\{\emptyset\}$.
This finishes the first step of the induction argument.

For the induction step observe that for $A=B\times C$, where $B$ and $C$ are Hall subgroups of $A$, we have
\begin{equation}\label{msCoprime}
m^+_{A}=m^+_Bm^+_C+m^-_B m^-_C \qand m^-_A=m^+_Bm^-_C+m^-_Bm^+_C.
\end{equation}
This follows easily by observing that $(K,L)\mapsto K\times L$ defines a bijection $\Cocyc(B)\times \Cocyc(C) \rightarrow \Cocyc(A)$. Moreover for every $K\in \Cocyc(B)$ and $L\in \Cocyc(C)$ we have $[A:K\times L]=[B:K][C:L]$, $\alpha_{K\times L} = \alpha_K \alpha_L$ and $K \times L \in \Cocyc^+(A)$ if and only if $(K,L)\in (\Cocyc^+(B)\times \Cocyc^+(C))\cup (\Cocyc^-(B)\times \Cocyc^-(C))$.
\end{proof}

Recall that the set $A^*$ of linear characters of $A$ is an orthonormal basis of the maps $A\rightarrow \C$ with respect to the hermitian product $\GEN{-,-}_A$. Thus, for every class function $\psi$ of $A$ we have
	$$\psi=\sum_{\varphi\in A^*} \GEN{\varphi,\psi}_A \varphi.$$
For each $K\in \Cocyc(A)$ we fix a linear character $\varphi_K$ of $A$ with kernel $K$ and let $\psi_K$ denote the sum of the linear characters of $A$ with kernel $K$. Moreover, for a positive integer $k$ denote by $\zeta_k$ a complex primitive $k$-th root of unity.
Then
	$$A^* = \{\sigma \circ \phi_K : K\in \Cocyc(A), \sigma\in \Gal(\Q(\zeta_{[A:K]})/\Q) \}$$
and
	$$\psi_K = \sum_{\sigma\in \Gal(\Q(\zeta_{[A:K]})/\Q)} \sigma\circ \varphi_K.$$
Recall that $\Delta_B$ denotes the characteristic function of $B$, which now is going to be used for subsets of $A$. Then the regular representation $\rho$ of $A$ satisfies
	\begin{equation}\label{Regularpsi}
	\rho=|A|\Delta_{\{1\}} = \sum_{K\in \Cocyc(A)} \psi_K.
	\end{equation}

If $B$ is a subgroup of $A$ then
$$\GEN{\varphi_K,\Delta_B}_A  = \frac{1}{|A|}\sum_{b\in B} \varphi_K(b) = \begin{cases} \frac{1}{[A:B]}, & \text{if } B \leq K; \\ 0, & \text{otherwise} \end{cases}$$
Therefore
	\begin{equation}\label{Deltapsi}
	[A:B]\;\Delta_B = \sum_{K/B\in \Cocyc(A/B)} \psi_K.
	\end{equation}

Using \eqref{Regularpsi}, \eqref{Deltapsi} and Lemma~\ref{SumAlphas=1} we deduce
	\begin{eqnarray*}
	\sum_{K\in \Cocyc(A)} \alpha_K[A:K] \Delta_K &=& \sum_{K\in \Cocyc(A)} \alpha_K \sum_{L\in \Cocyc(A), K \leq L} \psi_L \\
	&=& \sum_{L\in \Cocyc(A)} \left(\sum_{K\in \Cocyc(A),K \leq L} \alpha_K\right) \psi_L=\rho.
	\end{eqnarray*}
So
\begin{equation}\label{Delta_a}
\sum_{K\in \Cocyc(A)} \alpha_K[A:K] \Delta_{aK} = |A|\Delta_{ \{a\} }.
\end{equation}

For $a\in A$ let
\begin{equation}\label{Deff}
f_a = \sum_{K\in \Cocyc^+(A)\setminus \{A\}} \alpha_K [A:K]\Delta_{aK} - \sum_{K\in \Cocyc^-(A)\cup \{A\}} \alpha_K [A:K] \Delta_{A\setminus aK}.
\end{equation}
Using $\Delta_{A\setminus aK} = \Delta_A - \Delta_{aK}$ and \eqref{Delta_a} we obtain
	\begin{eqnarray*}
	f_a 	&=& \sum_{K\in \Cocyc(A)} \alpha_K [A:K]\Delta_{aK} - \sum_{K\in \Cocyc^-(A)\cup \{A\}} \alpha_K [A:K] \Delta_{A} \\
&=& |A|\Delta_{ \{a \}} + m_A\Delta_A.
	\end{eqnarray*}
Thus
\begin{equation}\label{f_a}
	f_a(x) = \begin{cases}|A|+m_A, & \text{if } x=a; \\ m_A, & \text{otherwise}.\end{cases}
\end{equation}

\section{Applications}\label{SectionApplications}

\subsection{Positive results on Sehgal's Problem}

Our contribution to Sehgal's Problem is a consequence of Corollary~\ref{InequalityCocyclicNAbelian}, \eqref{anSums} and the formulae obtained in Section~\ref{SectionLatticeCalculations}.

In this subsection $N$ is a normal nilpotent subgroup of a finite group $G$ and $u$ a torsion element of $\V(\Z G, N)$.
We also use Notation~\ref{PlusMinusDeltaNotation}.

\begin{lemma}\label{Bound}
Suppose that $N=A \times B$ where $A$ is an abelian Hall subgroup of $N$ and $B$ has at most one non-cyclic Sylow subgroup.

If $b\in B$ then
	\begin{equation}
	\sum_{a\in A} |\Cen_G(ab)| \; \pa{u}{ab} \ge  0, \label{SumPositive}
    \end{equation}

If $n \in N$ and $\pi$ is the set of prime divisors of $|A|$ then the following inequalities hold where $m_A$ is defined in \eqref{mADefinition}:
    \begin{equation}
	|A| \; |\Cen_G(n)|\; \pa{u}{n} + m_A \sum_{a \in A} |\Cen_G(an_{\pi'})| \; \pa{u}{an_{\pi'}} \ge 0 \label{Sumq},
    \end{equation}
    \begin{equation}
	m_A \; [\Cen_G(n_{\pi'}):C_G(n)] + |A| \; \pa{u}{n}  \ge 0. \label{IndexPlusq}
	\end{equation}
\end{lemma}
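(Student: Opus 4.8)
The plan is to derive all three inequalities from Corollary~\ref{InequalityCocyclicNAbelian}, from the element-sum reformulation $\sum_{n\in N}|C_G(n)|\,\pa{u}{n}=|G|$ of \eqref{anSums}, and from the function $f_a$ of \eqref{f_a}, the whole thing resting on the following \emph{coset inequality}, which I will single out as $(\ast)$: for every $K\in\Cocyc(A)$, every $a\in A$ and every $b\in B$,
\[\sum_{x\in aK}|C_G(xb)|\,\pa{u}{xb}\ \ge\ 0.\]
Granting $(\ast)$, the rest is quick. Inequality \eqref{SumPositive} is just $(\ast)$ with $K=A$. For \eqref{Sumq} I would apply $f_{n_\pi}$ to the function $x\mapsto|C_G(xn_{\pi'})|\,\pa{u}{xn_{\pi'}}$ on $A$: by \eqref{f_a} the sum $\sum_{x\in A}f_{n_\pi}(x)\,|C_G(xn_{\pi'})|\,\pa{u}{xn_{\pi'}}$ is exactly the left-hand side of \eqref{Sumq}, while by \eqref{Deff} it is a combination, with the positive coefficients $\alpha_K[A:K]$ for $K\in\Cocyc^+(A)\setminus\{A\}$ and $-\alpha_K[A:K]$ for $K\in\Cocyc^-(A)$, of the coset sums $\sum_{x\in n_\pi K}|C_G(xn_{\pi'})|\,\pa{u}{xn_{\pi'}}$ (which are $\ge0$ by $(\ast)$) and of the complementary sums $\sum_{x\in A\setminus n_\pi K}|C_G(xn_{\pi'})|\,\pa{u}{xn_{\pi'}}$ (which are $\ge0$ because $A\setminus n_\pi K$ is a disjoint union of $K$-cosets, each handled by $(\ast)$; the term $K=A$ drops out since $A\setminus n_\pi A=\emptyset$). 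Finally, for \eqref{IndexPlusq} I may assume $\pa{u}{n}\neq0$ (otherwise it is trivial, as $m_A\ge0$); then $n\in\ell_{\Q G}(u)$, hence $n_{\pi'}\in\ell_{\Q G}(u_{\pi'})$, and the $\pi'$-local analogue of \eqref{anSums} — obtained by the same degree computation applied to the Cliff--Weiss character attached to an element of $\ell_{\Q G}(u_{\pi'})$, where the abelianness of $A$ collapses all the centralizers $C_N(ax)$, $a\in A$, to one value — gives $\sum_{a\in A}|C_G(an_{\pi'})|\,\pa{u}{an_{\pi'}}=|C_G(n_{\pi'})|$; substituting this into \eqref{Sumq} and dividing by $|C_G(n)|$ yields $|A|\,\pa{u}{n}\ge -m_A\,[C_G(n_{\pi'}):C_G(n)]$, i.e. \eqref{IndexPlusq}.

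It remains to prove $(\ast)$, which is where the argument really lives. I may assume $A\neq1$. Choose a prime $q$ dividing $|B|$: the one at which $B$ is non-cyclic if $B$ is non-abelian, any one otherwise; if $B=1$ then $(\ast)$ is the known positive solution of Sehgal's Problem for $G$ and $B$ (Theorem~\ref{MatrixProblemTh}). If the $q$-part $b_q$ of $b$ does not lie in $\ell_{\Q G}(u_q)$, then every $\pa{u}{xb}$ with $x\in A$ vanishes by Theorem~\ref{HertweckPAdic}(2) (the $q$-part of $xb$ is $b_q$, which would have to be conjugate to, hence rationally conjugate to, $u_q$), so $(\ast)$ is trivial; thus assume $b_q\in\ell_{\Q G}(u_q)$. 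If $B$ is non-abelian, then $B_{q'}$ is cyclic and $N_{q'}=A\times B_{q'}$ is abelian; I apply Corollary~\ref{InequalityCocyclicNAbelian} with this $q$, with $\x=b_q$, with the cocyclic subgroup $K\le A\le N_{q'}$ — note $N_{q'}/K\cong(A/K)\times B_{q'}$ is cyclic, its two factors being cyclic of coprime order — and with $n=ab_{q'}$; since $nK=(aK)b_{q'}$ and $(yb_{q'})b_q=yb$, its conclusion is precisely $(\ast)$. If $B$ is abelian (in particular if $B$ is cyclic), I fix any prime $p\in\pi$ and write $A=A_p\times A_{p'}$, $K=K_p\times K_{p'}$, $a=a_pa_{p'}$; for each $x_p\in a_pK_p$ either $x_p\notin\ell_{\Q G}(u_p)$, so that every $\pa{u}{x_px_{p'}b}=0$ (their $p$-part is $x_p$), or $x_p\in\ell_{\Q G}(u_p)$ and Corollary~\ref{InequalityCocyclicNAbelian}, applied with $p$, with $\x=x_p$, with $K_{p'}\le A_{p'}\le N_{p'}=A_{p'}\times B$ — note $N_{p'}/K_{p'}\cong(A_{p'}/K_{p'})\times B$ is cyclic — and with $n=a_{p'}b$, gives $\sum_{z\in a_{p'}K_{p'}}|C_G(x_pzb)|\,\pa{u}{x_pzb}\ge0$; summing over $x_p\in a_pK_p$ and using $(a_pK_p)(a_{p'}K_{p'})=aK$ gives $(\ast)$.

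The main obstacle is exactly the bookkeeping in $(\ast)$: one must pick the normal subgroup of $N_{p'}$ entering Corollary~\ref{InequalityCocyclicNAbelian} so that its quotient is cyclic \emph{and} the resulting inequality is a sum over one coset inside $A$ with the $B$-coordinate frozen at $b$. This is precisely what the splitting $N=A\times B$ together with the hypothesis that $B$ has at most one non-cyclic Sylow subgroup makes possible, and it is the source of the asymmetry between $A$ and $B$. A secondary point that needs care is the $\pi'$-local form of \eqref{anSums} used for \eqref{IndexPlusq}: it is obtained by running the restriction of $\chi$ in the proof of Theorem~\ref{Inequalities} with respect to all the primes dividing $|N_{\pi'}|$ simultaneously, equivalently by iterating that construction prime by prime, the abelianness of $A$ being what makes all the occurring centralizer indices reduce to $|C_G(ax)|/|C_N(x)|$.
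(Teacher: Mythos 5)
Your overall strategy is the same as the paper's: reduce everything to a family of coset inequalities (your $(\ast)$, which is what the paper extracts from Corollary~\ref{InequalityCocyclicNAbelian} in display~\eqref{InequalityAAbelian}), combine them via $f_{n_\pi}$ and Section~\ref{SectionLatticeCalculations} to get \eqref{Sumq}, and then compare \eqref{Sumq} with a bound on $\sum_{a\in A}|\Cen_G(an_{\pi'})|\,\pa{u}{an_{\pi'}}$ coming from \eqref{anSums}. Two of your steps, however, do not hold as written.

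The serious gap is in the ``$\pi'$-local analogue of \eqref{anSums}''. You assert the \emph{equality} $\sum_{a\in A}|\Cen_G(an_{\pi'})|\,\pa{u}{an_{\pi'}}=|\Cen_G(n_{\pi'})|$. Unwinding it, this says that every $m$ with $\pa{u}{m}\ne0$ has $m_{\pi'}\in n_{\pi'}^G$, which Theorem~\ref{HertweckPAdic} does not give: local conjugacy fixes each $m_p^G$, but when $\pi'$ contains more than one prime this does not determine $m_{\pi'}^G$. A degree count on Cliff--Weiss characters only gives the sum $\sum_{y^{B},\,y\in\ell_{\Q G}(u_{\pi'})}\deg\chi_y=|G|/|B|$ and does not isolate the single term $y=n_{\pi'}$; asserting the equality is essentially assuming the conclusion, since it holds precisely when the whole partial augmentation mass sits on one $G$-class. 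What is true, and all that is needed, is the one-sided bound $\sum_{a\in A}|\Cen_G(an_{\pi'})|\,\pa{u}{an_{\pi'}}\le|\Cen_G(n_{\pi'})|$: decompose the sum in \eqref{anSums} according to the $B$-conjugacy class of the $\pi'$-part, discard the non-negative contributions of $b^B\not\subseteq n_{\pi'}^G$ (non-negative by \eqref{SumPositive}), and observe that each remaining $b^B$ gives the same inner sum $\sum_{a\in A} [\Cen_G(ab):\Cen_N(ab)]\,\pa{u}{ab}$, there being exactly $|n_{\pi'}^G|/|n_{\pi'}^N|$ of them. Since $m_A\ge0$, this inequality substituted into \eqref{Sumq} already yields \eqref{IndexPlusq}.

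The second, smaller gap is in your case split for $(\ast)$. In your ``abelian $B$'' branch you pick $p\in\pi$ and need $N_{p'}/K_{p'}\cong(A_{p'}/K_{p'})\times B$ to be cyclic, which requires $B$ \emph{cyclic}, not merely abelian; the argument breaks for $B\cong C_q\times C_q$, say. The correct dichotomy is not abelian versus non-abelian but on $B_{q'}$ being cyclic: the hypothesis that $B$ has at most one non-cyclic Sylow subgroup produces, whenever $B\ne1$, a prime $q\mid|B|$ with $B_{q'}$ cyclic, and your ``non-abelian'' branch then runs verbatim with this $q$, regardless of whether $B$ is abelian. Your $\pi$-prime branch is only needed (and works) when $B=1$, where $(\ast)$ is just Proposition~\ref{InequalitiesAbelian}.
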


\begin{proof}
By assumption $B_{p'}$ is cyclic for some prime $p$ which will be fixed throughout.
If $b_p\not\in \ell_{\Q G}(u)$ then the left side part of \ref{SumPositive} is zero, by Theorem~\ref{HertweckPAdic}, and hence the inequality holds. Suppose otherwise $b_p\in \ell_{\Q G}(u)$.
Then \eqref{SumPositive} is a consequence of Corollary~\ref{InequalityCocyclicNAbelian} for $\x=b_p$, $K=A$ and $n = b_{p'}$.

Let $\nu$ be the set of prime divisors of $B_{p'}$.
If $K\in \Cocyc(A)$ then $K \in \Cocyc(N_{p'})$ and hence, applying Corollary~\ref{InequalityCocyclicNAbelian} with $\x = n_p$ and $n=cn_{\nu}$ for $c\in A$, we obtain
\begin{equation}\label{InequalityAAbelian}
0 \leq \sum_{m\in cn_{\nu}K} |\Cen_G(mn_p)| \; \pa{u}{mn_p} = \sum_{a \in A} \Delta_{cK}(a) \; |\Cen_G(an_{\pi'})| \; \pa{u}{an_{\pi'}}.
\end{equation}
In particular,
\begin{equation}\label{InequalitynpiA}
\sum_{a\in A} \Delta_{n_{\pi} K}(a) \; |\Cen_G(an_{\pi'})| \; \pa{u}{an_{\pi'}} \ge 0
\end{equation}
and summing the inequalities in \eqref{InequalityAAbelian}, for $c$ running on a set of representatives of the cosets of $A$ modulo $K$ not containing $n_{\pi}$ we obtain
\begin{equation}\label{InequalityComplementnpiA}
\sum_{a\in A} \Delta_{A\setminus n_{\pi} K}(a) \; |\Cen_G(an_{\pi'})| \; \pa{u}{an_{\pi'}} \ge 0.
\end{equation}
We have $\alpha_K > 0$ for $K\in \Cocyc^+(A)$ and $\alpha_K < 0$ for $K\in \Cocyc^-(A)$.
So, using \eqref{f_a}, \eqref{Deff}, \eqref{InequalitynpiA}, \eqref{InequalityComplementnpiA} and that $\Delta_{A \setminus A} = \Delta_\emptyset$ is constantly $0$, we deduce that
\begin{eqnarray*}
&&\hspace{-1cm}|A| \; |\Cen_G(n)|\; \pa{u}{n} + m_A \sum_{a \in A} |\Cen_G(an_{\pi'})| \; \pa{u}{an_{\pi'}}	\\
&&= \sum_{a\in A} f_{n_{\pi}}(a) \; |\Cen_G(an_{\pi'})| \; \pa{u}{an_{\pi'}} \\
&&= \sum_{K\in \Cocyc^+(A) \setminus \{A\}} \alpha_K [A:K] \sum_{a \in A} \Delta_{n_{\pi}K}(a) \; |\Cen_G(an_{\pi'})| \; \pa{u}{an_{\pi'}}  -   \\
&& \sum_{K\in \Cocyc^-(A) \cup \{A\}} \alpha_K [A:K] \sum_{a \in A} \Delta_{A\setminus n_{\pi}K}(a) \; |\Cen_G(an_{\pi'})| \; \pa{u}{an_{\pi'}} \ge 0.
\end{eqnarray*}
This proves \eqref{Sumq}.
	
Finally we prove \eqref{IndexPlusq}.
For this we first observe that using \eqref{anSums} and \eqref{SumPositive} for elements $b\in B\setminus n_{\pi'}^G$ we get
\begin{align*}
[G:N] =& \sum_{m^N, m \in N} [C_G(m):C_N(m)]\pa{u}{m} = \sum_{b^B} \sum_{a \in A} [C_G(ba):C_N(ba)] \pa{u}{ba}  \\
\geq& \frac{|n_{\pi'}^G|}{|n_{\pi'}^N|} \sum_{a \in A} [C_G(n_{\pi'}a):C_N(n_{\pi'}a)]\ \pa{u}{n_{\pi'}a} \\
 =& \frac{[G:N]}{[C_G(n_{\pi'}):C_N(n_{\pi'})]} \sum_{a \in A} [C_G(n_{\pi'}a):C_N(n_{\pi'})]\ \pa{u}{n_{\pi'}a}\\
 =& \frac{[G:N]}{|C_G(n_{\pi'})|} \sum_{a \in A} |C_G(n_{\pi'}a)|\ \pa{u}{n_{\pi'}a}.
\end{align*}
Thus, using \eqref{Sumq}, we get
$$m_A\; |\Cen_G(n_{\pi'})| \geq m_A\; \sum_{a \in A} |\Cen_G(n_{\pi'}a)| \; \pa{u}{n_{\pi'}a} \geq - |A| \; |C_G(n)| \; \pa{u}{n},$$
and hence \eqref{IndexPlusq} follows.
\end{proof}

To state our contribution to Sehgal's Problem we introduce the following notation for an abelian group $A$ where $\pi$ and $k_p$ are as in Section~\ref{SectionLatticeCalculations}, i.e. $\pi$ is the set of primes dividing the order of $A$ and $A_p$ is a direct product of $k_p$ non-trivial cyclic $p$-groups (see Lemma~\ref{msGeneral}):
\[n_A^- = \frac{|A|}{m_A^-} = \frac{\prod_{p \in \pi} (p-1)p^{k_p -1}}{\sum_{X \in \pi^-} \prod_{p \in \pi} \left(p^{k_p - \Delta_X(p)} - 1\right)}\]
and
	\[n_A = \begin{cases} n_A^-, & \text{if } |\pi| \text{ is odd}; \\ \frac{|A|n_A^-}{|A|-n_A^-\alpha_A}, & \text{otherwise}.\end{cases}\]
See \eqref{AlphaA} for the value of $\alpha_A$.
Actually, this only makes sense if $A$ is non-cyclic for otherwise $m_A^-=0$.
In case $A$ is cyclic we set $n_A^-=n_A=+\infty$. Otherwise, i.e. if $A$ is non-cyclic, we have
	$$|A|=n_A^-m_A^- = n_A m_A.$$

\begin{theorem}\label{Main}
Let $G$ be a finite group and let $N$ be a nilpotent normal subgroup of $G$ of the form $A \times B$ where $A$ is an abelian Hall subgroup of $N$ and $B$ has at most one non-cyclic Sylow subgroup.
If $[C_G(b):C_G(ab)] < n_A$ for every $a\in A$ and $b\in B$ then every torsion element of $\V(\Z G,N)$ is rationally conjugate to an element of $N$.
\end{theorem}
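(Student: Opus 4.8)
The plan is to establish the a priori stronger assertion that $\pa{u}{g}\ge 0$ for every $g\in G$ and every torsion element $u$ of $\V(\Z G,N)$, and then to combine it with the standard machinery. Since the linear extension $\Z G\to\Z(G/N)$ is a ring homomorphism sending $u$ to $1$, every power $u^d$ of a torsion element of $\V(\Z G,N)$ again lies in $\V(\Z G,N)$; so the stronger assertion applied to all such powers gives $\pa{u^d}{g^G}\ge 0$ for all $g\in G$ and all $d$ dividing $|u|$, and the partial augmentation criterion of Marciniak, Ritter, Sehgal and Weiss \cite[Theorem~2.5]{MarciniakRitterSehgalWeiss1987} recalled in the introduction then shows that $u$ is rationally conjugate to an element of $G$. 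Finally, by Theorem~\ref{HertweckPAdic}.(2) the partial augmentations of $u$ vanish off $N$, so that element must in fact lie in $N$. Thus everything reduces to a lower bound on $\pa{u}{n}$ for $n\in N$.

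To get that bound I would fix $n\in N$, let $\pi$ be the set of primes dividing $|A|$, and use that $A$ is the Hall $\pi$-subgroup of $N$ while $B=N_{\pi'}$, so that $n_\pi\in A$, $n_{\pi'}\in B$, and $C_G(n)\le C_G(n_{\pi'})$ (because $n_{\pi'}$ is a power of $n$). Then inequality \eqref{IndexPlusq} of Lemma~\ref{Bound} applies and reads
\[ m_A\,[C_G(n_{\pi'}):C_G(n)] + |A|\,\pa{u}{n}\ \ge\ 0. \]
If $A$ is cyclic then $m_A=0$ and this already gives $\pa{u}{n}\ge 0$. If $A$ is non-cyclic, then $|A|=n_A m_A$, so dividing through yields
\[ \pa{u}{n}\ \ge\ -\,\frac{[C_G(n_{\pi'}):C_G(n)]}{n_A}\ >\ -1, \]
where the strict inequality is exactly the hypothesis $[C_G(n_{\pi'}):C_G(n)]<n_A$ applied with $a=n_\pi$ and $b=n_{\pi'}$. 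Since $\pa{u}{n}\in\Z$, it follows that $\pa{u}{n}\ge 0$.

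I expect this argument to be short, since all the real work is already carried by Lemma~\ref{Bound} (which rests on Corollary~\ref{InequalityCocyclicNAbelian}, the identity \eqref{anSums}, and the lattice computations of Section~\ref{SectionLatticeCalculations}). The points that need care, rather than any genuine obstacle, are: checking that the $A\times B$ decomposition of $N$ is compatible with the $\pi$-part/$\pi'$-part decomposition of an arbitrary $n\in N$; confirming that the degenerate case $A$ cyclic is subsumed (via $m_A=0$, or the convention $n_A=+\infty$, or simply because then $N$ has at most one non-cyclic Sylow subgroup and Theorem~\ref{MatrixProblemTh}.\eqref{Problem2Positive} applies); and noticing that inequality \eqref{IndexPlusq} alone already suffices, so \eqref{SumPositive} and \eqref{Sumq} are not needed here.
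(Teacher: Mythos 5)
Your proposal is correct and follows essentially the same route as the paper: both derive a lower bound on $\pa{u}{n}$ from inequality \eqref{IndexPlusq} of Lemma~\ref{Bound}, use $|A|=n_A m_A$ (or $m_A=0$ in the cyclic case) to turn the hypothesis $[C_G(b):C_G(ab)]<n_A$ into $\pa{u}{n}>-1$, conclude non-negativity by integrality, and finish with the Marciniak--Ritter--Sehgal--Weiss criterion applied to all powers of $u$ together with Theorem~\ref{HertweckPAdic}.(2) to land the conjugating element inside $N$. The only cosmetic difference is that you make explicit the step ``powers of $u$ remain in $\V(\Z G,N)$'' which the paper leaves implicit.
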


\begin{proof}
Let $u$ be a torsion element of $\V(\Z G, N)$ and let $g\in G$.
If $g\not\in N$ then $\pa{u}{g}=0$, by Theorem~\ref{HertweckPAdic}.
Otherwise $g=ab$ with $a\in A$ and $b\in B$. If $m_A=0$ then $\pa{u}{g}\ge 0$ by \eqref{IndexPlusq}. Otherwise, $[C_G(b):C_G(g)]< n_A=\frac{|A|}{m_A}$, by assumption.
By \eqref{IndexPlusq} we then obtain
\[\varepsilon_g(u) \geq -\frac{m_A[C_G(b):C_G(g)]}{|A|} > -1.\]
This shows that every partial augmentation of every element in $\V(\Z G,N)$ is non-negative and hence every element of $\V(\Z G,N)$ is rationally conjugate to an element $g$ of $G$. In particular, $\pa{u}{g}\ne 0$ and hence $g\in N$.
\end{proof}

If $A$ is cyclic then the hypothesis of Theorem~\ref{Main} holds trivially, as $n_A=+\infty$.
In this case we obtain at once the following corollary which is also a consequence of the sufficient part of \cite[Theorem 6.3]{CliffWeiss} and
Theorem~\ref{HertweckPAdic}.

\begin{corollary}\label{AtMostOnceNonCyclic}
Let $N$ be a nilpotent normal subgroup of a finite group $G$ such that $N$ has at most one non-cyclic Sylow subgroup.
Then every torsion element of $\V(\Z G,N)$ is rationally conjugate to an element of $N$.
\end{corollary}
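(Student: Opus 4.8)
The plan is to obtain this as the degenerate case of Theorem~\ref{Main} in which the abelian Hall subgroup is taken to be cyclic, so that the centralizer-index hypothesis there becomes vacuous. Since $N$ is nilpotent with at most one non-cyclic Sylow subgroup, I would first fix a prime $p$ such that every Sylow subgroup of $N$ other than $N_p$ is cyclic (when $N$ is itself cyclic the choice of $p$ is irrelevant, or one may instead take $A=N$ and $B=1$). Then $N_{p'}=\prod_{q\ne p}N_q$ is a direct product of cyclic groups of pairwise coprime orders, hence cyclic, so writing $A=N_{p'}$ and $B=N_p$ exhibits $N=A\times B$ with $A$ an abelian Hall subgroup of $N$ and $B=N_p$ having (trivially) at most one non-cyclic Sylow subgroup.

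Next I would invoke the convention recorded just before Theorem~\ref{Main}: since $A$ is cyclic we have $n_A=+\infty$. Hence the inequality $[C_G(b):C_G(ab)]<n_A$ required by Theorem~\ref{Main} holds for all $a\in A$ and $b\in B$ with no further work, and Theorem~\ref{Main} then gives at once that every torsion element of $\V(\Z G,N)$ is rationally conjugate to an element of $N$, which is the assertion.

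I do not expect any genuine obstacle here: everything of substance is already packaged into Theorem~\ref{Main}, and the only points that need a word are the elementary fact that a direct product of cyclic groups of pairwise coprime orders is cyclic (so that $A=N_{p'}$ legitimately plays the role of the abelian Hall subgroup) together with the triviality of the centralizer condition once $A$ is cyclic. As an alternative that bypasses Sections~\ref{SectionInequalities}--\ref{SectionApplications} entirely, one may argue from part~(1)(c) of Theorem~\ref{MatrixProblemTh}: the Matrix Zassenhaus Problem has a positive solution for $N$ and every $k$, so the matrix strategy of Subsection~\ref{SubsectionMatrixStrategy}, combined with Theorem~\ref{HertweckPAdic} (which forces $\pa{u}{g^G}=0$ for $g\in G\setminus N$), yields a positive solution of Sehgal's Problem for $G$ and $N$.
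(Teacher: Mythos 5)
Your argument is correct and matches the paper's proof exactly: the paper derives the corollary by taking $A$ cyclic in Theorem~\ref{Main}, so that $n_A=+\infty$ and the centralizer-index hypothesis is vacuous, and the paper likewise notes the alternative derivation from \cite[Theorem~6.3]{CliffWeiss} together with Theorem~\ref{HertweckPAdic}. The only item you spell out that the paper leaves implicit is the choice of decomposition $A=N_{p'}$, $B=N_p$ with $N_p$ the possibly non-cyclic Sylow subgroup, which is indeed the intended reading.
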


Unfortunately, for $A$ non-cyclic, the expression of $n_A$ in Theorem~\ref{Main} is quite technical. More friendly bounds can be obtained in special situations. By Corollary~\ref{AtMostOnceNonCyclic} the next cases of interest are when either $A$ is a non-cyclic $p$-group or when every Sylow subgroup of $A$ is generated by at most two elements. The following corollaries give bounds in these situations.

\begin{corollary}\label{OnePrime}
Let $N$ be a nilpotent normal subgroup of $G$ of the form $A \times B$ where $A$ is an abelian Sylow $p$-subgroup of $N$ and $B$ has at most one non-cyclic Sylow subgroup. If $[C_G(b) : C_G(ab)] < p$ for every $a\in A$ and $b\in B$ then every torsion element of $\V(\Z G,N)$ is rationally conjugate to an element of $N$.
\end{corollary}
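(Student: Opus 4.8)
The plan is to deduce Corollary~\ref{OnePrime} directly from Theorem~\ref{Main}. First I would note that a Sylow $p$-subgroup of a nilpotent group is automatically a Hall subgroup, so $A$ is an abelian Hall subgroup of $N$ and $B=N_{p'}$ is the Hall $p'$-subgroup; thus $G$ and $N=A\times B$ fall under the hypotheses of Theorem~\ref{Main}, and it suffices to verify that $[C_G(b):C_G(ab)]<n_A$ for all $a\in A$ and $b\in B$. Since $[C_G(b):C_G(ab)]$ is a positive integer which by assumption is $<p$, it is at most $p-1$, so everything reduces to checking the single inequality $n_A>p-1$.

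If $A$ is cyclic this is immediate, because $n_A=+\infty$ by convention (equivalently, $N$ then has at most one non-cyclic Sylow subgroup, so one may invoke Corollary~\ref{AtMostOnceNonCyclic} directly). If $A$ is non-cyclic, then $\pi=\{p\}$ has odd cardinality, so $n_A=n_A^-=|A|/m_A^-$, and I would read off $m_A^-$ from Lemma~\ref{msGeneral}: with $\pi^-=\{\{p\}\}$ it gives $m_A^- = \frac{|A|}{(p-1)p^{k_p-1}}\,(p^{k_p-1}-1)$, where $k_p\ge 2$ is the number of non-trivial cyclic direct factors of $A$. Hence
\[ n_A = \frac{(p-1)p^{k_p-1}}{p^{k_p-1}-1} = (p-1)\cdot\frac{p^{k_p-1}}{p^{k_p-1}-1} > p-1, \]
since $p^{k_p-1}-1\ge 1$, and Theorem~\ref{Main} applies to finish the proof.

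I do not expect a genuine obstacle: the only substantive point is the elementary estimate $n_A>p-1$, which is read straight off the closed formula for $m_A^-$ in Lemma~\ref{msGeneral}. The one thing to be careful about is strictness: the hypothesis $[C_G(b):C_G(ab)]<p$ is used via $[C_G(b):C_G(ab)]\le p-1$ together with the \emph{strict} inequality $n_A>p-1$, and this combination is exactly what Theorem~\ref{Main} requires. It is worth recording that $p$ cannot literally be replaced by $n_A$ in the statement, since $n_A=p$ only when $k_p=2$ while for $k_p\ge 3$ the value $n_A$ lies strictly between $p-1$ and $p$; in integer terms, however, the bound $p$ is equivalent to the sharp bound $n_A$ coming from Theorem~\ref{Main}, which is why the cleaner constant $p$ loses nothing.
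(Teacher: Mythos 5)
Your argument is correct and takes essentially the same route as the paper's proof: reduce to Theorem~\ref{Main} after observing that $A$ is an abelian Hall subgroup, and check via Lemma~\ref{msGeneral} that $n_A = n_A^- = \frac{(p-1)p^{k-1}}{p^{k-1}-1} > p-1$ when $A$ is non-cyclic with $\Soc(A)\cong C_p^k$ (the cyclic case being handled by Corollary~\ref{AtMostOnceNonCyclic}). Your side remark that $n_A=p$ exactly when $k=2$ is accurate and in fact catches a harmless slip in the paper's displayed chain of inequalities, which asserts $n_A<p$; since only the lower bound $p-1<n_A$ is used, neither proof is affected.
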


\begin{proof}
Suppose that $\Soc(A)=C_p^k$. If $k=1$ then the result follows from Corollary~\ref{AtMostOnceNonCyclic}. Otherwise
	$$p-1 < p-1+\frac{p-1}{p^{k-1}-1}  = n_A=n^-_A=\frac{p^{k-1}(p-1)}{p^{k-1}-1} < p.$$
Therefore, the hypothesis of the corollary is equivalent to the hypothesis of Theorem~\ref{Main}.
\end{proof}

\begin{corollary}\label{SimplifiedBound}
Let $N$ be a nilpotent normal subgroup of $G$ of the form $A \times B$ where $A$ is an abelian Hall subgroup of $N$ and $B$ has at most one non-cyclic Sylow subgroup. Assume moreover that any Sylow subgroup of $A$ is generated by at most two elements.
Let $p$ be the smallest prime dividing $|A|$ and let $k$ be the number of primes dividing $|A|$. If $[C_G(b) : C_G(ab)] < \frac{p}{k}$ for every $a\in A$ and $b\in B$ then every torsion element of $\V(\Z G,N)$ is rationally conjugate to an element of $N$.
\end{corollary}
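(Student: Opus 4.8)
The plan is to reduce the statement to Theorem~\ref{Main}. If $A$ is cyclic, then every Sylow subgroup of $N=A\times B$ is a Sylow subgroup of $A$ or of $B$, so $N$ has at most one non-cyclic Sylow subgroup and Corollary~\ref{AtMostOnceNonCyclic} already applies; hence I may assume $A$ is non-cyclic, so that $A_q$ has rank exactly $2$ for at least one prime $q$. Applying Theorem~\ref{Main}, it then suffices to check that $[C_G(b):C_G(ab)]<n_A$ for all $a\in A$ and $b\in B$. Taking $a=b=1$ in the hypothesis forces $1<p/k$, that is $k<p$; so it is enough to prove the numerical inequality $n_A\ge p/k$.

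I would first pass from $n_A$ to $n_A^-$: since $A$ is non-cyclic we have $|A|=n_A^- m_A^-=n_A m_A$ with $0<m_A\le m_A^-$, whence $n_A=|A|/m_A\ge|A|/m_A^-=n_A^-$. Next, using that every Sylow subgroup of $A$ has rank at most $2$, i.e.\ $k_q\in\{1,2\}$ for every prime $q$ dividing $|A|$, I would simplify $n_A^-=|A|/m_A^-$ by feeding the formula of Lemma~\ref{msGeneral} into it. Writing $\pi_2=\{q\mid|A| : A_q \text{ is non-cyclic}\}$, which is non-empty, all summands of $m_A^-$ indexed by a set $X\not\subseteq\pi_2$ vanish, and after cancelling the factors contributed by the cyclic Sylow subgroups of $A$ an elementary inclusion--exclusion computation leaves
\[
n_A^-=\frac{2\prod_{q\in\pi_2}q}{\prod_{q\in\pi_2}(q+2)-\prod_{q\in\pi_2}q}=\frac{2}{\prod_{q\in\pi_2}(1+2/q)-1}.
\]

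It remains to bound the right-hand side from below. List $\pi_2=\{q_1<q_2<\dots<q_t\}$. Each $q_i$ divides $|A|$, so $q_i\ge p$, and since $k<p$ we get $t\le k\le p-1$. If $p$ is odd then all $q_i$ are odd, so consecutive ones differ by at least $2$ and thus $q_i\ge p+2(i-1)$ for every $i$; if $p=2$ then $t=1$ and $q_1=2=p$, so the same inequality holds. Since $x\mapsto 1+2/x$ is decreasing on $x>0$, this produces a telescoping estimate
\[
\prod_{q\in\pi_2}\Bigl(1+\tfrac{2}{q}\Bigr)\le\prod_{i=1}^{t}\frac{p+2i}{p+2(i-1)}=\frac{p+2t}{p},
\]
so that $\prod_{q\in\pi_2}(1+2/q)-1\le 2t/p$ and hence $n_A^-\ge\frac{2}{2t/p}=\frac{p}{t}\ge\frac{p}{k}$. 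Together with $n_A\ge n_A^-$ this gives $n_A\ge p/k$, and Theorem~\ref{Main} completes the argument. The steps I expect to require the most care are the simplification of $m_A^-$ under the rank-$\le2$ hypothesis (pure bookkeeping with the identities of Section~\ref{SectionLatticeCalculations}) and the legitimacy of the crude prime-gap bound $q_i\ge p+2(i-1)$, which would fail for $p=2$ with $t\ge2$ but is harmless here precisely because $k<p$ forces $t=1$ in that case.
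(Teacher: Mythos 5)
Your proof is correct. The overall strategy is the same as the paper's: reduce to Theorem~\ref{Main} by showing $n_A \ge p/k$, and use the observation that $k<p$ (hence $|A|$ odd when $k\ge 2$) to control prime gaps. But the numerical verification is organized differently. The paper dispatches $k=1$ via Corollary~\ref{OnePrime}, deduces $|A|$ is odd, and then invokes Lemma~\ref{MinimalPrimenA}, whose proof factors out the cyclic Sylow subgroups, bounds each factor $\frac{(p_i+1)^{\Delta_{\Z_k\setminus X}(i)}}{p_i}$ by its worst case $\frac{(p_0+2i+1)^{\Delta_{\Z_k\setminus X}(i)}}{p_0+2i}$, and then evaluates the resulting sum via the inductive identity of Lemma~\ref{NumericalLemma}. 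You instead handle cyclic $A$ via Corollary~\ref{AtMostOnceNonCyclic}, collapse the formula of Lemma~\ref{msGeneral} in the rank $\le 2$ case to the closed form
\[
n_A^- \;=\; \frac{2}{\prod_{q\in\pi_2}\bigl(1+\tfrac{2}{q}\bigr)-1}
\]
by an inclusion--exclusion, and then get the lower bound $p/t$ from a direct telescoping of $\prod_{i}\frac{p+2i}{p+2(i-1)}$. This is self-contained and avoids both auxiliary Lemmas~\ref{NumericalLemma} and \ref{MinimalPrimenA}, at the cost of doing the inclusion--exclusion by hand; both arguments ultimately rest on the same odd-prime-gap estimate $q_i \ge p+2(i-1)$, which you correctly observe needs $k<p$ only to rule out the $p=2$, $t\ge 2$ case (which the paper rules out the same way by showing $|A|$ odd).
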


Before proving Corollary~\ref{SimplifiedBound} we need two more technical lemma.
For a positive integer $k$ set $\Z_k=\{0,1,\dots,k-1\}$.
We now use characteristic functions of subsets of $\Z_k$.

\begin{lemma}\label{NumericalLemma}
If $x$ is a positive real number and $k$ is a positive integer then $$\sum_{X\in \Z_k^-} \prod_{i=0}^{k-1} (x+2i+1)^{\Delta_{\Z_k\setminus X}(i)} = k
\prod_{i=1}^{k-1} (x+2i).$$
\end{lemma}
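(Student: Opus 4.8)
The statement is a polynomial identity in the real variable $x$ for each fixed positive integer $k$, so the natural approach is induction on $k$. First I would unpack the left-hand side: the sum runs over subsets $X$ of $\Z_k=\{0,1,\dots,k-1\}$ of odd cardinality, and for each such $X$ the product picks up a factor $(x+2i+1)$ for each $i\notin X$ and a factor $1$ for each $i\in X$. Equivalently, writing $y_i = x+2i+1$, the LHS equals $\sum_{X\in \Z_k^-}\prod_{i\notin X} y_i$, which is the ``odd part'' of $\prod_{i=0}^{k-1}(1+y_i)$ under the standard sign bookkeeping; more precisely, if one sets $e(t)=\prod_{i=0}^{k-1}(1+t y_i)$, then $\sum_{X\in\Z_k^+}\prod_{i\notin X}y_i$ and $\sum_{X\in\Z_k^-}\prod_{i\notin X}y_i$ are obtained as $\frac12\big(\prod(1+y_i)\pm\prod(1-y_i)\big)$ after the substitution that tracks $|X|$ by a sign. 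So I would introduce the two quantities
\[
S^+_k(x)=\sum_{X\in \Z_k^+}\prod_{i=0}^{k-1}(x+2i+1)^{\Delta_{\Z_k\setminus X}(i)},\qquad
S^-_k(x)=\sum_{X\in \Z_k^-}\prod_{i=0}^{k-1}(x+2i+1)^{\Delta_{\Z_k\setminus X}(i)},
\]
and observe the closed forms
\[
S^+_k(x)=\tfrac12\Big(\prod_{i=0}^{k-1}(x+2i+2)+\prod_{i=0}^{k-1}(-x-2i)\Big),\qquad
S^-_k(x)=\tfrac12\Big(\prod_{i=0}^{k-1}(x+2i+2)-\prod_{i=0}^{k-1}(-x-2i)\Big),
\]
where I have used $1+(x+2i+1)=x+2i+2$ and $1-(x+2i+1)=-x-2i$.

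Granting those closed forms, the identity to prove is simply
\[
\tfrac12\Big(\prod_{i=0}^{k-1}(x+2i+2)-\prod_{i=0}^{k-1}(-x-2i)\Big)=k\prod_{i=1}^{k-1}(x+2i).
\]
Now $\prod_{i=0}^{k-1}(x+2i+2)=\prod_{j=1}^{k}(x+2j)=(x+2k)\prod_{i=1}^{k-1}(x+2i)$ and $\prod_{i=0}^{k-1}(-x-2i)=(-1)^k\,x\prod_{i=1}^{k-1}(x+2i)$. Factoring out $\prod_{i=1}^{k-1}(x+2i)$, the claim reduces to
\[
\tfrac12\big((x+2k)-(-1)^k x\big)=k,
\]
which holds when $k$ is even (the $x$-terms cancel, leaving $\tfrac12\cdot 2k=k$) but fails when $k$ is odd (one gets $x+k$, not $k$). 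So the naive closed-form computation shows that something in my indexing of the $\Delta$-convention is off: I expect the exponent convention in the paper is that $\Delta_{\Z_k\setminus X}(i)=1$ precisely for $i\notin X$, so the correct closed form must instead be $S^-_k(x)=\tfrac12\big(\prod_{i=0}^{k-1}(x+2i+2)+(-1)^{k-1}\prod_{i=0}^{k-1}(-x-2i)\big)$ — i.e. the sign on the second product is $(-1)^{k-1}$, not $-1$, because the empty set is even and contributes the full product $\prod(x+2i+1)$ rather than a pure monomial. The safe way to get this bookkeeping exactly right is to redo it cleanly: set $P(t)=\prod_{i=0}^{k-1}\big(t+(x+2i+1)(1-t)\big)$ evaluated appropriately, or better, just directly verify $\sum_{X\subseteq\Z_k}(-1)^{|X|}\prod_{i\notin X}y_i = \prod_{i}(y_i-1)=\prod_i(x+2i)$ and $\sum_X\prod_{i\notin X}y_i=\prod_i(y_i+1)=\prod_i(x+2i+2)$, then take half the difference to isolate the odd-cardinality part. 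That gives
\[
S^-_k(x)=\tfrac12\Big(\prod_{i=0}^{k-1}(x+2i+2)-\prod_{i=0}^{k-1}(x+2i)\Big).
\]

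With this corrected identity the computation closes immediately: $\prod_{i=0}^{k-1}(x+2i+2)=(x+2k)\prod_{i=1}^{k-1}(x+2i)$ and $\prod_{i=0}^{k-1}(x+2i)=x\prod_{i=1}^{k-1}(x+2i)$, so
\[
S^-_k(x)=\tfrac12\big((x+2k)-x\big)\prod_{i=1}^{k-1}(x+2i)=k\prod_{i=1}^{k-1}(x+2i),
\]
which is exactly the claim. So my plan is: (1) translate the $\Delta$-indexed sum into the elementary-symmetric-function language $\sum_{X\in\Z_k^-}\prod_{i\notin X}(x+2i+1)$; (2) prove the two auxiliary identities $\sum_{X\subseteq\Z_k}\prod_{i\notin X}(x+2i+1)=\prod_{i=0}^{k-1}(x+2i+2)$ and $\sum_{X\subseteq\Z_k}(-1)^{|X|}\prod_{i\notin X}(x+2i+1)=\prod_{i=0}^{k-1}(x+2i)$, each by a one-line expansion of the product over $i$ of $(1+y_i)$ and $(y_i-1)$ respectively; (3) subtract and halve. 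The only genuine obstacle is the sign/cardinality bookkeeping in step (1)–(2): one must be careful that the empty subset is counted as even and contributes the full product, so that the odd-part is $\tfrac12(\text{sum}-\text{signed sum})$ rather than the other way around; once that is pinned down, steps (2) and (3) are routine. An induction on $k$ (splitting the sum over whether $k-1\in X$) would be an alternative to step (2) but is not shorter. I would present the generating-function version since it is cleanest.
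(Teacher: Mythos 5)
Your final version of the argument is correct, and it takes a genuinely different route from the paper. The paper proves the lemma by a two-stage induction on $k$: writing $A_k^{\pm}$ for the sums over subsets of even/odd cardinality, it first establishes the auxiliary relation $kA_k^+=(x+k)A_k^-$ inductively via the recursions $A_{k+1}^+=A_k^+(x+2k+1)+A_k^-$ and $A_{k+1}^-=A_k^-(x+2k+1)+A_k^+$ (obtained by splitting on whether $k\in X$), and then runs a second induction, feeding off the first, to conclude $A_k^-=k\prod_{i=1}^{k-1}(x+2i)$. Your generating-function identity replaces both inductions by a single expansion: with $y_i=x+2i+1$ one has $\sum_{X\subseteq\Z_k}t^{|X|}\prod_{i\notin X}y_i=\prod_{i=0}^{k-1}(y_i+t)$, so evaluating at $t=\pm 1$ and taking half the difference isolates the odd-cardinality part as
\[
A_k^- \;=\; \tfrac12\Bigl(\prod_{i=0}^{k-1}(x+2i+2)-\prod_{i=0}^{k-1}(x+2i)\Bigr)
\;=\;\tfrac12\bigl((x+2k)-x\bigr)\prod_{i=1}^{k-1}(x+2i)\;=\;k\prod_{i=1}^{k-1}(x+2i),
\]
which is shorter, avoids the coupled recursion entirely, and as a byproduct also gives $A_k^+=(x+k)\prod_{i=1}^{k-1}(x+2i)$ (hence the paper's auxiliary relation) for free. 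Your first pass had the sign bookkeeping in the signed sum slightly off: you used $\prod_i(1-y_i)$ in place of $\prod_i(y_i-1)$, which differ by $(-1)^k$, and this produced the parity anomaly you noticed. You diagnosed and corrected it yourself; the corrected computation is sound, and in a written proof you would simply present the correct version (your steps (1)--(3)) without the false start.
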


\begin{proof}
Let $A_k^± =\sum_{X\in \Z_k^\pm} \prod_{i=0}^{k-1} (x+2i+1)^{\Delta_{\Z_k\setminus X}(i)}$.
We first prove by induction on $k$ that
\begin{equation}\label{An+-}
kA_k^+=(x+k)A_k^-.
\end{equation}
The case $k=1$ is easy because $A_1^+=x+1$ and $A_1^-=1$, as by convention an empty product equals 1.
For the induction step we will use the following recursive formulae:
$$A_{k+1}^+ = A_k^+(x+2k+1)+A_k^- \qand A_{k+1}^- = A_k^-(x+2k+1)+A_k^+$$
Thus, if we assume that $kA_k^+=(x+k)A_k^-$ then we have
$$\frac{A_{k+1}^+}{A_{k+1}^-} = \frac{A_k^+(x+2k+1)+A_k^-}{A_k^-(x+2k+1)+A_k^+}
= \frac{(x+k)(x+2k+1)+k}{k(x+2k+1)+(x+k)} = \frac{x+k+1}{k+1}.$$
This finishes the proof of \eqref{An+-}.

Now we define $B_k=\prod_{i=1}^{k-1} (x+2i)$. We have to show that $A_k^-=kB_k$ and again we argue by induction on $k$. The case $k=1$ is trivial because $A_1^-=B_1=1$.
Hence we assume $A_k^-=kB_k$ and, using \eqref{An+-} and the recursive formulae for $A_k^±$, we have
    \begin{eqnarray*}
    A_{k+1}^- &=& A_k^-(x+2k+1)+A_k^+ = A_k^-\left(x+2k+1+\frac{x+k}{k}\right) \\
    &=& B_k(k(x+2k+1)+x+k) = (k+1)B_k(x+2k) = (k+1)B_{k+1}.
    \end{eqnarray*}
This finishes the proof of the lemma.
\end{proof}

\begin{lemma}\label{MinimalPrimenA}
Let $A$ be finite abelian group of odd order such that each Sylow subgroup of $A$ is the product of two non-trivial cyclic subgroups. Let $k$ be the number of non-cyclic Sylow subgroups of $A$ and let $p_0$ be the smallest prime with $A_{p_0}$ non-cyclic. Then $\frac{p_0}{k} \leq n_A$.
\end{lemma}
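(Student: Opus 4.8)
The plan is to turn the inequality $p_0/k \le n_A$ into an elementary comparison of products over the set $\pi$ of primes dividing $|A|$, and then settle that comparison by a telescoping argument. First I would normalise the hypotheses: since every Sylow subgroup of $A$ is a product of two non-trivial cyclic groups, $k_p = 2$ for every $p \in \pi$, so $A$ is non-cyclic, $|\pi| = k$, and $p_0 = \min \pi$. Substituting $k_p = 2$ in \eqref{AlphaA} gives $\alpha_A = \prod_{p\in\pi} p\,\frac{1-p}{p-1} = (-1)^k \prod_{p\in\pi} p$, so $\alpha_A > 0$ when $k$ is even and $\alpha_A < 0$ when $k$ is odd.

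Next I would compute a closed form for $n_A^-$. In the formula for $n_A^-$ displayed just before Theorem~\ref{Main} put $k_p = 2$: the numerator is $\prod_{p\in\pi}(p-1)p$, while in a summand of the denominator $p^{2-\Delta_X(p)}-1$ equals $p-1$ if $p \in X$ and $(p-1)(p+1)$ if $p\notin X$, so the denominator is $\prod_{p\in\pi}(p-1)\cdot\sum_{X\in\pi^-}\prod_{p\notin X}(p+1)$. The elementary identity $\sum_{X\in\pi^-}\prod_{p\notin X}c_p = \tfrac12\bigl(\prod_{p\in\pi}(c_p+1)-\prod_{p\in\pi}(c_p-1)\bigr)$ (expand $\prod_p(c_p+t)$ and specialise at $t=\pm1$), applied with $c_p = p+1$, turns the inner sum into $\tfrac12\bigl(\prod_{p\in\pi}(p+2)-\prod_{p\in\pi}p\bigr)$; cancelling $\prod_{p\in\pi}(p-1)$ and dividing through by $\prod_{p\in\pi}p$ yields
\[ n_A^- = \frac{2}{\prod_{p\in\pi}\frac{p+2}{p}-1}. \]

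Now I would reduce to proving $n_A^- \ge p_0/k$. When $k$ is odd this is all that is needed, since $n_A = n_A^-$. When $k$ is even, $n_A = \frac{|A| n_A^-}{|A|-n_A^-\alpha_A}$ with $|A|-n_A^-\alpha_A \in (0,|A|)$ — positive because $A$ is non-cyclic (this quantity equals $|A| m_A/m_A^-$ with $m_A, m_A^- > 0$) and $<|A|$ because $\alpha_A, n_A^- > 0$ — so again $n_A > n_A^-$. By the closed form, $n_A^- \ge p_0/k$ is equivalent to $\prod_{p\in\pi}\frac{p+2}{p} \le 1+\frac{2k}{p_0} = \frac{p_0+2k}{p_0}$. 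To prove this, list $\pi = \{q_1 < q_2 < \dots < q_k\}$; these are distinct odd integers with $q_1 = p_0$, hence $q_i \ge p_0 + 2(i-1)$, so $1+\frac{2}{q_i} \le 1+\frac{2}{p_0+2(i-1)} = \frac{p_0+2i}{p_0+2(i-1)}$, and multiplying over $i$ the right-hand side telescopes:
\[ \prod_{p\in\pi}\frac{p+2}{p} = \prod_{i=1}^k\Bigl(1+\frac{2}{q_i}\Bigr) \le \prod_{i=1}^k\frac{p_0+2i}{p_0+2(i-1)} = \frac{p_0+2k}{p_0}, \]
which is exactly what was required. (Equality occurs precisely when $\pi$ consists of the $k$ smallest odd numbers that are $\ge p_0$; in that extremal configuration Lemma~\ref{NumericalLemma} gives an alternative evaluation of $n_A^-$.)

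I do not expect a genuine obstacle. The only points needing a little care are the subset-sum identity that collapses the denominator of $n_A^-$, and the bookkeeping of the parity of $k$ in the third step — in particular checking that $|A|-n_A^-\alpha_A$ is positive, so that $n_A \ge n_A^-$ is legitimate. Both are routine once the closed form $n_A^- = 2/\bigl(\prod_{p\in\pi}\tfrac{p+2}{p}-1\bigr)$ is in hand.
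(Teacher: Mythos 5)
Your proof is correct and takes a genuinely different route from the paper's. After the (common) observation that $k_p=2$ for all $p\in\pi$ and $n_A\ge n_A^-$, the paper leaves $n_A^-$ as the quotient $\prod_{i}p_i\big/\sum_{X\in\Z_k^-}\prod_{i\notin X}(p_i+1)$, pushes the bound $p_i\ge p_0+2i$ through that sum, and then invokes Lemma~\ref{NumericalLemma} (proved separately by a two-step induction) to evaluate $\sum_{X\in\Z_k^-}\prod_{i\notin X}(p_0+2i+1)$ exactly as $k\prod_{i=1}^{k-1}(p_0+2i)$. You instead first collapse the alternating sum with the elementary identity $\sum_{X\in\pi^-}\prod_{p\notin X}c_p=\tfrac12\bigl(\prod_p(c_p+1)-\prod_p(c_p-1)\bigr)$ to get the clean closed form $n_A^-=2\big/\bigl(\prod_{p\in\pi}\tfrac{p+2}{p}-1\bigr)$, and then reduce the desired inequality to $\prod_{p\in\pi}\tfrac{p+2}{p}\le\tfrac{p_0+2k}{p_0}$, which you establish by the telescoping bound $1+\tfrac{2}{q_i}\le\tfrac{p_0+2i}{p_0+2(i-1)}$. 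This bypasses Lemma~\ref{NumericalLemma} entirely and makes the numerics more transparent; the telescoping product plays exactly the role that the paper delegates to that auxiliary lemma. Your parity bookkeeping for $n_A\ge n_A^-$ (verifying $0<|A|-n_A^-\alpha_A<|A|$ when $k$ is even, via $|A|-n_A^-\alpha_A=|A|\,m_A/m_A^-$ and $m_A,m_A^->0$ for non-cyclic $A$) is correct and is actually more careful than the paper, which asserts $n_A\ge n_A^-$ without comment. One small remark: the paper's opening reduction ``we may assume every Sylow subgroup of $A$ is non-cyclic'' is vacuous under the stated hypothesis that each Sylow subgroup is a product of two non-trivial cyclic groups, as you implicitly note by writing $|\pi|=k$ directly.
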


\begin{proof}
Observe that if $C$ is cyclic of order coprime with $|A|$ then $n_A = n_{A \times C}$.
Hence we may assume without loss of generality that every Sylow subgroup of $A$ is non-cyclic.

Let $p_0<p_1<\dots<p_{k-1}$ be the primes dividing $|A|$, so by assumption $\Soc(A_i)\cong C_{p_i}^2$ for every $i$.
Thus
    \begin{eqnarray*}
n_A \geq n_A^- = \frac{\prod_{i=0}^{k-1} (p_i-1)p_i}{\sum_{X\in \Z_k^-} \prod_{i=0}^{k-1} (p_i^{2-\Delta_X(i)}-1)}
    &=&\frac{\prod_{i=0}^{k-1} p_i}{\sum_{X\in \Z_k^-} \prod_{i=0}^{k-1} (p_i+1)^{\Delta_{\Z_k\setminus X}(i)}}.
    \end{eqnarray*}
If $k=1$ then the latter number is $p_0$ which yields the result in this case.

So suppose $k\ge 2$. Then $p_0+2i\le p_i$ for each $i$.
Moreover, if $0< x\le y$ then $\frac{x}{x+1}\le \frac{y}{y+1}$. Hence, using Lemma~\ref{NumericalLemma} for $x=p_0$, we obtain
    \begin{eqnarray*}
    n_A &\ge& \frac{1}{\sum_{X\in \Z_k^-} \prod_{i=0}^{k-1} \frac{(p_i+1)^{\Delta_{\Z_k\setminus X}(i)}}{p_i}} \ge
    \frac{1}{\sum_{X\in \Z_k^-} \prod_{i=0}^{k-1} \frac{(p_0+2i+1)^{\Delta_{\Z_k\setminus X}(i)}}{p_0+2i}} \\
    &=& p_0 \frac{\prod_{i=1}^{k-1} (p_0+2i)}{\sum_{X\in \Z_k^-} \prod_{i=0}^{k	-1} (p_0+2i+1)^{\Delta_{\Z_k\setminus X}(i)}} = \frac{p_0}{k},
    \end{eqnarray*}
as desired.
\end{proof}

We are ready for the
\medskip

\begin{proofof}\textit{Corollary~\ref{SimplifiedBound}}.
By Corollary~\ref{OnePrime} we may assume that $k\ge 2$. Then the order of $A$ is odd because otherwise $[\Cen_G(b):\Cen_G(ab)] < \frac{2}{k}\le 1$, which is not possible, for $a\in A$ and $b\in B$. Then the corollary follows at once from Theorem~\ref{Main} and Lemma~\ref{MinimalPrimenA}.
\end{proofof}

\subsection{Positive results for the Zassenhaus Conjecture}

We finish by proving the Zassenhaus Conjecture for some groups.
To this end, as well as our results on Sehgal's Problem, we need to obtain results also on units not
lying in $\V(\Z G, N)$. A weaker version of the following lemma appeared in \cite{MarciniakRitterSehgalWeiss1987}.

\begin{lemma}\label{Not1ModN}
Let $N$ be a nilpotent normal subgroup of $G$ such that $[G:N]$ is prime. Then any torsion unit in $\V(\Z G)$ which does not lie in $\V(\Z
G,N)$ is rationally conjugate to an element in $G$.
\end{lemma}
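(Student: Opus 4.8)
The plan is to project $u$ along two different normal subgroups — first $N$, then the Hall $p'$-subgroup of $N$ — so as to reduce to situations where the Zassenhaus Conjecture is already available, and then transfer the information back to $u$.

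Set $p = [G:N]$. First I would push $u$ to $G/N \cong C_p$. Its image $\bar u$ is a torsion unit of augmentation $1$ in $\Z(G/N)$; by Weiss' Theorem \cite{Weiss1991} applied to the nilpotent (indeed abelian) group $G/N$ it is rationally conjugate to a group element, and rational conjugacy in the commutative algebra $\Q(G/N)$ is equality, so $\bar u \in G/N$. Since $u \notin \V(\Z G,N)$ we have $\bar u \neq 1$, hence $\bar u$ generates $G/N$ and $p \mid |u|$. Decomposing $u = u_p u_{p'}$ into commuting $p$- and $p'$-parts, the $p'$-part of $\bar u$ is trivial, so $u_{p'} \in \V(\Z G, N)$ while $\overline{u_p} = \bar u$ still generates $G/N$; in particular there is a $p$-element $g \in G$ with $gN = \bar u$, so $\langle g\rangle N = G$.

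Next I would use that $K := N_{p'}$, the Hall $p'$-subgroup of $N$, is characteristic in $N$, hence normal in $G$, and that $|G/K| = [G:N]\,|N_p|$ is a power of $p$; thus $G/K$ is a finite $p$-group and $K$ is a normal Hall $p'$-subgroup of $G$. Applying Weiss' Theorem \cite{Weiss1991} to the $p$-group $G/K$, the image $\hat u$ of $u$ in $\Z(G/K)$ is rationally conjugate — in fact, by Weiss, $\Z_p$-conjugate — to an element $\hat g$ of $G/K$, which by compatibility with the further quotient $G/K\to G/N$ we may take to be the image of the $p$-element $g$ above. Moreover, the $p'$-part of $\hat u$ is a torsion unit of $p'$-order in the integral group ring of a $p$-group, hence trivial, so in fact $u_{p'} \in \V(\Z G, K)$ and $\hat u = \widehat{u_p}$.

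Finally I would transfer this back to $u$. Two ingredients are available: (i) $\widehat{u_p}$ is $\Z_p$-conjugate to $\widehat{g}$ over the quotient of $G$ by the normal Hall $p'$-subgroup $K$; and (ii) $u_{p'}$ lies in $\V(\Z G,N)$ with $N$ nilpotent and has $p'$-order, so Theorem~\ref{HertweckPAdic} controls its partial augmentations (they are supported on $\ell_G(n)\subseteq N$ for a fixed $n\in N$). The crux is to combine (i) and (ii) to show that $u$ itself is rationally conjugate to an element of $G$ — equivalently, by \cite[Theorem~2.5]{MarciniakRitterSehgalWeiss1987}, that $\pa{u^d}{h^G}\ge 0$ for every $h\in G$ and every divisor $d$ of $|u|$. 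This is where the hypothesis $u\notin\V(\Z G,N)$ must be used essentially: since $gN$ generates $G/N$ the element $\langle g\rangle$ surjects onto $G/N$, and this is what allows one to lift the $\Z_p(G/K)$-conjugacy in (i) across the normal $p'$-subgroup $K$ (whose order is invertible in $\Z_p$, so that $\Z_p K$ is a maximal order and the obstruction to lifting vanishes) to a $\Z_pG$-conjugacy carrying $u_p$ onto $g$, after which $u_{p'}$ is absorbed into $N$ by a conjugation centralizing $g$, realized inside $\Z_p C_G(g)$ using (ii). I expect this last transfer to be the main obstacle — without the condition $u\notin\V(\Z G,N)$ the analogous statement is precisely Sehgal's Problem for prime index, which is open — so the argument has to exploit that $\langle g\rangle$ covers $G/N$, making the lift across $K$ canonical.
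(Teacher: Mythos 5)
Your proposal stalls at exactly the point you flag as ``the main obstacle,'' and the sketch you offer for that step would not work as described. Since $|K|$ is invertible in $\Z_p$, the element $\widehat K=\tfrac{1}{|K|}\sum_{k\in K}k$ is a \emph{central idempotent} of $\Z_p G$, so the natural map $\Z_p G\to\Z_p(G/K)$ is a split surjection and $\Z_p G\cong\Z_p(G/K)\oplus(1-\widehat K)\Z_p G$ as rings. The invertibility of $|K|$ therefore does not ``make the obstruction to lifting vanish''; it replaces a genuine quotient by a direct summand, and a unit $V$ of $\Z_p(G/K)$ conjugating $\widehat{u_p}$ to $\widehat g$ tells you nothing about the $(1-\widehat K)$-components of $u_p$ and $g$. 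No mechanism is given to produce a conjugating unit on that complementary summand, and none is available in general. Beyond that, even if one had $u_p\sim g$ in $\Q G$ and, separately, control of the partial augmentations of $u_{p'}$, this would not yield rational conjugacy of $u$: by \cite[Theorem 2.5]{MarciniakRitterSehgalWeiss1987} one needs $\pa{u^d}{h^G}\ge 0$ \emph{simultaneously} for all $d\mid|u|$ and all $h$, and this does not decompose prime by prime. The closing phrase about realizing a conjugation ``inside $\Z_p C_G(g)$'' is a hope, not an argument.

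For comparison, the paper's proof avoids the quotient by $N_{p'}$ entirely. It inducts on $|N|$, reducing modulo a single $N_q$ with $q\ne p$ so that the index $[G/N_q:N/N_q]=p$ stays prime and the induction hypothesis gives $\pa{\bar u^d}{\bar g}\ge 0$. Theorem~\ref{HertweckPAdic}, applied to $u_q$ (which, unlike $u$, lies in $\V(\Z G,N)$), produces $y\in N_q$ with $u_q$ conjugate to $y$ over $\Z_q G$, so every class with $\pa{u^d}{g}\ne 0$ has $g_q$ conjugate to $y^d$. The hypothesis $u\notin\V(\Z G,N)$ supplies an $x\notin N$ with $\pa{u}{x}\ne 0$; normalizing $x_q=y$ makes $H=\langle N_{q'},x_p\rangle$ a Hall $q'$-subgroup of $G$ centralizing $y$, and then two classes $h_1y^d$, $h_2y^d$ with $h_i\in H$ are $G$-conjugate iff they are conjugate modulo $N_q$, which transfers non-negativity back to $u$. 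Collapsing by all of $N_{p'}$ at once, as you propose, discards exactly the conjugacy information in $G$ that this inductive argument retains.
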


\begin{proof}
We argue by induction on the order of $N$, with the case $N=1$ being trivial.
In particular, we assume that if $q$ is a prime dividing the order of $N$ then any torsion unit of $\V(\Z(G/N_q))$ which is not in $\V(\Z(G/N_q),N/N_q)$ is rationally conjugate to an element of $G/N_q$.
Let $p = [G:N]$ and let $u \in \V(\Z G)$ be a torsion unit such that $u \notin \V(\Z G, N)$.
This implies that $p$ divides the order of $u$.
Note that if $N$ is a $p$-group, so is $G$ and the Zassenhaus Conjecture holds by \cite{Weiss1988}.
Let $q$ be a prime dividing $|G|$ different from $p$.
Use bar notation for reduction modulo $N_q$ and its $\Z$-linear extension.
By the induction hypothesis $\overline{u}$ is rationally conjugate to an element of
$\overline{G}$ and hence the partial augmentations of the powers of $\overline{u}$ are non-negative.

If $q$ is not a divisor of the order of $u$ then for every $d\mid n$ the partial augmentations of $\overline{u}^d$ and $u^d$ coincides and hence $u$ is rationally conjugate to an element of $G$.
Hence we may assume that $q$ divides the order of $u$.
By Theorem~\ref{HertweckPAdic}, there exists a non-trivial element $y \in N_q$ such that $y$ is conjugate in $\Z_p G$ to $u_q$.
Then, by \cite[Lemma~2.2]{Hertweck2008}, if $\pa{u^d}{g} \neq 0$ then $y^d$ is conjugate in $G$ to $g_q$.
Since the image of $u$ in $\V(\Z(G/N))$ is non-trivial there exists an element $x \in G$ such that $\pa{u}{x} \neq 0$ and the image of $x$ in $G/N$ is not the identity.
This implies, in particular, that $G = \langle N, x_p \rangle$.
Moreover, by the above, we can assume $x_q = y$, so $x_p \in C_G(y)$.
Let $H = \langle N_{q'},x_p \rangle$. Then $G = HN_q$ and $H \leq C_G(y)$.
We will show that whenever $g_1$ and $g_2$ are non-conjugate elements in $G$ which satisfy $\pa{u^d}{g_1}
\neq 0$ and  $\pa{u^d}{g_2} \neq 0$ then $\overline{g_1}$ and $\overline{g_2}$ are also not conjugate. This will imply that $\pa{u^d}{g}=\pa{\overline{u}^d}{\overline{g}}\ge 0$ for every $g\in G$ and hence $u$ is rationally conjugate to an element of $G$.

So let $g_1$ and $g_2$ be non-conjugate elements in $G$ such that $\pa{u^d}{g_1} \neq 0$ and $\pa{u^d}{g_2} \neq 0$.
By the above, we may assume $g_1 = h_1y^d$ and $g_2 = h_2y^d$ with $h_1, h_2 \in H$.
Assume that $\overline{g_1}$ and $\overline{g_2}$ are conjugate in $G/N_q$.
So there exists also a $z \in H$ such that $h_1^z = h_2$. Then $g_1^z=(h_1y^d)^z = h_2y^d=g_2$,
since $H \leq C_G(y)$. So $g_1$ and $g_2$ are conjugate, contradicting our assumption.
\end{proof}

Combining this lemma with Corollary~\ref{AtMostOnceNonCyclic} we immediately obtain Theorem~\ref{TheoremZC1}.
%Similarly, using our results on Sehgal's Problem we get the following.
Finally we also prove Theorem~\ref{TheoremZC2}
\medskip

\begin{proofof}\textit{Theorem~\ref{TheoremZC2}}.
Let $u$ be a torsion element of $\V(\Z G)$. If $u$ does not lie in $\V(\Z G, N)$ then it is rationally conjugate to an element in $G$ by Lemma~\ref{Not1ModN}. So suppose $u \in \V(\Z G,N)$. If $N$ has at most one non-cyclic Sylow subgroup then $u$ is rationally conjugate to an element in $N$ by Corollary~\ref{AtMostOnceNonCyclic}. Otherwise we can assume that $A$ is an abelian Sylow $p$-subgroup of $N$. We then have $[C_G(b):C_G(ba)] \leq [G:N] < p$ for every $b \in B$ and every $a \in A$. So $u$ is rationally conjugate to an element in $N$ by Corollary~\ref{OnePrime}.
\end{proofof}

\bibliographystyle{amsalpha}
\bibliography{CW}

\noindent
Departamento de Matemáticas, Universidad de Murcia, 30100 Murcia, Spain\newline
email: leo.margolis@um.es, adelrio@um.es

\end{document}